\newtheorem{Def}{Definition}[section]
\newtheorem{Thm}[Def]{Theorem}
\newtheorem{Prop}[Def]{Proposition}
\newtheorem{Lem}[Def]{Lemma}
\newtheorem{Rem}[Def]{Remark}
\newtheorem{Ex}[Def]{Example}
\newtheorem*{keylem}{Key Lemma}
\newcommand{\C}{\mathbb{C}}
\newcommand{\R}{\mathbb{R}}
\newcommand{\Z}{\mathbb{Z}}
\newcommand{\Q}{\mathbb{Q}}
\newcommand{\PP}{\mathbb{P}}
\newcommand{\UU}{\mathrm{U}}
\newcommand{\SU}{\mathop{\mathrm{SU}}\nolimits}
\newcommand{\SL}{\mathop{\mathrm{SL}}\nolimits}
\newcommand{\GL}{\mathop{\mathrm{GL}}\nolimits}
\newcommand{\PGL}{\mathop{\mathrm{PGL}}\nolimits}
\newcommand{\id}{\mathop{\mathrm{id}}\nolimits}
\newcommand{\disc}{\mathop{\mathrm{disc}}\nolimits}
\newcommand{\Ker}{\mathop{\mathrm{Ker}}\nolimits}
\newcommand{\rk}{\mathop{\mathrm{rank}}\nolimits}
\newcommand{\Aut}{\mathop{\mathrm{Aut}}\nolimits}
\newcommand{\HF}{1/2}
\newcommand{\ord}{\operatorname{ord}}
\newcommand{\rank}{\operatorname{rank}}
\newcommand{\sign}{\operatorname{sign}}
\newcommand{\LF}[1]{\langle #1 \rangle}
\newcommand{\QMat}[4]{\begin{bmatrix}#1&#2\\#3&#4\end{bmatrix}}
\newcommand{\invpoly}{s_1^i t_1^{2-i} s_2^j t_2^{2-j} s_3^k t_3^{2-k}  + s_1^{2-i} t_1^{i} s_2^{2-j} t_2^{j} s_3^{2-k} t_3^{k}}
\begin{document}
\title{Calabi--Yau Threefolds of Type K (I): Classification}
\author{Kenji Hashimoto \ \ \ Atsushi Kanazawa}
\date{}

\maketitle

\begin{abstract}
Any Calabi--Yau threefold $X$ with infinite fundamental group admits an \'{e}tale Galois covering either by an abelian threefold or by the product of a K3 surface and an elliptic curve.   
We call $X$ of type A in the former case and of type K in the latter case. 
In this paper, we provide the full classification of Calabi--Yau threefolds of type K, based on Oguiso and Sakurai's work \cite{OS}.  
Together with a refinement of their result on Calabi--Yau threefolds of type A, we finally complete the classification of Calabi--Yau threefolds with infinite fundamental group. 
\end{abstract}




\section{Introduction}
The present paper is concerned with the Calabi--Yau threefolds with infinite fundamental group.  
Throughout the paper, a Calabi--Yau threefold is a smooth complex projective threefold $X$ with trivial canonical bundle
 and $H^{1}(X, \mathcal{O}_{X})=0$. 
Let $X$ be a Calabi--Yau threefold with infinite fundamental group.  
Then the Bogomolov decomposition theorem \cite{Be1} implies that $X$ admits an \'{e}tale Galois covering 
either by an abelian threefold or by the product of a K3 surface and an elliptic curve.   
We call $X$ of type A in the former case and of type K in the latter case. 
Among many candidates for such coverings, we can always find a unique smallest one, up to isomorphism as a covering \cite{Be2}. 
We call the smallest covering the minimal splitting covering of $X$. 
The main result of this paper is the following: 

\begin{Thm}[Theorem \ref{Main Thm}] \label{THM_intro}
There exist exactly eight Calabi--Yau threefolds of type K, up to deformation equivalence.
The equivalence class is uniquely determined by the Galois group $G$ of the minimal splitting covering. 
Moreover, the Galois group is isomorphic to one of the following combinations of cyclic and dihedral groups
$$
C_{2},\ C_2 \times C_2, \ C_2 \times C_2 \times C_2, \ D_{6}, \ D_{8}, \ D_{10}, \ D_{12}, \ or \ C_2 \times D_8. 
$$ 
\end{Thm}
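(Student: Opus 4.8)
The plan is to determine every finite group $G$ that can arise as the Galois group of the minimal splitting covering $\pi\colon K\times E\to X=(K\times E)/G$ of a Calabi--Yau threefold of type K, and then to show that $G$ pins down the deformation class. Since $K$ and $E$ have different dimensions and the Albanese fibration of $K\times E$ is intrinsic, every automorphism preserves the two factors, so $\Aut(K\times E)=\Aut(K)\times\Aut(E)$ and I may write $g=(g_K,g_E)$ for $g\in G$. I would first record two numerical constraints. Writing $g_K^\ast\omega_K=\alpha(g)\,\omega_K$ and $g_E^\ast\omega_E=\beta(g)\,\omega_E$, triviality of $K_X$ forces $\alpha\beta=1$, while $H^1(X,\mathcal O_X)=H^1(E,\mathcal O_E)^G=0$ forces the rotation character $\beta$ (equivalently the image $\rho(G)$ of $G$ in the group of origin--fixing automorphisms $C_n$ of $E$, with $n\in\{2,4,6\}$) to be nontrivial. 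Let $N=\ker\rho$ be the subgroup acting on $E$ by translations; then $G/N\cong\rho(G)$ is cyclic of order $I\mid n$.

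The decisive step is to prove $I=2$. Because $\pi$ is \'etale, every $g\neq e$ is fixed-point-free on $K\times E$, i.e.\ $g_K$ or $g_E$ has no fixed point; a nontrivial rotation of $E$ always has a fixed point, so whenever $\rho(g)\neq1$ the automorphism $g_K$ must be fixed-point-free. The holomorphic Lefschetz fixed-point formula on a K3 surface gives holomorphic Lefschetz number $1+\overline{\alpha(g)}$, which must vanish when $g_K$ has no fixed point, so $\alpha(g)=-1$ for every such $g$. Applying this to the powers $g_0,g_0^2,\dots,g_0^{I-1}$ of a generator $g_0$ of $G/N$ yields $\alpha(g_0)^{\,j}=-1$ for all $1\le j\le I-1$, which is impossible once $I\ge3$. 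Hence $I=2$, and $\sigma:=g_0$ restricts to a fixed-point-free anti-symplectic (Enriques-type) involution $\sigma_K$, while $N$ acts symplectically. The same Lefschetz obstruction rules out $n\in N\setminus\{e\}$ with $n_E=\id$, so $N$ embeds into $E$ as a finite subgroup; in particular $N$ is abelian and generated by at most two elements. Conjugation by $\sigma$ acts on $N\subset E$ through $-1$, so $G$ is an extension $1\to N\to G\to C_2\to1$ in which $C_2$ acts by inversion. Moreover, by minimality of the splitting covering $N$ must act faithfully on $K$ (otherwise its kernel consists of pure $E$-translations and one could descend the elliptic factor to a smaller splitting covering), so $N$ is realized as a group of symplectic automorphisms of $K$.

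It remains to enumerate the admissible $N$, and this is where the real work lies. For each nontrivial $n\in N$ the element $n\sigma$ again has nontrivial rotation, so the whole coset $N_K\sigma_K\subset\Aut(K)$ must consist of fixed-point-free (hence anti-symplectic) involutions, while $N$ acts symplectically. Translating this through the Torelli theorem, $N$ determines a co-invariant lattice in $\Lambda_{\mathrm{K3}}=U^{3}\oplus E_8(-1)^{2}$ (signature $(3,19)$), and the Enriques condition on the coset forces an anti-invariant lattice of type $U\oplus U(2)\oplus E_8(-2)$ to embed compatibly; the finite room in $\Lambda_{\mathrm{K3}}$ then bounds $N$. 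Running Nikulin's classification of abelian symplectic actions against this constraint eliminates all larger groups (for instance $C_7,C_8,C_3^{2},C_2\times C_6$, whose co-invariant lattices leave no room for the Enriques summand) and leaves exactly $N\in\{1,\,C_2,\,C_3,\,C_4,\,C_5,\,C_6,\,C_2^{2},\,C_2\times C_4\}$. Since inversion is trivial on $2$-torsion and inverts the remaining cyclic factors, these give respectively $G\cong C_2,\ C_2\times C_2,\ D_6,\ D_8,\ D_{10},\ D_{12},\ C_2\times C_2\times C_2,\ C_2\times D_8$, the eight groups in the statement. I expect the hardest part to be the \emph{existence} half --- realizing each candidate $N$ together with a commuting Enriques involution whose coset is fixed-point-free on an actual K3, by exhibiting the required primitive lattice embeddings --- rather than the non-existence bounds.

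Finally I would show that $G$ determines the deformation class. For a fixed admissible $G$ the pairs $(K,\sigma_K,N)$ are parametrized by a period domain for the relevant lattice-polarized (Enriques) K3 surfaces, which is connected, and $E$ varies in the connected moduli of elliptic curves carrying the prescribed torsion translations; hence all threefolds $X=(K\times E)/G$ with a given $G$ lie in one connected family and are deformation equivalent. Conversely, the minimal splitting covering and its Galois group are deformation invariants by \cite{OS,Be2}, so distinct groups yield distinct classes. Combining the enumeration with this connectedness gives exactly eight deformation classes, labelled by the eight groups, which is the assertion.
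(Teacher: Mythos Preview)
Your structural reduction to $G=N\rtimes C_2$ with $N$ abelian, symplectic on the K3, and acting by translations on $E$ is correct and essentially reproduces Oguiso--Sakurai's Proposition (cited in the paper as \cite{OS}). The real gaps lie in the two steps you treat as routine. First, your exclusion of $N=C_3\times C_3$ by ``no room for the Enriques summand'' does not work: the paper computes (in the Key Lemma proof) that for $H=C_3\times C_3$ one would have $\Lambda^G\cong U(2)$, exactly as for $C_5$, $C_6$, and $C_2\times C_4$, so the lattice configuration is perfectly consistent. The actual exclusion is geometric: one shows (via the Key Lemma) that any such K3 would be a double cover of $\PP^1\times\PP^1$ branched along a $(C_3\times C_3)$-invariant $(4,4)$-curve, and then checks directly that every such invariant polynomial is divisible by $x^2y^2$, so the branch curve cannot have only ADE singularities. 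No purely cohomological rank-count sees this.

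Second, and more seriously, your assertion that ``the period domain \ldots\ is connected'' hides the main theorem. What you need is that the induced $G$-action on the K3 lattice $\Lambda$ is unique up to conjugacy in $\mathrm O(\Lambda)$ (otherwise the moduli space $\mathcal M^G$ could have several components indexed by inequivalent lattice actions), and this is precisely what the paper spends Sections~3 and~4 proving. Their route is: establish the Key Lemma that $\mathrm{NS}(S)^G$ always contains a class of square $4$ (a delicate case-by-case lattice computation over $\Z_p$); deduce that every K3 with a Calabi--Yau $G$-action is generically a $G$-equivariant Horikawa model; classify those models explicitly; and read off both existence and irreducibility from the explicit families. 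Your expectation that existence is the hard half is inverted---the explicit Horikawa constructions are concrete, while the uniqueness of the lattice action (hence connectedness of the moduli) is where the substantial work lies and is not available by abstract period-domain reasoning alone.
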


Most Calabi--Yau threefolds we know have finite fundamental groups: 
for example, complete intersection Calabi--Yau threefolds in toric varieties or homogeneous spaces, and (resolutions of singularities of) finite quotients thereof.   
Calabi--Yau threefolds with infinite fundamental group were only partially explored before the pioneering work of Oguiso and Sakurai \cite{OS}. 
In their paper, they made a list of possible Galois groups 
for type K but it was not settled whether they really occur or not.  
In this paper, we complement their work by providing the full classification (Theorem \ref{THM_intro}) 
and also give an explicit presentation for the deformation classes of the eight Calabi--Yau threefolds of type K. 

The results described in this paper represent the first step in our program which is aimed at more detailed understanding of Calabi--Yau threefolds of type K. 
Calabi--Yau threefolds of type K are relatively simple yet rich enough to display the essential complexities of Calabi--Yau geometries, 
and we expect that they will provide good testing-grounds for general theories and conjectures. 
Indeed, the simplest example, known as the Enriques Calabi--Yau threefold (or the FHSV-model \cite{FHSV}), 
has been one of the most tractable compact Calabi--Yau threefolds  
both in string theory and mathematics (see for example \cite{FHSV,As, KM2,MP}).     
A particularly nice property of Calabi--Yau threefolds of type K is their fibration structure; 
they all admit a K3 fibration, an abelian surface fibration, and an elliptic fibration.   
This rich structure suggests that they play an important role in dualities among various string theories. 
In the forthcoming paper \cite{HK}, we will discuss mirror symmetry of Calabi--Yau threefolds of type K. 

We will also provide the full classification of Calabi--Yau threefolds of type A, again based on Oguiso and Sakurai's work \cite{OS}. 
In contrast to Calabi--Yau threefolds of type K, Calabi--Yau threefolds of type A are classified not by the Galois groups of the minimal splitting coverings, 
but by the {\it minimal totally splitting coverings}, where abelian threefolds 
that 
 cover Calabi--Yau threefolds of type A split into the product of three elliptic curves (Theorem \ref{Refinement}). 
Together with the classification of Calabi--Yau threefolds of type K, we finally complete the full classification of Calabi--Yau threefolds with infinite fundamental group:

\begin{Thm}[Theorem \ref{Classification A and K}]
There exist exactly fourteen deformation classes of
 Calabi--Yau threefolds with infinite fundamental group.
More precisely, six of them are of type A, and eight of them are of type K.
\end{Thm}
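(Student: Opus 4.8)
The plan is to derive the final count by combining the two preceding classification theorems, once the dichotomy between type A and type K has been shown to be exhaustive, mutually exclusive, and invariant under deformation equivalence. First I would recall that, by the Bogomolov decomposition theorem as invoked in the introduction, every Calabi--Yau threefold $X$ with infinite fundamental group admits a finite \'etale Galois cover either by an abelian threefold (type A) or by the product of a K3 surface and an elliptic curve (type K); so the two types already exhaust all cases. It then remains to check that the union is disjoint \emph{as a set of deformation classes}, i.e. that no type A threefold is deformation equivalent to a type K threefold, after which I can simply add the two counts supplied by Theorem \ref{Main Thm} and Theorem \ref{Refinement}.

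The key step is the deformation invariance of the type, which I would establish topologically through the second homotopy group, using the identity $\pi_2(X) \cong \pi_2(\WT{X})$ for the universal cover $\WT{X}$. If $X = A/G$ is of type A with $A$ an abelian threefold, then $\WT{X}$ is the universal cover of $A$, namely $\C^3 \cong \R^6$, which is contractible, so $\pi_2(X) = 0$ and $X$ is aspherical. If instead $X = (S \times E)/G$ is of type K with $S$ a K3 surface and $E$ an elliptic curve, then $\WT{X} \cong S \times \C$ is homotopy equivalent to $S$, whence $\pi_2(X) \cong \pi_2(S) \neq 0$ and $X$ is not aspherical. Since $\pi_2$ is a homotopy invariant and the fibres of a smooth proper family over a connected base are mutually diffeomorphic by Ehresmann's theorem, the vanishing or non-vanishing of $\pi_2$ is constant along any deformation. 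Consequently the type is a deformation invariant, and the type A and type K threefolds form disjoint collections of deformation classes.

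With exhaustiveness and disjointness in hand, the conclusion is immediate: Theorem \ref{Main Thm} produces exactly eight deformation classes of type K, Theorem \ref{Refinement} produces exactly six of type A, and each Calabi--Yau threefold with infinite fundamental group belongs to precisely one of these two families. Hence the total number of deformation classes is $8 + 6 = 14$, with six of type A and eight of type K.

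The main obstacle I anticipate is not the arithmetic but making the invariance step fully rigorous: one must confirm that the notion of deformation equivalence in play is realized (through a chain of) smooth proper families, so that Ehresmann's theorem genuinely forces the homotopy type, and hence $\pi_2$, to be locally constant. In addition one should verify that the minimal splitting covering underlying both Theorem \ref{Main Thm} and Theorem \ref{Refinement} is an intrinsic invariant of $X$ rather than of a chosen model, as this is precisely what guarantees that the eight and the six classes are honestly distinct and are counted without overlap.
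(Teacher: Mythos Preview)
Your proposal is correct and follows the same route as the paper, which simply states the theorem as an immediate consequence of Theorem~\ref{Main Thm} and Proposition~\ref{Refinement} without further argument. Your $\pi_2$ computation to show that the type is a deformation invariant (type A threefolds are aspherical while type K threefolds have $\pi_2\cong H_2(S,\Z)\neq 0$) is a clean justification of a point the paper leaves implicit, and your caveat about verifying that the six type A cases in Proposition~\ref{Refinement} are genuinely pairwise deformation-inequivalent is well placed, since the paper does not spell this out either.
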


It is remarkable that we can study Calabi--Yau threefolds very concretely by simply assuming that their fundamental groups are infinite. 
Recall that a fundamental gap in the classification of algebraic threefolds is the lack of understanding of Calabi--Yau threefolds. 
We hope that our results unveil an interesting class of Calabi--Yau threefolds and shed some light on the further investigation of general compact Calabi--Yau threefolds. 

\subsection*{Structure of Paper}
In Section 2 we recall some basics on lattices and K3 surfaces. 
Lattice theory will be useful when we study finite automorphism groups on K3 surfaces in later sections. 
Section 3 is the main part of this paper. 
It begins with a review of Oguiso and Sakurai's fundamental work \cite{OS}, 
which essentially reduces the study of Calabi--Yau threefolds of type K to that of K3 surfaces equipped with Calabi--Yau actions (Definition \ref{def_cyg_k3}).  
It then provides the full classification of Calabi--Yau threefolds of type K, presenting all the deformation classes. 
Section 4 is devoted to the proof of a key technical result, Lemma \ref{LEM_keylemma} (Key Lemma), which plays a crucial role in Section 3. 
Section 5 addresses some basic properties of Calabi--Yau threefolds of type K. 
Section 6 improves Oguiso and Sakurai's work on Calabi--Yau threefolds of type A. 
It finally completes the classification of Calabi--Yau threefolds with infinite fundamental group. 

\subsection*{Acknowledgement}
It is a pleasure to record our thanks to K. Behrend, J. Bryan, I. Dolgachev, C. Doran, S. Hosono, J. Keum, K. Oguiso, T. Terasoma and S.-T. Yau for inspiring discussions. 
Much of this research was done when the first author was supported by Korea Institute for Advanced Study.
The second author warmly thanks N. Yui for her hospitality at the Fields Institute, where some of this project was carried out.  
The second author is supported by the Center of Mathematical Sciences and Applications at Harvard University. 



\section{Lattices and K3 surfaces}
We begin with a brief summary of the basics of lattices and K3 surfaces.  
This will also serve to set conventions and notations. 
Standard references are \cite{BHPV, Ni2}.

\subsection{Lattices} \label{SUBSECT_lattice}
 A lattice is a free $\Z$-module $L$ of finite rank together with a symmetric bilinear form $\langle *,**\rangle\colon L\times L\to \Z$. 
By an abuse of notation, we denote a lattice simply by $L$. 
 With respect to a choice of basis, the bilinear form is represented by a Gram matrix and the discriminant $\disc(L)$ is the determinant of the Gram matrix.  
We denote by $\mathrm{O}(L)$ the group of
 automorphisms of $L$.
We define $L(n)$ to be the lattice obtained by multiplying the bilinear form $L$ by an integer $n$. 
For $a \in \Q$ we denote by $\langle a \rangle$
 the lattice of rank $1$ generated by $x$ with $x^2:=\langle x, x\rangle=a$. 
A lattice $L$ is called even if $x^2 \in 2\mathbb{Z}$ for all $x \in L$.  
$L$ is non-degenerate if $\disc(L)\neq 0$ and unimodular if $\disc(L)=\pm1$.
If $L$ is a non-degenerate lattice, the signature of $L$ is the pair $(t_{+},t_{-})$ where $t_+$ and $t_-$ respectively denote the dimensions of 
the positive and negative eigenvalues of $L\otimes \mathbb{R}$.
We define $\sign L:=t_+-t_-$.

A sublattice $M$ of a lattice $L$ is a submodule of $L$ with the bilinear form of $L$ restricted to $M$.
A sublattice $M$ of $L$ is called primitive if $L/M$ is torsion free. 
For a sublattice $M$ of $L$, we denote the orthogonal complement of $M$ in $L$ by  $M^{\bot}_L$ (or simply $M^\bot$). 
An action of a group $G$ on a lattice $L$ preserves the bilinear form unless otherwise stated and we define the invariant part $L^G$ and the coinvariant part $L_G$ of $L$ by
$$
L^G:=\{x\in L\ | \ g\cdot x=x \  (\forall g\in G) \},\quad
L_G:=(L^G)^\bot_L.
$$
We simply denote $L^{\langle g\rangle}$ and $L_{\langle g\rangle}$ by $L^{g}$ and $L_{g}$ respectively for $g \in G$.
If another group $H$ acts on $L$,
 we denote $L^G \cap L_H$ by $L^G_H$.

The hyperbolic lattice $U$ is the rank $2$ lattice whose Gram matrix is given by $\begin{bmatrix} 0 & 1\\ 1 & 0\\ \end{bmatrix}$ and 
the corresponding basis $e,f$ is called the standard basis. 
Let $A_{m}, \ D_{n}, \ E_{l}, \ (m\ge 1, \ n\ge 4, \ l =6,7,8)$ be the lattices defined by the corresponding Cartan matrices. 
Every indefinite even unimodular lattice can be realized as an orthogonal sum of copies of $U$ and $E_{8}(\pm 1)$ in an essentially unique way, 
the only relation being $E_{8}\oplus E_{8}(-1)\cong U^{\oplus 8}$.  
Thus an even unimodular lattice of signature $(3,19)$ is isomorphic to $\Lambda:= U^{\oplus 3} \oplus E_{8} (-1)^{\oplus 2}$, which is called the K3 lattice. 


Let $L$ be a non-degenerate even lattice. 
The bilinear form determined a canonical embedding $L \hookrightarrow L^{\vee}:=\mathrm{Hom}(L,\mathbb{Z})$. 
The discriminant group $A(L):=L^{\vee}/L$ is an abelian group of order $|\disc(L)|$. equipped with a quadratic map $q(L)\colon A(L) \rightarrow \mathbb{Q}/2\mathbb{Z}$
by sending $x+L \mapsto x^2 + 2\mathbb{Z}$. 
Two even lattices $L$ and $L'$ have isomorphic discriminant form 
if and only if they are stably equivalent, that is, $L\oplus K \cong L' \oplus K'$ for some even unimodular lattices $K$ and $K'$. 
Since the rank of an even unimodular is divisible by $8$,
 $\sign q(L):=\sign L \bmod 8$ is well-defined.  
Let $M \hookrightarrow L$ be a primitive embedding of non-degenerate even lattices and suppose that $L$ is unimodular, 
 then there is a natural isomorphism $(A(M),q(M))\cong (A(M^\perp),-q(M^\perp))$.  
The genus of $L$ is defined as the set of isomorphism classes of lattices $L'$ such that the signature of $L'$ is the same as that of $L$ and $(A(L),q(L)) \cong (A(L'),q(L'))$.

\begin{Thm}[\cite{Ni2,Om}] \label{Nik Genus}
Let $L$ be a non-degenerate even lattice with
 $\rk L \geq 3$.
If $L\cong U(n)\oplus L'$ for a positive integer $n$
 and a lattice $L'$,
 then the genus of $L$ consists of only one class. 
\end{Thm}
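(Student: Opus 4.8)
The plan is to deduce that the genus of $L$ is a single isometry class by combining strong approximation for the spin group with an explicit local spinor-norm computation that the hyperbolic summand $U(n)$ makes available at every prime. First I would note that the hypothesis already forces $L$ to be indefinite: the Gram matrix $\begin{bmatrix} 0 & n\\ n & 0\end{bmatrix}$ of $U(n)$ has eigenvalues $\pm n$ with $n>0$, so $U(n)$ has signature $(1,1)$ and hence $L=U(n)\oplus L'$ has $t_+\ge 1$ and $t_-\ge 1$. Since in addition $\rk L\ge 3$, Eichler's strong approximation theorem for the spin group (see \cite{Om}) applies and tells us that every spinor genus in the genus of $L$ consists of a single isometry class. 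It therefore suffices to show that the genus of $L$ contains only one spinor genus.

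To this end I would use the idelic description of the set of spinor genera in a genus: it is a torsor under
$$\A^\times \big/ \Q^\times\,\theta(\mathrm{O}^+(L_\infty))\,{\textstyle\prod_p}\,\theta(\mathrm{O}^+(L_p)),$$
where $\theta$ is the spinor norm, taken in $\Q_v^\times/(\Q_v^\times)^2$ at each place $v$ (see \cite{Om}). Because $\Q$ has class number one, one has $\A^\times=\Q^\times\cdot\big(\R^\times\times\prod_p\Z_p^\times\big)$, so this quotient is trivial as soon as $\theta(\mathrm{O}^+(L_\infty))=\R^\times$ and $\theta(\mathrm{O}^+(L_p))\supseteq\Z_p^\times$ modulo squares for every prime $p$. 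The archimedean condition is immediate from indefiniteness, since the real orthogonal group of an indefinite space contains reflections in vectors of both signs.

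The main point is the local statement at the finite primes, where the summand $U(n)$ does all the work uniformly. Let $e,f$ be the standard isotropic basis of $U(n)_p$, so $e^2=f^2=0$ and $\langle e,f\rangle=n$. Scaling the bilinear form by $n$ does not change the isometry group, so for each unit $t\in\Z_p^\times$ the rotation $\rho_t\colon e\mapsto te,\ f\mapsto t^{-1}f$ lies in $\mathrm{O}^+(U(n)_p)$, and I extend it to $L_p$ by the identity on $L'_p$. Writing $\rho_t$ (up to an overall sign that is irrelevant modulo squares) as the product of the reflections in $e+f$ and in $e-tf$, whose norms are $(e+f)^2=2n$ and $(e-tf)^2=-2tn$, I obtain $\theta(\rho_t)=(2n)(-2tn)=-4n^2t\equiv -t \pmod{(\Q_p^\times)^2}$. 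As $t$ ranges over $\Z_p^\times$ this produces every unit square class, giving $\theta(\mathrm{O}^+(L_p))\supseteq\Z_p^\times$ as required; combined with the real place, the quotient above is trivial and the genus is a single spinor genus, hence a single class.

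The step I expect to be the main obstacle is precisely this local spinor-norm computation at the bad primes: $p=2$, where spinor norms are delicate, and the primes $p\mid n$, where $U(n)_p$ is no longer unimodular. The key that makes the argument uniform is to compute with the torus elements $\rho_t$ rather than with reflections inside $U(n)_p$ — a single reflection only realizes the fixed class $2n$ — and to check that the reflection in $e-tf$ stays integral for every unit $t$ (its two nontrivial coordinate coefficients are $t$ and $t^{-1}$, both units) even at $p=2$, while the factor $n^2$ drops out harmlessly as a square.
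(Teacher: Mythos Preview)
The paper does not give its own proof of this statement: it is quoted from the literature (Nikulin \cite{Ni2} and O'Meara \cite{Om}) and used as a black box in the proof of the Key Lemma. So there is nothing to compare your argument against within the paper itself.

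That said, your sketch is the standard route to this result and is essentially correct. Indefiniteness plus $\rk L\ge 3$ gives Eichler's theorem (spinor genus $=$ class), and the hyperbolic plane $U(n)$, present at every prime, furnishes enough local proper isometries to force $\theta(\mathrm{O}^+(L_p))\supseteq \Z_p^\times$ modulo squares, hence a single spinor genus. Your integrality check for the reflections $s_{e+f}$ and $s_{e-tf}$ is the right thing to verify: on the $U(n)$ factor one gets $s_{e+f}\colon e\leftrightarrow -f$ and $s_{e-tf}\colon e\mapsto tf,\ f\mapsto t^{-1}e$, both integral for $t\in\Z_p^\times$, and this is unaffected by $p=2$ or $p\mid n$. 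One small imprecision: the product $s_{e+f}s_{e-tf}$ is not $\rho_t$ on $U(n)$ but $-\rho_t$ (i.e.\ $e\mapsto -te$, $f\mapsto -t^{-1}f$), so your ``up to an overall sign'' hides a factor of $\nu=-\mathrm{id}_{U(n)}\oplus \mathrm{id}_{L'}$ whose spinor norm is $(2n)(-2n)\equiv -1$. This does not matter for the conclusion, since $\{-t:t\in\Z_p^\times\}=\Z_p^\times$, but it would be cleaner either to work directly with the element $s_{e+f}s_{e-tf}\in\mathrm{O}^+(L_p)$ (no need to name it $\rho_t$) or to note that $\theta(\rho_t)=t$ after accounting for $\nu$.
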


Let $L$ be a lattice and $M$ a module such that $L \subset M \subset L^{\vee}$. 
We say that $M$ equipped with the induced bilinear form $\langle *,**\rangle$ is an overlattice of $L$ if $\langle *,**\rangle$ takes integer values on $M$. 
Any lattice which includes $L$ as a sublattice of finite index is considered as an overlattice of $L$. 

\begin{Prop}[\cite{Ni2}] \label{Nik Isotropic}
Let $L$ be a non-degenerate even lattice and $M$ a submodule of $L^{\vee}$ such that $L \subset M$. 
Then $M$ is an even overlattice of $L$ if and only if the image of $M$ in $A(L)$ is an isotropic subgroup, that is, the restriction of $q(L)$ to $M/L$ is zero. 
Moreover, there is a natural one-to-one correspondence between the set of even overlattices of $L$ and the set of isotropic subgroups of $A(L)$. 
\end{Prop}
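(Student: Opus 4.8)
The plan is to work inside the ambient $\Q$-vector space $L\otimes\Q$, in which $L^\vee$ is identified with $\{x\in L\otimes\Q : \langle x,\ell\rangle\in\Z \text{ for all } \ell\in L\}$; here the non-degeneracy of $L$ is exactly what makes $L\otimes\Q\to\Hom(L\otimes\Q,\Q)$ an isomorphism, so that the $\Z$-valued form on $L$ extends uniquely to a $\Q$-valued symmetric form on $L^\vee$, and $A(L)=L^\vee/L$ is finite. Writing $\pi\colon L^\vee\to A(L)$ for the quotient map, I first want to record why $q(L)$ is well-defined: for $x\in L^\vee$ and $\ell\in L$ one has $(x+\ell)^2=x^2+2\langle x,\ell\rangle+\ell^2$, and since $\langle x,\ell\rangle\in\Z$ while $\ell^2\in 2\Z$ by evenness of $L$, the class $x^2+2\Z$ depends only on $\pi(x)$.

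For the equivalence, set $H:=M/L=\pi(M)\subset A(L)$. The forward direction is immediate: if $M$ is an even overlattice then $x^2\in 2\Z$ for every $x\in M$, which says precisely that $q(L)(\pi(x))=0$, i.e.\ $q(L)|_H=0$. The substance is the converse, where the key tool is the polarization identity $2\langle x,y\rangle=(x+y)^2-x^2-y^2$. Assuming $H$ isotropic, every element of $M$ has square in $2\Z$; applying the identity to $x,y\in M$ (so that $x+y\in M$ as well) gives $2\langle x,y\rangle\in 2\Z$, hence $\langle x,y\rangle\in\Z$. Thus the $\Q$-valued form restricts to a $\Z$-valued form on $M$, and together with $x^2\in 2\Z$ this exhibits $M$ as an even lattice containing $L$ with finite index, i.e.\ an even overlattice. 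This polarization step is the heart of the matter: isotropy is a priori a condition on squares only, and one must upgrade it to integrality of the full bilinear form.

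Finally, for the bijection, the assignment $M\mapsto M/L$ lands in the set of isotropic subgroups by the equivalence just proved. Its inverse is the preimage construction: given an isotropic subgroup $H\subset A(L)$, put $M:=\pi^{-1}(H)$, so that $L\subseteq M\subseteq L^\vee$ and $\pi(M)=H$; by the converse direction $M$ is an even overlattice. That these two operations are mutually inverse is the standard correspondence between subgroups of $A(L)$ and intermediate modules $L\subseteq M\subseteq L^\vee$, since each such $M$ coincides with the preimage of its own image $M/L$. I expect no genuine obstacle beyond cleanly separating the two hypotheses: non-degeneracy is used to extend the form to $L^\vee$, whereas evenness is used both for the well-definedness of $q(L)$ and, through polarization, for the converse implication.
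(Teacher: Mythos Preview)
Your proof is correct and is the standard argument for this classical result of Nikulin. Note, however, that the paper does not supply its own proof of this proposition: it is simply quoted from \cite{Ni2} without argument, so there is nothing to compare against beyond the original reference. Your write-up correctly isolates the two uses of the hypotheses (non-degeneracy to embed $L^\vee$ in $L\otimes\Q$ with a $\Q$-valued extension of the form; evenness for both the well-definedness of $q(L)$ and the polarization step), and the polarization identity is indeed the only nontrivial point in passing from isotropy of $M/L$ to integrality of the form on $M$.
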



\begin{Prop}[\cite{Ni2}] \label{Nik Disc form}
Let $K$ and $L$ be non-degenerate even lattices. 
Then there exists a primitive embedding of $K$ into an even unimodular lattice $\Gamma$ such that $K^{\bot} \cong L$, 
if and only if $(A(K),q(K)) \cong (A(L),-q(L))$. 
More precisely, any such $\Gamma$ is of the form $\Gamma_{\lambda} \subset K^{\vee}\oplus L^{\vee}$ for some isomorphism
$$
\lambda\colon(A(K),q(K)) \rightarrow (A(L),-q(L)),
$$
where $\Gamma_{\lambda}$ is the lattice corresponding to the isotropic subgroup
$$
\{(x,\lambda(x))\in A(K)\oplus A(L) \ | \ x \in A(K) \} \subset A(K) \oplus A(L). 
$$
\end{Prop}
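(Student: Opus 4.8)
The plan is to deduce this from the overlattice correspondence of Proposition \ref{Nik Isotropic}, applied to the orthogonal direct sum $K \oplus L$, whose discriminant form is $(A(K) \oplus A(L),\, q(K) \oplus q(L))$. The two implications then become statements about isotropic subgroups of $A(K) \oplus A(L)$ that are maximal in the sense of being self-orthogonal.

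First I would treat the construction and the ``if'' direction together. Given an isomorphism $\lambda\colon (A(K), q(K)) \to (A(L), -q(L))$, consider its graph $H_\lambda = \{(x, \lambda(x)) \mid x \in A(K)\}$. For $(x,\lambda(x)) \in H_\lambda$ one has $q(K)(x) + q(L)(\lambda(x)) = q(K)(x) - q(K)(x) = 0$, so $H_\lambda$ is isotropic; since $|H_\lambda| = |A(K)| = |A(L)|$ while $|A(K)\oplus A(L)| = |A(K)|\cdot|A(L)|$, isotropy and the order identity $|H|\cdot|H^\perp| = |A(K)\oplus A(L)|$ force $H_\lambda = H_\lambda^\perp$. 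By Proposition \ref{Nik Isotropic} the associated overlattice $\Gamma_\lambda \subset K^\vee \oplus L^\vee$ is even, and since $[\Gamma_\lambda : K\oplus L] = |H_\lambda| = |A(K)|$ one computes $|\disc \Gamma_\lambda| = |\disc K|\cdot|\disc L|/|H_\lambda|^2 = 1$, so $\Gamma_\lambda$ is unimodular. It then remains to check that $K \hookrightarrow \Gamma_\lambda$ is primitive with $K^\perp_{\Gamma_\lambda} = L$: an element $(a,b) \in \Gamma_\lambda$ lying in $K \otimes \Q$ has $b = 0$, whence $\lambda$ applied to the image of $a$ vanishes and injectivity of $\lambda$ gives $a \in K$, proving primitivity; and $(a,b) \in \Gamma_\lambda$ is orthogonal to all of $K$ exactly when $a = 0$, in which case membership in $H_\lambda$ forces $b \in L$, giving $K^\perp_{\Gamma_\lambda} = L$.

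For the ``only if'' direction, suppose $K$ embeds primitively in an even unimodular lattice $\Gamma$ with $K^\perp \cong L$. The isomorphism $(A(K), q(K)) \cong (A(K^\perp), -q(K^\perp))$ recorded before Theorem \ref{Nik Genus} immediately yields $(A(K), q(K)) \cong (A(L), -q(L))$. To obtain the ``more precisely'' statement, I note that $K \oplus K^\perp$ has finite index in $\Gamma$, so $\Gamma$ is an even overlattice of $K \oplus K^\perp$ and hence corresponds, via Proposition \ref{Nik Isotropic}, to an isotropic subgroup $H = \Gamma/(K \oplus K^\perp) \subset A(K) \oplus A(K^\perp)$; unimodularity of $\Gamma$ again forces $H = H^\perp$.

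The crux is then to show that $H$ is the graph of an isomorphism. Because $K$ and $K^\perp$ are both primitive in $\Gamma$, the two projections $p_1\colon H \to A(K)$ and $p_2\colon H \to A(K^\perp)$ are injective, primitivity of one summand being equivalent to injectivity of the projection of $H$ onto the discriminant group of the other summand (this comes from identifying $\ker p_2$ with $(\Gamma \cap (K\otimes\Q))/K$ and symmetrically for $p_1$). Combined with the order identity $|H|^2 = |A(K)|\cdot|A(K^\perp)|$ coming from $H = H^\perp$, injectivity forces both projections to be bijective, so $H = \{(x, \lambda(x))\}$ is the graph of $\lambda := p_2 \circ p_1^{-1}$. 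Finally, isotropy of $H$ translates precisely into $q(K^\perp)(\lambda(x)) = -q(K)(x)$, so $\lambda$ is an isomorphism $(A(K), q(K)) \to (A(K^\perp), -q(K^\perp)) \cong (A(L), -q(L))$, exhibiting $\Gamma$ in the stated form $\Gamma_\lambda$. I expect this last paragraph --- establishing that both projections are bijective and that the resulting glue map respects the quadratic forms --- to be the main obstacle, since it is where primitivity, self-duality of $\Gamma$, and the order count must be combined carefully.
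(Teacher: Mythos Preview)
The paper does not supply its own proof of this proposition: it is quoted from Nikulin \cite{Ni2} and stated without argument. Your proposal is correct and is essentially the standard proof (the one in Nikulin's paper), deducing everything from the overlattice--isotropic subgroup correspondence of Proposition~\ref{Nik Isotropic}; there is nothing to compare against.
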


\begin{Lem} \label{Involution}
Let $L$ be a non-degenerate lattice and
 $\iota\in\mathrm{O}(L)$ an involution.   
Then
 $L/(L^\iota \oplus L_\iota) \cong (\Z/2\Z)^n$
 for some $n \le \min\{\rk L^\iota, \rk L_\iota\}$. 
\end{Lem}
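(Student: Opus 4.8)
The plan is to use that an involution is diagonalizable over $\Q$, and to compare $L$ with the finite-index sublattice $M := L^\iota \oplus L_\iota$. First I would pass to $L\otimes\Q$ and decompose it into the $(\pm 1)$-eigenspaces $V_{\pm}$ of $\iota$. The computation $\langle x,y\rangle=\langle \iota x,\iota y\rangle=-\langle x,y\rangle$ for $x\in V_+$ and $y\in V_-$ shows $V_+\perp V_-$; as $L$ is non-degenerate each $V_\pm$ is non-degenerate, and $V_+=L^\iota\otimes\Q$ while $V_-=(L^\iota)^{\bot}\otimes\Q=L_\iota\otimes\Q$. The key elementary identity is then
$$
2x=(x+\iota x)+(x-\iota x),\qquad x+\iota x\in L^\iota,\quad x-\iota x\in L_\iota,
$$
valid for every $x\in L$ (the second vector is anti-invariant, hence orthogonal to $L^\iota$, hence in $L_\iota$). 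This gives $2L\subseteq M\subseteq L$, so $L/M$ is annihilated by $2$ and therefore $L/M\cong(\Z/2\Z)^n$ for some $n$; it remains to bound $n$.

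For the bound I would exploit that $M$ has finite index in $L$, so $M\subseteq L\subseteq M^{\vee}$ and hence $L/M$ embeds into $A(M)=A(L^\iota)\oplus A(L_\iota)$. Writing $x\in L\subseteq M^\vee=(L^\iota)^{\vee}\oplus (L_\iota)^{\vee}$ with first component $\frac12(x+\iota x)$, the projection $p_+\colon L/M\to A(L^\iota)$ sends the class of $x$ to $\frac12(x+\iota x)\bmod L^\iota$. I claim $p_+$ is injective: if $\frac12(x+\iota x)\in L^\iota$, then $\frac12(x-\iota x)=x-\frac12(x+\iota x)\in L\cap V_-=L_\iota$, using that the orthogonal complement $L_\iota$ is a primitive sublattice; hence $x\in M$. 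Interchanging the roles of $L^\iota$ and $L_\iota$ shows that the projection $p_-\colon L/M\to A(L_\iota)$ is injective as well.

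Finally, $A(L^\iota)=(L^\iota)^{\vee}/L^\iota$ is a quotient of the free module $(L^\iota)^{\vee}$ of rank $\rk L^\iota$, so it is generated by at most $\rk L^\iota$ elements and its $2$-torsion subgroup has $\F_2$-dimension at most $\rk L^\iota$. Since $p_+$ embeds $(\Z/2\Z)^n$ into this $2$-torsion, we get $n\le \rk L^\iota$, and symmetrically $n\le \rk L_\iota$, yielding $n\le\min\{\rk L^\iota,\rk L_\iota\}$. I expect the only delicate point to be the injectivity of the two projections in the second step: that is where non-degeneracy (to identify $V_-$ with $L_\iota\otimes\Q$) and the primitivity of $L^\iota$ and $L_\iota=(L^\iota)^{\bot}$ are genuinely used; the eigenspace splitting and the generator count are otherwise routine.
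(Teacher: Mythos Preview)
Your argument is correct and follows essentially the same line as the paper's proof: both use the eigenspace decomposition over $\Q$, the identity $2x=(x+\iota x)+(x-\iota x)$ to get $2L\subset L^\iota\oplus L_\iota$, and an injection of $L/(L^\iota\oplus L_\iota)$ into a $2$-elementary group of rank $\rk L^\iota$ (and symmetrically $\rk L_\iota$). The only cosmetic difference is the target of that injection: the paper maps $x\mapsto (x+\iota x)\bmod 2L^\iota$ directly into $L^\iota/2L^\iota$, whereas you pass through the discriminant group and land in the $2$-torsion of $A(L^\iota)$; multiplication by $2$ identifies your target with a subgroup of theirs and the two maps agree under this identification, so the arguments are equivalent.
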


\begin{proof}
For any $x \in L$, we have a decomposition $x=x_+ + x_-$ 
with $x_+ \in L^\iota \otimes \Q$ and $x_- \in L_ \iota \otimes \Q$. 
We have $2x_+=x + \iota (x) \in L$. 
We define $\phi(x \mod L^\iota \oplus L_\iota)=2x_+ \mod 2L^\iota$. 
We can easily see that $\phi\colon L/(L^\iota \oplus L_\iota) \rightarrow L^\iota/2L^\iota$ is a well-defined injection. 
Hence we have $L/(L^\iota \oplus L_\iota ) \cong (\Z/2\Z)^n$ with $n \le \rk L^\iota$. 
Similarly, we have $n \le \rk L_\iota$. 
\end{proof}


\subsection{K3 Surfaces} \label{SECT_k3}
A K3 surface $S$ is a simply-connected compact complex surface with trivial canonical bundle. 
Then $H^2(S,\Z)$ with its intersection form is isomorphic to the K3 lattice $\Lambda= U^{\oplus 3} \oplus E_{8} (-1)^{\oplus 2}$.
It is endowed with a weight-two Hodge structure
$$
H^2(S,\C)=
H^2(S,\Z)\otimes\C=H^{2,0}(S)\oplus H^{1,1}(S)\oplus H^{0,2}(S).
$$
Let $\omega_S$ be a nowhere vanishing holomorphic 2-form on $S$. 
The space $H^{2,0}(S)\cong\C$ is generated by the class of $\omega_S$, which we denote by the same $\omega_S$. 
The N\'eron--Severi lattice $\mathrm{NS}(S)$
 is given by
\begin{equation} \label{EQ_ns_lat}
\mathrm{NS}(S):=\{ x\in H^2(S,\Z) \bigm|
 \langle x,\omega_S \rangle=0 \}.
\end{equation}
Here we extend the bilinear form $\langle *,** \rangle$ on $H^2(S,\Z)$ to that on $H^2(S,\C)$ linearly.
The open subset $\mathcal{K}_S\subset H^{1,1}(S,\R)$ of K\"ahler classes is called the K\"ahler cone of $S$.
It is known that $\mathcal{K}_S$ is a connected component of
\begin{equation*}
 \{ x \in H^{1,1}(S,\R) \bigm|
  x^2>0, ~ \langle x,\delta \rangle\neq 0 ~
  (\forall \delta\in \Delta_S) \},\quad
 \Delta_S := \{ \delta \in \mathrm{NS}(S) \bigm| \delta^2=-2 \}.
\end{equation*}
The study of K3 surfaces reduces to lattice theory by the following two theorems.

\begin{Thm}[Global Torelli Theorem\ \cite{BHPV}] \label{Torelli}
Let $S$ and $T$ be $K3$ surfaces.
Let $\phi\colon H^2(S,\Z)\rightarrow H^2(T,\Z)$ be an isomorphism of lattices satisfying the following conditions.
\begin{enumerate}
\item \label{T_1}
$(\phi\otimes\C)(H^{2,0}(S))=H^{2,0}(T)$.
\item
There exists an element $\kappa\in\mathcal{K}_S$ such that $(\phi\otimes\R)(\kappa)\in\mathcal{K}_{T}$. 
\end{enumerate}
Then there exists a unique isomorphism $f\colon T\rightarrow S$ such that $f^*=\phi$.
\end{Thm}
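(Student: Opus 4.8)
The plan is to follow the classical route to the Global Torelli Theorem, which is the deepest analytic input in the theory of K3 surfaces: first establish the statement on a special class of surfaces where it reduces to an already-available Torelli theorem, and then spread it to all K3 surfaces by a density-and-deformation argument resting on the local Torelli theorem and the surjectivity of the period map. Concretely, I would (i) set up the period map on marked K3 surfaces and prove that it is a local isomorphism, (ii) record the surjectivity of the period map, (iii) prove the theorem for Kummer surfaces by reducing to the Torelli theorem for two-dimensional complex tori, and (iv) propagate from the dense Kummer locus to arbitrary $S$ and $T$, treating uniqueness separately.

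First I would recall that a marking of a K3 surface $S$ is an isometry $m\colon H^2(S,\Z)\xrightarrow{\sim}\Lambda$, which sends the line $H^{2,0}(S)=\C\omega_S$ to a point of the period domain
$$\Omega := \{[\omega]\in\PP(\Lambda\otimes\C) \mid \langle\omega,\omega\rangle=0,\ \langle\omega,\bar\omega\rangle>0\}.$$
The local Torelli theorem asserts that the period map $\mathcal{P}$ from the Kuranishi deformation space of a marked K3 surface to $\Omega$ is a local isomorphism. I would prove this by identifying the tangent space to the deformation space with $H^1(S,T_S)$ and the differential of $\mathcal{P}$ with the contraction map $H^1(S,T_S)\to H^1(S,\Omega^1_S)=H^{1,1}(S)\cong T_{[\omega_S]}\Omega$ induced by the isomorphism $T_S\cong\Omega^1_S$ that $\omega_S$ provides; since $\dim H^1(S,T_S)=20=\dim H^{1,1}(S)$, this differential is an isomorphism. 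I would then take the surjectivity of $\mathcal{P}$ as the second analytic input; it guarantees that the Kummer periods used below are genuinely dense in the target.

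Next I would treat Kummer surfaces. For a two-dimensional complex torus $A$, the Kummer surface $\mathrm{Km}(A)$ carries a canonical primitive Hodge isometry $H^2(A,\Z)(2)\hookrightarrow H^2(\mathrm{Km}(A),\Z)$ whose orthogonal complement is generated by the sixteen exceptional $(-2)$-classes. Given an effective Hodge isometry between two Kummer surfaces, I would first show that it carries the torus part to the torus part and exceptional classes to exceptional classes, using that the configuration of $(-2)$-curves is pinned down by the Kähler cone. Then I would invoke the Torelli theorem for complex tori --- a Hodge isometry $H^2(A,\Z)\to H^2(A',\Z)$ compatible with the orientation of the positive cone is induced by an isomorphism $A\to A'$ up to sign --- to produce an isomorphism of tori descending to the desired isomorphism of Kummer surfaces. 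This settles the theorem on the Kummer locus.

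Finally, propagation. Given arbitrary $S$, $T$ and an effective Hodge isometry $\phi$, I would place the marked surfaces in families over a common base in $\Omega$ whose generic member is Kummer; because the Kummer periods are dense, the theorem holds there, and local Torelli makes "being isomorphic via the prescribed isometry" an open and closed condition along the family, so the isomorphism extends to the special members. \emph{The hard part will be controlling the K\"ahler cone in families.} Distinct K3 surfaces can share a period, differing by reflections in $(-2)$-classes on $\Omega$, and the effectiveness hypothesis $\phi(\mathcal{K}_S)\cap\mathcal{K}_T\neq\emptyset$ is exactly what singles out the correct surface; propagating this condition forces one to understand how the walls $\delta^\perp$ (with $\delta^2=-2$) move and to ensure the limiting isomorphism is holomorphic, which is where the non-Hausdorff nature of the moduli of marked K3 surfaces enters and demands the most care. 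For uniqueness, if $f_1,f_2\colon T\to S$ both satisfy $f_i^*=\phi$, then $g:=f_2^{-1}\circ f_1\in\Aut(T)$ acts trivially on $H^2(T,\Z)$; it therefore fixes $\omega_T$ and a K\"ahler class, and the holomorphic Lefschetz fixed-point formula (equivalently, the fact that a nontrivial symplectic automorphism acts nontrivially on $H^2$) forces $g=\id_T$.
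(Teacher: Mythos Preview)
The paper does not prove this statement at all: it is quoted as a foundational result from the literature with a citation to \cite{BHPV}, and no argument is supplied. So there is no ``paper's own proof'' to compare against; your sketch is essentially the classical proof as presented in the cited reference, and in the context of this paper the appropriate move would have been simply to cite \cite{BHPV} rather than to reproduce the argument.
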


\begin{Thm}[Surjectivity of the period map \cite{BHPV}] \label{SurjPed}
Assume that vectors $\omega\in\Lambda\otimes\C$ and $\kappa\in\Lambda\otimes\R$ satisfy the following conditions:
\begin{enumerate}
\item \label{cond_surj_1}
$ \langle \omega,\omega \rangle=0$, $ \langle\omega,\overline{\omega} \rangle>0$,
 $\langle \kappa,\kappa \rangle>0$ and
 $\langle \kappa,\omega \rangle=0$.
\item \label{cond_surj_2}
 $\langle \kappa,x \rangle\neq 0$
 for any $x\in(\omega)^\bot_\Lambda$ such that $\langle x,x \rangle=-2$.
\end{enumerate}
Then there exist a K3 surface $S$ and an isomorphism $\alpha\colon H^2(S,\Z)\rightarrow\Lambda$ of lattices such that
$\C\omega=(\alpha\otimes\C)(H^{2,0}(S))$ and $\kappa\in(\alpha\otimes\R)(\mathcal{K}_S)$.
\end{Thm}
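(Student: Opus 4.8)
The plan is to split the assertion into two parts: (a) realizing the prescribed Hodge line $\C\omega$ as the period of some marked K3 surface, which is the classical surjectivity of the period map, and (b) adjusting the marking so that the prescribed real class $\kappa$ becomes a K\"ahler class, which is a purely lattice-theoretic manipulation using the chamber structure of the positive cone.

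For (a), I would introduce the period domain
\[
\Omega := \{\, [\eta] \in \PP(\Lambda\otimes\C) \mid \langle\eta,\eta\rangle = 0,\ \langle\eta,\bar\eta\rangle > 0 \,\},
\]
together with the period map $\tau\colon \mathcal{M}\to\Omega$ from the moduli space $\mathcal{M}$ of marked K3 surfaces, sending $(S,\alpha)$ to the line spanned by $(\alpha\otimes\C)(\omega_S)$. Condition (\ref{cond_surj_1}) guarantees $[\omega]\in\Omega$, so it suffices to prove $\tau$ is surjective. First I would invoke the \emph{Local Torelli theorem}: K3 deformations are unobstructed and the differential of $\tau$ is an isomorphism (via the contraction $H^1(S,T_S)\cong \Hom(H^{2,0}(S),H^{1,1}(S))$ induced by $\omega_S$), so $\tau$ is a local isomorphism and in particular has open image.

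To upgrade openness to full surjectivity I would combine two classical inputs, using that $\Omega$ is connected so that it is enough to show the image is also closed. The \emph{density step}: for a two-dimensional complex torus $T$, the Kummer surface $\mathrm{Km}(T)$ is a K3 surface whose period is explicitly determined by the period of $T$; letting $T$ vary shows that Kummer periods are dense in $\Omega$, whence, with openness, the image is dense and open. The \emph{closedness step} uses twistor families: by Yau's theorem each K3 surface carries a Ricci-flat K\"ahler metric, hence a hyperk\"ahler structure and a twistor family over $\PP^1$ whose period traces a compact twistor line in $\Omega$; the twistor lines emanating from image points sweep out $\Omega$ and, being compact, force the image to be closed. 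Thus $\tau$ is surjective, and we obtain a marked K3 surface $(S,\alpha_0)$ with $(\alpha_0\otimes\C)(H^{2,0}(S)) = \C\omega$.

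For (b), note that $\kappa$ is real and $\langle\kappa,\omega\rangle=0$ forces $\langle\kappa,\bar\omega\rangle=0$, so $\kappa\in H^{1,1}(S,\R)$; under $\alpha_0$ we have $\mathrm{NS}(S)=\Lambda\cap\omega^\perp$, and condition (\ref{cond_surj_2}) says $\kappa$ lies in the positive cone and off every wall $\delta^\perp$ with $\delta\in\Delta_S$, hence in a single Weyl chamber. The Weyl group $W$ generated by the reflections $s_\delta$ ($\delta\in\Delta_S$) acts simply transitively on the chambers of each component of $\{x^2>0\}$, and each $s_\delta$ fixes $\omega$ since $\delta\perp\omega$; together with $-\id$ (which swaps the two components and fixes $\C\omega$) these extend to isometries of $\Lambda$ preserving $\C\omega$. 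Since the K\"ahler cone $\mathcal{K}_S$ is itself a chamber, I would pick $\psi$ in the group generated by $W$ and $-\id$ with $\psi^{-1}(\kappa)\in(\alpha_0\otimes\R)(\mathcal{K}_S)$ and set $\alpha:=\psi\circ\alpha_0$; then $(\alpha\otimes\C)(H^{2,0}(S))=\psi(\C\omega)=\C\omega$ and $\kappa\in(\alpha\otimes\R)(\mathcal{K}_S)$, as required. The main obstacle is the analytic heart of (a) — establishing surjectivity rather than mere openness, which genuinely requires Yau's existence theorem together with the Kummer-density and twistor-line arguments — whereas the K\"ahler-class refinement in (b) is routine once the chamber structure is in hand.
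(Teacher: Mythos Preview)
The paper does not give its own proof of this theorem: it is stated with a citation to \cite{BHPV} and used as a black box in the proof of Proposition~\ref{PROP_moduli_S}. Your outline is essentially the standard argument found in the cited reference (local Torelli for openness, density of Kummer periods, and twistor lines via Yau's theorem for surjectivity; then the Weyl-group chamber structure together with $\pm\id$ to place $\kappa$ in the K\"ahler cone), so there is nothing to compare against and your sketch is in line with the literature.
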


An action of a group $G$ on a K3 surface $S$ induces a (left) $G$-action on $H^2(S,\Z)$ by
\begin{equation*} \label{defaction}
g\cdot x:=(g^{-1})^* x,\quad g\in G,~ x\in H^2(S,\Z).
\end{equation*}
The following lemma is useful to study finite group actions on a K3 surface.

\begin{Lem}[{\cite[Lemma 1.7]{OS}}]
 \label{LEM_reflection}
Let $S$ be a K3 surface with an action of a finite group $G$
 and let $x$ be an element in $\mathrm{NS}(S)^G\otimes\R$
 with $x^2>0$.
Suppose that $\langle x, \delta \rangle \neq 0$ for any $\delta\in \mathrm{NS}(S)$ with $\delta^2=-2$.
Then there exists $\gamma\in \mathrm{O}(H^2(S,\Z))$ such that $\gamma(H^{2,0}(S))=H^{2,0}(S)$, $\gamma(x)\in\mathcal{K}_S$, and $\gamma$ commutes with $G$.
\end{Lem}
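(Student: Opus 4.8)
The plan is to realize $\gamma$ as an element of the Weyl group of $\mathrm{NS}(S)$ and to extract the commutation with $G$ \emph{for free} from the fact that this Weyl group acts simply transitively on chambers. First I would set up the chamber geometry inside $V:=\mathrm{NS}(S)\otimes\R$, of signature $(1,\rk\mathrm{NS}(S)-1)$. For each $\delta\in\Delta_S$ the reflection $s_\delta(y)=y+\langle y,\delta\rangle\delta$ lies in $\mathrm{O}(H^2(S,\Z))$, and since $\delta\in\mathrm{NS}(S)$ is orthogonal to the transcendental lattice it fixes $\omega_S$, hence fixes $H^{2,0}(S)$ pointwise. Let $W$ be the group generated by all such $s_\delta$. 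The set $\{y\in V: y^2>0\}$ has two components; let $\mathcal{C}^{+}$ be the one meeting the ample cone $\mathcal{A}_S:=\mathcal{K}_S\cap V$. By the standard theory (consistent with the description of $\mathcal{K}_S$ as a connected component recalled above), $\mathcal{A}_S$ is one chamber of $\mathcal{C}^{+}\setminus\bigcup_{\delta\in\Delta_S}\delta^{\perp}$, every $s_\delta$ preserves $\mathcal{C}^{+}$, and $W$ permutes these chambers simply transitively.

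Next I would locate $x$. Since $\mathrm{NS}(S)^G\otimes\R=V^G$, the element $x$ is $G$-fixed; because $x^2>0$ and $\langle x,\delta\rangle\neq0$ for every $\delta\in\Delta_S$, it lies in the interior of a unique chamber $C$. As $-\id$ is central, preserves $H^{2,0}(S)$, and interchanges $\pm\mathcal{C}^{+}$, I may replace $x$ by $-x$ and $\gamma$ by $-\gamma$ at the end and so assume $x\in\mathcal{C}^{+}$. Since $x$ is $G$-invariant and $G$ permutes chambers, $C$ is $G$-stable. Let $w\in W$ be the unique element with $w(\mathcal{A}_S)=C$.

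The key step, and the only point requiring care, is the $G$-equivariance of $w$. Here I would use that $G$ acts by automorphisms of $S$, so the induced action on $H^2(S,\Z)$ preserves $\mathrm{NS}(S)$, the Hodge structure, and the Kähler cone; hence it preserves $\Delta_S$ and $\mathcal{A}_S$, and conjugation by any $g\in G$ sends $s_\delta$ to $s_{g\delta}$ and thus normalizes $W$. Then $gwg^{-1}\in W$ and $gwg^{-1}(\mathcal{A}_S)=gw(\mathcal{A}_S)=g(C)=C=w(\mathcal{A}_S)$, so simple transitivity forces $gwg^{-1}=w$. Therefore $w$ commutes with $G$, and $\gamma:=w^{-1}$ fixes $H^{2,0}(S)$, sends $x\in C=w(\mathcal{A}_S)$ into $\mathcal{A}_S\subset\mathcal{K}_S$, and commutes with $G$, as required (composing with $-\id$ in the flipped case). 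The main obstacle is conceptual rather than computational: one must recognize that simple transitivity converts the commutation requirement into a \emph{uniqueness} statement, instead of trying to build $\gamma$ by hand as a $G$-symmetric product of reflections, which would fail because reflections over a $G$-orbit of $(-2)$-classes need not commute.
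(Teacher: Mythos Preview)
The paper does not supply its own proof of this lemma; it is quoted verbatim from \cite[Lemma~1.7]{OS} and used as a black box. Your argument is correct and is exactly the standard one (and the one in \cite{OS}): the Weyl group $W\subset\mathrm{O}(H^2(S,\Z))$ generated by reflections in $(-2)$-classes of $\mathrm{NS}(S)$ fixes the period, preserves the positive cone, and acts simply transitively on its chambers; since $G$ normalizes $W$ and stabilizes both the ample chamber $\mathcal{A}_S$ and the chamber $C\ni x$, the unique $w\in W$ with $w(\mathcal{A}_S)=C$ is forced to be $G$-central. Your handling of the sign via $-\id$ is also fine, as $-\id$ preserves $H^{2,0}(S)$ as a line and is central.
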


An automorphism $g$ of a K3 surface $S$ is said to be symplectic if $g^*\omega_S=\omega_S$, and anti-symplectic if $g^* \omega_S=-\omega_S$.


\begin{Thm}[\cite{Ni}]\label{Nik FixPoint}
Let $g$ be a symplectic automorphism of $S$ of finite order, then $\mathrm{ord}(g)\le 8$ 
and the number of fixed points depends only on $\mathrm{ord}(g)$ as given in the following table. 
\begin{center}
 \begin{tabular}{|c|c|c|c|c|c|c|c|}  \hline
$\ord(g)$  & $2$  &  $3$  &  $4$  &  $5$ &  $6$  &  $7$  &  $8$\\ \hline
$|S^{g}|$  & $8$  &  $6$  &  $4$  &  $4$  &  $2$  &  $3$  &  $2$\\ \hline
 \end{tabular}
 \end{center} 
 \end{Thm}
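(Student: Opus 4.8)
The plan is to run the two Lefschetz fixed point formulas against the lattice‑theoretic constraints from Section~\ref{SUBSECT_lattice}. Write $n := \ord(g)$ and $L := H^2(S,\Z) \cong \Lambda$. First I would pin down the local structure of the fixed locus. Since $g$ is symplectic, $g^*\omega_S = \omega_S$, so $g^*$ is the identity on $H^{2,0}(S) = \C\omega_S$ and, by conjugation, on $H^{0,2}(S) \cong H^2(S,\mathcal{O}_S)$. At a fixed point $p$ the differential $dg_p$ preserves the value of $\omega_S$, hence lies in $\SL(T_pS) \cong \SL_2(\C)$; being of finite order it is diagonalizable with eigenvalues $\lambda_p, \lambda_p^{-1}$. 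The linearizability of a finite‑order holomorphic map near a fixed point, together with the connectedness of $S$, forces $dg_p$ to have order exactly $n$ (otherwise a nontrivial power of $g$ would be the identity near $p$, hence everywhere), so $\lambda_p$ is a primitive $n$-th root of unity; in particular $1$ is not an eigenvalue and every fixed point is isolated, so $S^g$ is finite.

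Next I would feed this into the holomorphic Lefschetz fixed point formula. Its left‑hand side is $\sum_{q}(-1)^q \Tr(g^*\mid H^q(S,\mathcal{O}_S)) = 1 - 0 + 1 = 2$, while each isolated fixed point contributes $((1-\lambda_p)(1-\lambda_p^{-1}))^{-1}$. Grouping the fixed points by local type $a$ (where $\lambda_p = \zeta_n^{a}$, $\gcd(a,n)=1$, taken modulo $a\mapsto -a$) and letting $m_a \ge 0$ be the number of fixed points of each type gives
\begin{equation*}
2 \;=\; \sum_{a} \frac{m_a}{\,2 - \zeta_n^{a} - \zeta_n^{-a}\,}, \qquad m_a \in \Z_{\ge 0},
\end{equation*}
with all summands positive; in particular $S^g \neq \emptyset$, since the right‑hand side must equal $2$. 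The total $|S^g| = \sum_a m_a$ can be cross‑checked against the topological Lefschetz formula $|S^g| = 2 + \Tr(g^* \mid H^2(S,\R))$.

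To bound the order I would turn to lattice theory. Averaging a Kähler class over $\langle g\rangle$ produces a $g$-invariant Kähler class, which together with $\mathrm{Re}\,\omega_S$ and $\mathrm{Im}\,\omega_S$ spans a positive‑definite $3$-space inside $L^g \otimes \R$; since $L$ has signature $(3,19)$, the coinvariant lattice $L_g$ is negative definite and $\rk L_g \le 19$. As $g$ acts on $L_g$ with no nonzero fixed vectors and with order $n$, its characteristic polynomial is divisible by the cyclotomic polynomial $\Phi_n$, so $\varphi(n) \le \rk L_g \le 19$. This leaves finitely many candidate orders, and a finite case analysis shows that for $n \ge 9$ the displayed holomorphic Lefschetz equation has no solution in non‑negative integers $m_a$; hence $n \le 8$. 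Finally, for each $n \in \{2,\dots,8\}$ I would solve that equation explicitly: in every case the non‑negative integer solution $(m_a)$ is unique, and $|S^g| = \sum_a m_a$ reproduces the entries of the table.

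I expect the main obstacle to be the order bound and the accompanying uniqueness, i.e.\ the Diophantine analysis of the holomorphic Lefschetz equation. The delicate point is that the constraint ``an $\R$‑linear combination of the algebraic numbers $(2-\zeta_n^{a}-\zeta_n^{-a})^{-1}$ equals the rational number $2$'' is extremely rigid; extracting from it the nonexistence of integral solutions for $9 \le n$ (within the finite range permitted by $\varphi(n)\le 19$) and exactly one solution for each $n \le 8$ is where the real work lies. By contrast, the local linearization and the two fixed point formulas form the conceptual engine and are comparatively routine.
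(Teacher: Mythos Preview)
The paper does not prove this statement; it is quoted from Nikulin \cite{Ni} as background and no argument is given. Your outline is essentially Nikulin's original proof and is sound: the holomorphic Lefschetz formula yields the Diophantine constraint $2=\sum_a m_a/(2-\zeta_n^{a}-\zeta_n^{-a})$, the coinvariant--lattice bound $\varphi(n)\le \rk L_g\le 19$ cuts the problem to finitely many $n$, and the remaining case analysis settles both the bound $n\le 8$ and the table. One point worth making explicit is the mechanism behind the ``rigidity'' you flag as the main obstacle: since the $m_a$ are integers and the left side is rational, the equation is stable under $\mathrm{Gal}(\Q(\zeta_n+\zeta_n^{-1})/\Q)$, which permutes the local types $a\in(\Z/n\Z)^\times/\{\pm 1\}$; hence the $m_a$ are constant on Galois orbits, and after imposing this the verification for each $n$ is a short computation (for instance $n=12$ forces $m_1=m_5$ and then $2=4m_1$ has no integer solution).
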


A fixed point free involution $\iota$ of a K3 surface (necessarily anti-symplectic) is called an Enriques involution. 
The quotient surface $S/\langle \iota \rangle$ is called an Enriques surface. 
\begin{Thm}[{\cite[Section VIII.19]{BHPV}}] \label{thm_enriques_involution}
An involution $\iota$ of a K3 surface $S$ is an Enriques involution if and only if 
$$
 H^2(S,\Z)^\iota\cong U(2)\oplus E_8(-2),\quad
 H^2(S,\Z)_\iota\cong U\oplus U(2)\oplus E_8(-2).
$$
\end{Thm}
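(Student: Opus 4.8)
The plan is to move between the geometric condition (fixed-point-freeness) and the lattice condition through the rank count coming from the Lefschetz fixed point formula, and then to pin down the isomorphism types of the two eigenlattices using the unimodularity of $\Lambda$ together with the discriminant-form calculus of Subsection \ref{SUBSECT_lattice}. Write $L=H^2(S,\Z)\cong\Lambda$, $L^+=L^\iota$ and $L^-=L_\iota=(L^+)^\perp$. First I would observe that in either direction $\iota$ is anti-symplectic: if $\iota^*\omega_S=\omega_S$ then $\omega_S\in L^+\otimes\C$, whereas the real span of $\mathrm{Re}\,\omega_S$ and $\mathrm{Im}\,\omega_S$ is a positive-definite plane, which is impossible once $L^+$ has at most one positive square. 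This last condition holds when $L^+\cong U(2)\oplus E_8(-2)$ (signature $(1,9)$), and in the geometric direction it holds because a symplectic involution has $8$ fixed points by Theorem \ref{Nik FixPoint} and so cannot be fixed-point-free. Hence $\iota^*\omega_S=-\omega_S$ and $L^+\subseteq\mathrm{NS}(S)$. By the topological Lefschetz fixed point formula, the Euler characteristic satisfies $\chi(S^\iota)=2+\Tr(\iota^*\mid H^2(S,\Z))=2+(\rk L^+-\rk L^-)$, and since $\rk L^++\rk L^-=22$ this becomes $\chi(S^\iota)=2\,\rk L^+-20$; thus $S^\iota=\emptyset$ forces $\rk L^+=10$, while $L^+\cong U(2)\oplus E_8(-2)$ gives $\rk L^+=10$ and $\chi(S^\iota)=0$. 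Finally, Lemma \ref{Involution} shows $L/(L^+\oplus L^-)\cong(\Z/2\Z)^n$, so both eigenlattices are $2$-elementary, and since $\Lambda$ is unimodular with $L^-=(L^+)^\perp$, Proposition \ref{Nik Disc form} gives $(A(L^-),q(L^-))\cong(A(L^+),-q(L^+))$.

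For the direction ``Enriques $\Rightarrow$ lattices'' I would use the Enriques quotient $\pi\colon S\to Y:=S/\langle\iota\rangle$. Since $\iota$ is fixed-point-free and anti-symplectic, $Y$ is an Enriques surface; its free cohomology $H^2(Y,\Z)/\Tors$ is even (by Wu's formula, as $K_Y$ is torsion) and unimodular (Poincar\'e duality modulo torsion) of signature $(1,9)$, hence isomorphic to $U\oplus E_8(-1)$. The pullback $\pi^*$ multiplies the form by $2$ and, via the Cartan--Leray spectral sequence (using $H^1(S,\Z)=0$), identifies $H^2(Y,\Z)/\Tors$ with $L^+$; therefore $L^+\cong\bigl(U\oplus E_8(-1)\bigr)(2)=U(2)\oplus E_8(-2)$. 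To recover $L^-$ I would argue by genus: it has signature $(2,10)$, and by the isomorphism $(A(L^-),q(L^-))\cong(A(L^+),-q(L^+))$ above together with the fact that the $2$-elementary form $q(L^+)$ takes values in $\Z/2\Z\subset\Q/2\Z$ (so that $-q(L^+)=q(L^+)$), it has the same discriminant form as $U\oplus U(2)\oplus E_8(-2)$. As the latter splits off a copy of $U=U(1)$, Theorem \ref{Nik Genus} shows its genus is a single class, whence $L^-\cong U\oplus U(2)\oplus E_8(-2)$.

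For the converse I would argue by deformation, using Torelli (Theorem \ref{Torelli}) and surjectivity of the period map (Theorem \ref{SurjPed}). Fixing a primitive embedding $L^+\cong U(2)\oplus E_8(-2)\hookrightarrow\Lambda$ with orthogonal complement $L^-$, the pairs $(S,\iota)$ realizing the isometry $\id_{L^+}\oplus(-\id_{L^-})$ are parametrized by periods $\omega_S$ in the domain $\{\omega\in L^-\otimes\C : \langle\omega,\omega\rangle=0,\ \langle\omega,\bar\omega\rangle>0\}$, which is connected after identifying the two components swapped by complex conjugation. Surjectivity produces such K3 surfaces, and Torelli produces the involution, where Lemma \ref{LEM_reflection} is used to move an invariant positive class into the K\"ahler cone; so all these pairs are deformation equivalent. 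The topological type of the fixed locus of an anti-symplectic involution is locally constant in such a family, so it suffices to exhibit one fixed-point-free member with these invariants. Enriques involutions exist classically, and by the direction just proved their invariant lattice is $U(2)\oplus E_8(-2)$; hence the given $\iota$ is deformation equivalent to a fixed-point-free one and is itself fixed-point-free.

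The main obstacle is the fixed-locus input in the converse: one must know that fixed-point-freeness is a deformation invariant within the family of anti-symplectic involutions with prescribed eigenlattices, equivalently that the discrete invariants of $L^+$ already determine the topology of $S^\iota$ — this is the content of Nikulin's analysis and is exactly what upgrades the necessary condition $\rk L^+=10$ to a sufficient one. A secondary technical point is the exact (not merely finite-index) identification $\pi^*\!\bigl(H^2(Y,\Z)/\Tors\bigr)=L^+$ in the forward direction, which rests on the Cartan--Leray spectral sequence for the free $\Z/2\Z$-action; once this is in hand, the passage from $L^+$ to $L^-$ is purely formal via Proposition \ref{Nik Disc form} and Theorem \ref{Nik Genus}.
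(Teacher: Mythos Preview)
The paper does not supply its own proof of this theorem: it is stated with the citation \cite[Section VIII.19]{BHPV} and used as a black box, so there is no in-paper argument to compare against.

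Your sketch is a reasonable reconstruction of the standard proof found in the cited reference and in Nikulin's work. The forward direction via the Enriques quotient and the genus computation for $L^-$ is exactly the classical route. For the converse, your deformation argument is correct in spirit, but note two points you should tighten. First, to conclude that \emph{all} involutions with $L^+\cong U(2)\oplus E_8(-2)$ lie in a single connected family you need uniqueness of the primitive embedding $U(2)\oplus E_8(-2)\hookrightarrow\Lambda$ up to $\mathrm{O}(\Lambda)$; this follows from Nikulin's results on $2$-elementary lattices but is not automatic from what you wrote. Second, the Lefschetz computation $\chi(S^\iota)=0$ alone does not force $S^\iota=\emptyset$, since a priori the fixed locus could be a disjoint union of smooth curves whose Euler characteristics cancel; you correctly flag that the passage from $\chi=0$ to emptiness is the substantive step, and your deformation-to-a-known-example strategy is one valid way around it (the other being to invoke Nikulin's $(r,a,\delta)$ classification directly, which gives $S^\iota=\emptyset$ precisely when $(r,a,\delta)=(10,10,0)$).
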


An example of a K3 surface with an Enriques involution we should keep in mind is: 

\begin{Ex}[{Horikawa model \cite[Section V.23]{BHPV}}] \label{K3->P1P1} 
The double covering $\pi\colon S \rightarrow \mathbb{P}^{1}\times \mathbb{P}^{1}$ branching along a smooth curve $B$ of bidegree $(4,4)$ is a K3 surface. 
We denote by $\theta$ the covering involution on $S$.  
Assume that $B$ is invariant under the involution $ \iota$ of $\PP^1 \times \PP^1$ given by $(x,y)\mapsto (-x,-y)$, where $x$ and $y$ are the inhomogeneous coordinates of $\PP^1\times \PP^1$.  
The involution $\iota$ lifts to a symplectic involution of the K3 surface $S$. 
Then $\theta \circ \iota$ is an involution of $S$ without fixed points unless $B$ passes through one of the four fixed points of $\iota$ on $\PP^1 \times \PP^1 $. 
The quotient surface $T=S/\langle \theta \circ \iota \rangle$ is therefore an Enriques surface. 
\[\xymatrix{
 \ar[d]_{/\langle \theta \rangle } S \ar[r]^{\id} & S  \ar[d]^{/ \langle \theta \circ \iota \rangle }\\
 \mathbb{P}^{1}\times \mathbb{P}^{1} & T \\
}\]
\end{Ex}

\begin{Prop}[{\cite[Proposition XIII.18.1]{BHPV}}] \label{PROP_horikawa}
Any generic K3 surface with an Enriques involution is realized as a Horikawa model defined above.
\end{Prop}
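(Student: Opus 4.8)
The plan is to read the Horikawa structure directly off the lattice $\mathrm{NS}(S)$ together with the action of the Enriques involution, which I will call $\sigma$. By Theorem~\ref{thm_enriques_involution} the invariant lattice is $H^2(S,\Z)^\sigma\cong U(2)\oplus E_8(-2)$, and since $\sigma$ is anti-symplectic this lattice lies inside $\mathrm{NS}(S)$; for a \emph{generic} pair $(S,\sigma)$ the period $\omega_S$ avoids the countably many hyperplanes of $H^2(S,\Z)_\sigma\otimes\C$ that would enlarge the Picard group, so that $\mathrm{NS}(S)=H^2(S,\Z)^\sigma\cong U(2)\oplus E_8(-2)$. A useful preliminary remark is that this lattice represents only multiples of $4$; in particular it contains no vector of square $-2$, so $S$ carries no $(-2)$-curves and its K\"ahler cone is a full component of the positive cone.

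First I would produce the double cover. Let $F_1,F_2$ be the standard basis of the $U(2)$-summand, so $F_1^2=F_2^2=0$ and $F_1\cdot F_2=2$. Since there are no $(-2)$-classes, I may take $F_1,F_2$ to be nef, and each is then a primitive isotropic nef class defining a genus-one fibration $\varphi_i\colon S\to\PP^1$ with base-point-free linear system. No curve is contracted by both $\varphi_1$ and $\varphi_2$, for such a curve would be orthogonal to $F_1$ and $F_2$, hence lie in the $E_8(-2)$-summand and have square $\le-4$, impossible for an irreducible curve on a K3. Therefore the product $\varphi=(\varphi_1,\varphi_2)\colon S\to\PP^1\times\PP^1$ is a finite morphism; its degree equals $F_1\cdot F_2=2$, so $\varphi$ is a double cover with covering involution $\theta$. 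Comparing canonical bundles forces the branch curve $B$ to be a smooth member of $|\mathcal{O}(4,4)|$, and $\theta$ is anti-symplectic because $\PP^1\times\PP^1$ carries no holomorphic $2$-form.

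Next I would recover the symplectic involution. As $\sigma$ fixes the classes $F_1$ and $F_2$, it preserves each pencil $|F_i|$ and hence descends to an automorphism $\bar\sigma=(\alpha,\beta)$ of $\PP^1\times\PP^1$ respecting the two rulings, with $\varphi\circ\sigma=\bar\sigma\circ\varphi$. A short deck-transformation computation shows that $\sigma\theta\sigma^{-1}$ is a nontrivial deck transformation of $\varphi$, hence equals $\theta$; thus $\sigma$ and $\theta$ commute and $\tau:=\sigma\theta=\theta\sigma$ is an involution. Since both $\sigma$ and $\theta$ are anti-symplectic we get $\tau^*\omega_S=\omega_S$, so $\tau$ is symplectic (with eight fixed points by Theorem~\ref{Nik FixPoint}), and $\tau$ covers the same $\bar\sigma$. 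It then remains to identify $\bar\sigma$ with the involution $\iota\colon(x,y)\mapsto(-x,-y)$ of Example~\ref{K3->P1P1}.

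The crucial step is to exploit that $\sigma$ is fixed-point free. Over any fixed point $p$ of $\bar\sigma$ the fibre $\varphi^{-1}(p)$ is $\sigma$-stable, and as $\sigma$ has no fixed points it must interchange the two points lying over $p$; this forces $p\notin B$, i.e. $\mathrm{Fix}(\bar\sigma)\cap B=\emptyset$. If one of $\alpha,\beta$ were trivial (or if $\bar\sigma=\mathrm{id}$, which would give $\sigma=\theta$), then $\mathrm{Fix}(\bar\sigma)$ would contain an entire fibre of one ruling, which the bidegree-$(4,4)$ curve $B$ necessarily meets in four points — a contradiction; hence $\alpha$ and $\beta$ are both nontrivial involutions. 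After a coordinate change on each factor I may assume $\bar\sigma=\iota$, whereupon $B$ is $\iota$-invariant (being $\theta$-, hence $\tau$-, hence $\bar\sigma$-stable) and avoids the four fixed points of $\iota$. Thus $\tau$ is the symplectic lift of $\iota$ and $\sigma=\theta\circ\tau$, exhibiting $(S,\sigma)$ as a Horikawa model. I expect the main obstacle to be exactly this last step: showing that fixed-point-freeness rigidifies the quotient involution $\bar\sigma$ into the standard $\iota$ while simultaneously forcing $B$ to be $\iota$-invariant and to avoid the four base points, which is what links the abstract lattice picture to the explicit geometry of Example~\ref{K3->P1P1}.
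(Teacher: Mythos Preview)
The paper does not supply its own proof of this proposition: it is quoted verbatim from \cite[Proposition~XIII.18.1]{BHPV} and no argument is given in the text. So there is nothing in the paper to compare your proof against directly.

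That said, your argument is correct and is essentially the standard one found in \cite{BHPV}. The logic is sound at each step: for generic $(S,\sigma)$ one has $\mathrm{NS}(S)=U(2)\oplus E_8(-2)$, this lattice contains no $(-2)$-vectors, so the two isotropic generators of $U(2)$ are nef and define elliptic pencils whose product is a finite degree-$2$ map to $\PP^1\times\PP^1$ branched along a smooth $(4,4)$-curve; the Enriques involution descends because it fixes both pencils, and fixed-point-freeness forces the induced involution on the base to be $(x,y)\mapsto(-x,-y)$ with $B$ avoiding its four fixed points. One very minor point you might make explicit: when you assert $F_1,F_2$ may be taken nef, you are using that $(F_1+F_2)^2=4>0$ so after a global sign change $F_1+F_2$ lies in the K\"ahler cone, whence $F_i\cdot(F_1+F_2)=2>0$ puts each $F_i$ in the nef boundary. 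Otherwise the proof is complete.
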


%


\section{Classification} 
The Bogomolov decomposition theorem \cite{Be1} implies that a Calabi--Yau threefold $X$ with infinite fundamental group admits an \'{e}tale Galois covering 
either by an abelian threefold or by the product of a K3 surface and an elliptic curve.   
We call $X$ of type A in the former case and of type K in the latter case. 
Among many candidates of such coverings, we can always find a unique smallest one, up to isomorphism as a covering \cite{Be2}. 
We call the smallest covering the minimal splitting covering of $X$. 
The goal of this section is to provide the following classification theorem of Calabi--Yau threefolds of type K:


\begin{Thm} \label{Main Thm}
There exist exactly eight Calabi--Yau threefolds of type K, up to deformation equivalence. 
The equivalence class is uniquely determined by the Galois group $G$ of the minimal splitting covering. 
Moreover, the Galois group is isomorphic to one of the following combinations of cyclic and dihedral groups
$$
C_{2},\ C_2 \times C_2, \ C_2 \times C_2 \times C_2, \ D_{6}, \ D_{8}, \ D_{10}, \ D_{12}, \ or \ C_2 \times D_8. 
$$
\end{Thm}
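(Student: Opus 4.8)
The plan is to reduce the classification to a lattice-theoretic problem about the induced $G$-action on $H^2(S,\Z)\cong\Lambda$, solve that problem, and then run the Torelli machinery to convert the lattice data back into deformation classes. First I would unwind the notion of a Calabi--Yau action (Definition \ref{def_cyg_k3}): writing the minimal splitting covering as $S\times E\to X$, the group $G$ acts freely on $S\times E$ compatibly with both projections, and the action on the holomorphic volume form $\omega_S\wedge dz$ defines a character $\alpha\colon G\to\{\pm1\}$ recording whether each $g$ is symplectic or anti-symplectic on $S$. The Calabi--Yau conditions (trivial canonical bundle and $H^1(X,\mathcal{O}_X)=0$) force $\alpha$ to be surjective, since $H^{0,1}(X)$ is the $G$-invariant part of $H^{0,1}(E)=\overline{\C\,dz}$.

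The second step is to pin down the abstract structure of $G$. Set $G_0:=\ker\alpha$. Each nontrivial $g\in G_0$ is a finite-order symplectic automorphism of $S$, hence has fixed points by Theorem \ref{Nik FixPoint}; freeness of the action on $S\times E$ then forces $g$ to act on $E$ as a fixed-point-free translation, so $G_0$ embeds into the translation group of $E$. In particular $G_0$ is abelian of rank at most $2$. For $E$ generic in its deformation class one has $\Aut(E,0)=\{\pm1\}$, so every $g\in G\setminus G_0$ acts on $E$ as $z\mapsto -z+c$, an involution; comparing with $g^2\in G_0$ (which would act as a translation) forces $g^2=1$. Thus every element outside $G_0$ is an involution, and since $(th)^2=1$ gives $tht^{-1}=h^{-1}$, any such $t$ inverts $G_0$ by conjugation, identifying $G$ with the generalized dihedral group $\mathrm{Dih}(G_0)=G_0\rtimes C_2$.

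The geometric heart, which I expect to be the main obstacle, is to decide exactly which abelian $G_0$ occur. The constraints are: (i) $G_0$ must act symplectically on some K3 surface, so its coinvariant lattice $\Lambda_{G_0}$ lies on Nikulin's list; and (ii) freeness forces every element of $G\setminus G_0$—that is, $t$ and every $tg$ with $g\in G_0$—to be a \emph{fixed-point-free} anti-symplectic involution, i.e.\ an Enriques involution, with invariant and coinvariant lattices prescribed by Theorem \ref{thm_enriques_involution}. Reconciling (i) and (ii) is a delicate compatibility problem on $\Lambda$: one must produce a $G$-action whose restriction to $G_0$ realizes the prescribed symplectic coinvariant lattice while simultaneously every involution in the nontrivial coset has invariant lattice $U(2)\oplus E_8(-2)$ and empty fixed locus. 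This is precisely the content of the Key Lemma (Lemma \ref{LEM_keylemma}), proved in Section 4; it is carried out by a discriminant-form analysis, matching $(A(\Lambda^{G_0}),q)$ against the Enriques data via Propositions \ref{Nik Isotropic} and \ref{Nik Disc form} and controlling genera through Theorem \ref{Nik Genus}. The upshot is that (i) and (ii) are simultaneously satisfiable exactly for $G_0\in\{1,\,C_2,\,C_2\times C_2,\,C_3,\,C_4,\,C_5,\,C_6,\,C_2\times C_4\}$, whose generalized dihedral groups are the eight groups in the statement.

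Finally I would establish existence and uniqueness up to deformation for each surviving $G$. For existence, the lattice datum produced above satisfies the hypotheses of the surjectivity of the period map (Theorem \ref{SurjPed}), giving a K3 surface $S$ with the desired $G$-action—concretely realizable through Horikawa models (Example \ref{K3->P1P1}, Proposition \ref{PROP_horikawa}) carrying extra symplectic symmetry—together with an elliptic curve $E$ admitting $G_0$ as translations; the quotient $(S\times E)/G$ is then a Calabi--Yau threefold of type K whose minimal splitting covering has Galois group $G$. For uniqueness, Lemma \ref{LEM_reflection} lets one move a chosen polarization into the Kähler cone $G$-equivariantly, after which the Global Torelli Theorem \ref{Torelli} shows that the moduli of such $(S,G)$ is governed by the period domain attached to the single lattice class furnished by Theorem \ref{Nik Genus}; connectedness of that domain makes all resulting threefolds deformation equivalent. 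Since $G$ is recovered intrinsically as the Galois group of the minimal splitting covering, it is a deformation invariant, and the eight groups yield exactly eight distinct deformation classes.
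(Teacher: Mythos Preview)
Your outline tracks the paper's broad architecture (reduce to K3 data, then use Torelli), but there is a genuine gap in the middle: you have misread what the Key Lemma actually says and therefore misplaced the step that eliminates $(C_3\times C_3)\rtimes C_2$ and the step that proves irreducibility of the moduli.

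Lemma \ref{LEM_keylemma} does \emph{not} decide which $G_0$ are lattice-compatible with the Enriques condition. Its statement is purely numerical: for \emph{every} K3 surface $S$ carrying a Calabi--Yau $G$-action, there exists $v\in\mathrm{NS}(S)^G$ with $v^2=4$. In fact the proof in Section \ref{Key Lemma Section} carries out the discriminant-form computation for all nine candidate groups including $H=C_3\times C_3$, and in that case concludes $\Lambda^G\cong U(2)$, which is perfectly compatible with (i) and (ii). So the lattice ``compatibility problem'' you describe does \emph{not} exclude $C_3\times C_3$; your claimed upshot is unsupported.

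The actual mechanism is geometric. The degree-$4$ class from the Key Lemma, via Lemma \ref{LEM_doublequadric} and Proposition \ref{PROP_genericity}, forces every such $(S,G)$ generically to be a $G$-equivariant double cover of $\PP^1\times\PP^1$ branched along a $(4,4)$-curve. Proposition \ref{PROP_triplet} then classifies all such Horikawa models; it is there---because the invariant polynomial $F$ is forced to be divisible by $x^2y^2$---that $H=C_3\times C_3$ dies (Theorem \ref{non-existence C3^2}). Likewise, the irreducibility of the moduli for each surviving $G$ is not obtained by proving directly that the $G$-lattice action is unique up to $\mathrm{O}(\Lambda)$-conjugacy; rather, Theorem \ref{THM_uniqueness} deduces connectedness of $\mathcal{M}^G/\Aut_H(G)$ from the explicit Horikawa parameter spaces (Propositions \ref{PROP_triplet} and \ref{PROP_222section}, the latter needed because $H=C_2$ and $H=C_4$ each produce two Horikawa types that must be linked through $(2,2,2)$-surfaces in $(\PP^1)^3$). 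Your Torelli sketch is fine once connectedness is known, but you have not supplied a mechanism for connectedness: invoking Theorem \ref{Nik Genus} gives uniqueness of $\Lambda^G$ in its genus, not uniqueness of the full $G$-action on $\Lambda$ up to conjugacy.
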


We will also give an explicit presentation for the eight Calabi--Yau threefolds. 
For the reader's convenience, we outline the proof of Theorem \ref{Main Thm}. 
Firstly, by the work of Oguiso and Sakurai \cite{OS}, the classification of Calabi--Yau threefolds of type K essentially reduces to 
that of K3 surfaces $S$ equipped with a Calabi--Yau action (Definition \ref{def_cyg_k3}) of a finite group of the form $G=H\rtimes \LF{\iota}$.
Here the action of $H$ on $S$ is symplectic and $\iota$ is an Enriques involution.
A sketch of the proof of the classification of such K3 surfaces is the following: 
\begin{enumerate}
\item We make a list of examples of K3 surfaces $S$ with a Calabi--Yau $G$-action. 
They are given as double coverings of $\mathbb{P}^1\times \mathbb{P}^1$ ($H$-equivariant Horikawa models).  
\item For a K3 surface S with a Calabi--Yau $G$-action, it is proven that there exists an element $v\in \mathrm{NS}(S)^G$ such that $v^2=4$ (Key Lemma). 
\item It is shown that $S$ has a projective model of degree $4$ and admits a $G$-equivariant double covering of a quadric hypersurface in $\PP^3$ isomorphic to $\PP^1\times\PP^1$ if it is smooth. 
Therefore, $S$ is generically realized as an $H$-equivariant Horikawa model constructed in Step 1.  
\item We classify the deformation equivalence classes of $S$ on a case-by-case basis and also exclude an unrealizable Galois group. 
\end{enumerate}
It is worth noting that a realization of a K3 surface $S$ with a Calabi--Yau $G$-action as a Horikawa model is in general not unique (Propositions \ref{PROP_triplet} and \ref{PROP_222section}).  

\subsection{Work of Oguiso and Sakurai}
We begin with a brief review of Oguiso and Sakurai's work \cite{OS}. 
Let $X$ be a Calabi--Yau threefold of type K.  
Then the minimal splitting covering $\pi\colon S \times E \rightarrow X$ is obtained by imposing the condition 
that the Galois group of the covering $\pi$ does not contain any elements of the form $(\id_{S},\text{non-zero translation of }E)$. 

\begin{Def}
We call a finite group $G$ a Calabi--Yau group if there exist a K3 surface $S$, an elliptic curve $E$ 
and a faithful $G$-action on $S\times E$ such that the following conditions hold.
\begin{enumerate}
\item $G$ contains no elements of the form $(\id_{S},\text{non-zero translation of }E)$.
\item The $G$-action on $H^{3,0}(S\times E)\cong \C$ is trivial.
\item The $G$-action is free, that is, $(S\times E)^g=\emptyset$ for all $g \in G$, $g\neq 1$.
\item $G$ does not preserve any holomorphic 1-form, that is, $H^{0}(S\times E,\varOmega_{S\times E})^{G}=0$.
\end{enumerate}
We call $S\times E$ a target threefold of $G$. 
\end{Def}

The Galois group $G$ of the minimal splitting covering $S\times E\rightarrow X$ of a Calabi--Yau threefold $X$ of type K is a Calabi--Yau group.
Conversely, if $G$ is a Calabi--Yau group with a target space $S\times E$ of $G$, then the quotient $(S\times E)/G$ is a Calabi--Yau threefold of type K. 

Let $G$ be a Calabi--Yau group and $S\times E$ a target threefold of $G$. 
Thanks to a result of Beauville \cite{Be2}, we have a canonical isomorphism $\Aut(S\times E)\cong \Aut(S)\times \Aut(E)$. 
The images of $G\subset \Aut(S\times E)$ under the two projections to $\Aut(S)$ and $\Aut(E)$ are denoted by $G_{S}$ and $G_{E}$ respectively.  
It can be proven that $G_{S}\cong G \cong G_{E}$ via the natural projections:
\[\xymatrix{
\Aut(S) \ar@{}[d]|\bigcup & \ar[l]_{p_1}\Aut(S \times E) \ar@{}[d]|\bigcup \ar[r]^{p_2} & \Aut(E) \ar@{}[d]|\bigcup \\
G_S & \ar[l]^{p_1|_{G}}_{\cong} G \ar[r]_{p_2|_{G}}^{\cong} &  G_E. 
}\]
We denote by $g_S$ and $g_E$ the elements in $G_S$ and $G_E$ respectively corresponding to $g \in G$, 
that is, $p_1(g)=g_S$, $p_2(g)=g_E$. 

\begin{Prop}[{\cite[Lemma 2.28]{OS}}] \label{PROP_OS}
Let $G$ be a Calabi--Yau group and $S\times E$ a target threefold of $G$. 
Define $H:=\Ker(G\rightarrow \GL(H^{2,0}(S)))$ and take any $\iota \in G \setminus H$. 
Then the following hold.
\begin{enumerate}
\item $\ord(\iota)=2$ and $G=H\rtimes \langle \iota \rangle$, where the semi-direct product structure is given by $\iota h\iota=h^{-1}$ for all $h\in H$.
\item $g_S$ is an Enriques involution
for any $g\in G\setminus H$.
\item $\iota_{E}=-\id_{E}$ and $H_{E}=\langle t_{a}\rangle \times \langle t_{b}\rangle\cong C_{n}\times C_{m}$ under an appropriate origin of $E$, 
where $t_a$ and $t_b$ are translations of order $n$ and $m$ respectively such that $n|m$. 
Moreover we have $(n,m)\in \{(1,k) \, (1\le k \le 6), \ (2,2), \ (2,4), \ (3,3)\}$. 
\end{enumerate}
\end{Prop}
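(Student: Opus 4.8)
The plan is to analyze the Calabi--Yau group $G$ by studying its two projections $G_S$ and $G_E$ separately and then reconciling the constraints. Since Beauville's result gives $\Aut(S\times E)\cong\Aut(S)\times\Aut(E)$ and the projections restrict to isomorphisms $G_S\cong G\cong G_E$, every element $g$ is determined by the pair $(g_S,g_E)$. The subgroup $H=\Ker(G\to\GL(H^{2,0}(S)))$ consists precisely of those $g$ acting trivially on $\omega_S$, i.e. whose $S$-component is \emph{symplectic}; since $\GL(H^{2,0}(S))\cong\C^*$ and $G$ is finite, the image lands in a finite (hence cyclic) subgroup of $\C^*$, and triviality of the $G$-action on $H^{3,0}(S\times E)\cong H^{2,0}(S)\otimes H^{1,0}(E)$ forces the action on $H^{1,0}(E)$ to be the inverse of that on $H^{2,0}(S)$. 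This is the key linkage between the two factors.

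For part (1), I would first show $H$ is a normal subgroup of index $1$ or $2$: the character $G\to\C^*$, $g\mapsto(g_S^*\,\text{on }\omega_S)$ has image a finite cyclic group, and I claim the image has order at most $2$. An element $g\notin H$ acts on $\omega_S$ by a root of unity $\zeta\neq1$; by the fixed-point-freeness hypothesis one analyzes $(S\times E)^g=S^{g_S}\times E^{g_E}$, so freeness forces $S^{g_S}=\emptyset$ or $E^{g_E}=\emptyset$. A symplectic automorphism of finite order always has fixed points by Theorem \ref{Nik FixPoint}, so any element with $E^{g_E}\neq\emptyset$ whose $S$-part is symplectic would fix a point; pushing this through, I expect the non-symplectic part to be constrained to an anti-symplectic involution. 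Concretely, for $g\notin H$ one shows $g_S$ must be anti-symplectic with $g_S^*\omega_S=-\omega_S$ and be an \emph{Enriques involution} (fixed-point-free of order $2$): fixed-point-freeness of $g$ on $S\times E$ combined with $g^2\in H$ acting symplectically forces $g^2=1$, whence $\ord(\iota)=2$ and the image character has order $2$, giving $G=H\rtimes\langle\iota\rangle$. The relation $\iota h\iota=h^{-1}$ then follows because conjugation by $\iota$ on $H_E$ is conjugation by $-\id_E$, which inverts every translation.

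For part (2), the statement that $g_S$ is an Enriques involution for every $g\in G\setminus H$ is essentially the fixed-point-free plus anti-symplectic involution analysis above: an anti-symplectic involution of a K3 surface with no fixed points is by definition an Enriques involution, and freeness of $g$ on $S\times E$ together with $g_E$ being a fixed-point-free isometry of $E$ (or $g_S$ itself being fixed-point-free) delivers this. For part (3), I would use that $\iota_E$ is the image of an order-$2$ element and acts on $H^{1,0}(E)$ nontrivially (by the character computation), forcing $\iota_E=-\id_E$ after translating the origin; and $H_E$, being the image of the abelian-on-$E$ part acting trivially on $H^{1,0}(E)$, consists of \emph{translations}, hence is a finite subgroup of $E$ of the form $C_n\times C_m$ with $n\mid m$. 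The final enumeration $(n,m)\in\{(1,k),(2,2),(2,4),(3,3)\}$ is where the real work lies: this requires combining the order bound $\ord(h_S)\leq8$ for symplectic automorphisms (Theorem \ref{Nik FixPoint}), the freeness of the $G$-action, and the compatibility between the translation subgroup structure on $E$ and the realizable symplectic actions on the K3 side.

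I expect the main obstacle to be this last enumeration in part (3). Bounding $\ord(h_S)\leq8$ is immediate from Theorem \ref{Nik FixPoint}, but ruling out the remaining translation lattices $C_n\times C_m$ requires a delicate fixed-point count: one must verify that for each candidate $(n,m)$ there exists a compatible free action, which means matching the possible fixed loci $S^{h_S}$ (governed by the table in Theorem \ref{Nik FixPoint}) against the translation-free condition on $E$ so that $S^{h_S}\times E^{h_E}=\emptyset$ holds for all nontrivial $h$. This is a genuinely case-intensive argument, and since the statement is quoted from \cite[Lemma 2.28]{OS}, I would lean on their detailed verification for the arithmetic of which pairs survive, while reconstructing the structural parts (1) and (2) directly from the fixed-point-free and Hodge-triviality hypotheses as sketched above.
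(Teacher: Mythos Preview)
Your overall architecture is right, but there is a genuine gap in the argument for part (1), and you have the roles of $S$ and $E$ slightly reversed in part (2).

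For $g\in G\setminus H$ you need $S^{g_S}=\emptyset$, and the reason is that $g_E$ \emph{always has} a fixed point: since $g_E^*dz=\zeta^{-1}dz$ with $\zeta\neq 1$, the map $g_E$ is $z\mapsto \zeta^{-1}z+c$, which fixes $z=c/(1-\zeta^{-1})$. (You wrote ``$g_E$ being a fixed-point-free isometry of $E$'', which is backwards.) So freeness on $S\times E$ forces $g_S$ fixed-point-free. The step you are missing is that a fixed-point-free automorphism of finite order on a K3 surface is automatically an involution: the quotient $S/\langle g_S\rangle$ is a smooth surface with torsion canonical class and Euler number $24/\ord(g_S)$, and by the Enriques--Kodaira classification the only possibility with $0<\chi<24$ is an Enriques surface, forcing $\ord(g_S)=2$. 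Since $G\cong G_S$, this gives $\ord(\iota)=2$ and simultaneously $\zeta=-1$. Your proposed route ``$g^2\in H$ acts symplectically, hence $g^2=1$'' does not work: $(g^2)_S$ may well have fixed points while $(g^2)_E$ is a nontrivial translation, making $g^2$ act freely on $S\times E$ without being the identity.

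On part (3): the paper cites \cite{OS} for the bulk of the proposition but does \emph{not} defer the full enumeration there; it explicitly supplies the exclusion of $(n,m)=(1,7),(1,8),(2,6),(4,4)$, correcting an error in \cite{OS} (which also wrongly excluded $(2,4)$). The method is a fixed-point count on $S$: for each candidate $(n,m)$ one lets a suitable quotient of $G$ act on a carefully chosen finite fixed-point set $S^{h}$ or $S^{h}\setminus S^{h'}$ (with cardinalities read off Theorem~\ref{Nik FixPoint}), and then uses either that $G\setminus H$ acts freely on $S$ or the classification of finite subgroups of $\SL(2,\C)$ at a fixed point to reach a contradiction. Since you intend to ``lean on \cite{OS}'' here, you would inherit their error; the paper's case analysis is precisely the content you need to reproduce.
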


Although the case $(n,m)=(2,4)$ is eliminated from the list of possible Calabi--Yau groups in \cite{OS}, 
there is an error in the proof of Lemma 2.29 in \cite{OS}, which is used to prove the proposition above\footnote{
The error in \cite[Lemma 2.29]{OS} is that, with their notation, the group $\langle \alpha, h_S^2\rangle$ is not necessarily isomorphic to either $C_2\times C_2$ or $C_2 \times C_4$, 
but may be isomorphic to $C_4$.}. 
In fact, there exists a Calabi--Yau group of the form $(C_{2}\times C_{4})\rtimes C_2 \cong C_{2}\times D_{8}$ (Proposition \ref{PROP_triplet}). 
For the sake of completeness, here we settle the proof of Lemma 2.29 in \cite{OS}. 
We do not repeat the whole argument but give a proof of the non-trivial part: $(n,m)$ cannot be $(1,7),(1,8),(2,6)$ nor $(4,4)$. 
The reader can skip this part, assuming Proposition \ref{PROP_OS}. 
\begin{proof}[Proof of $(n,m)\ne(1,7),(1,8),(2,6),(4,4)$] \
\begin{enumerate}
\item $(1,7)$ : 
Let $H_{S}=\langle g\rangle \cong C_{7}$. 
Since $\iota g \iota=g^{-1}$, $\langle \iota \rangle \cong C_{2}$ acts on $S^{g}$, which has cardinality $3$ and thus has a fixed point. 
But this contradicts with the fact that $S^{f}=\emptyset$ for any $f \in G_{S}\setminus H_{S}$.  

\item $(1,8)$ : 
Let $H_{S}=\langle g\rangle \cong C_{8}$. 
Note that $\langle g, \iota \rangle/\langle g^{2}\rangle \cong C_{2}\times C_2$ and acts on $S^{g^{2}}\setminus S^{g}$ which has cardinality $4-2=2$. 
Then this action induces a homomorphism $\phi \colon C_{2} \times C_2\rightarrow S_{2}$. 
Since $S^{f}=\emptyset$ for all $f \in G_{S}\setminus H_{S}$, 
$\Ker(\phi) (\ne 1)\subset \langle g\rangle /\langle g^{2}\rangle \cong C_{2}$ and thus $\Ker(\phi)\cong \langle g\rangle /\langle g^{2}\rangle$. 
This contradicts with our subtracting $S^{g}$ from $S^{g^{2}}$. 

\item $(2,6)$ : 
Let $H_{S}=\langle g\rangle \times \langle h\rangle \cong C_{2}\times C_{6}$. 
$|S^{h}|=2$ and thus there is a homomorphism $\phi\colon  \langle g, \iota \rangle \cong C_{2} \times C_2\rightarrow S_{2}$. 
Since $S^{f}=\emptyset$ for all $f \in G_{S}\setminus H_{S}$, $\Ker(\phi)=\langle g \rangle \cong C_{2}$. 
Let $p \in S^{g}$ be one of the fixed points. Then we have a faithful representation $H_{S}=C_{2}\times C_{6}\rightarrow \SL(T_{p}S) \cong \SL(2,\C)$. 
This contradicts with the classification of finite subgroups in $\SL(2,\C)$. 

\item $(4,4)$ : 
Let $H_{S}=\langle g\rangle \times \langle h\rangle \cong C_{4} \times C_4$. 
Note that $\langle g,h,\iota \rangle/\langle g^{2},h^{2}\rangle \cong C_{2}\times C_2 \times C_2$ and acts on $S^{h^{2}}$. 
Note also that $|S^{h^{2}}\setminus S^{h}|=8-4=4$. 
Then this induces a homomorphism $\phi\colon C_{2} \times C_2 \times C_2\rightarrow S_{4}$. 
Since $S_{4}$ does not contain $C_{2} \times C_2 \times C_2$, $\Ker(\phi)\subset \langle g,h\rangle/\langle g^{2},h^{2}\rangle$ is not trivial. 
Let $\alpha$ be a lift of a non-trivial element of $\Ker(\phi)$ and take a fixed point $p \in S^{h^{2}}\setminus S^{h}$. 
Then we have a natural injection $\langle \alpha, h^{2}\rangle \rightarrow \SL(T_{p}S) \cong \SL(2,\C)$. 
In addition using $h\notin \Ker(\phi)$, we obtain $\langle \alpha, h^{2}\rangle \cong C_{4}\times C_{2}$, 
which contradicts with the classification of finite subgroups in $\SL(2,\C)$. 
\end{enumerate}
\end{proof}

Now we can state a main result of \cite{OS} with a slight correction. 

\begin{Thm}[{\cite[Theorem 2.23]{OS}}]
Let $X$ be a Calabi--Yau threefold of type $K$. 
Let $S\times E\rightarrow X$ be the minimal splitting covering and $G$ its Galois group. 
Then the following hold. 
\begin{enumerate}
\item $G$ is isomorphic to one of the following: 
$$
C_{2}, \ C_2 \times C_2, \ C_2 \times C_2 \times C_2 , \
D_{6}, \ D_8, \ D_{10}, \ D_{12}, \ C_{2}\times D_{8}, \ \text{or} \ (C_{3}\times C_3)\rtimes C_{2}.
$$ 
\item In each case the Picard number $\rho(X)$ of $X$ is uniquely determined by $G$ and is calculated as $\rho(X)=11,7,5,5,4,3,3,3,3$ respectively. 
\item The cases $G\cong C_{2}, \ C_2 \times C_2, \ C_2 \times C_2 \times C_2 $ really occur. 
\end{enumerate}
\end{Thm}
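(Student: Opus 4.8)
The plan is to deduce the three assertions from Proposition~\ref{PROP_OS}, the Lefschetz fixed point formula, and explicit constructions, in that order. For (1), Proposition~\ref{PROP_OS} already gives $G=H\rtimes\langle\iota\rangle$ with $H\cong H_E\cong C_n\times C_m$, where $\iota$ acts on $H$ by $h\mapsto h^{-1}$ and $(n,m)$ ranges over the nine pairs $(1,k)$ $(1\le k\le 6)$, $(2,2)$, $(2,4)$, $(3,3)$. It remains only to name each semidirect product under the inversion action. Since inversion is trivial on a factor $C_2$ and non-trivial on a cyclic factor of order $\ge 3$, the pairs $(1,1),(1,2),(2,2)$ give the abelian groups $C_2,\,C_2\times C_2,\,C_2\times C_2\times C_2$; the pairs $(1,3),(1,4),(1,5),(1,6)$ give $D_6,D_8,D_{10},D_{12}$; the pair $(2,4)$ gives $C_2\times(C_4\rtimes C_2)\cong C_2\times D_8$; and $(3,3)$ gives $(C_3\times C_3)\rtimes C_2$. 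This is elementary group theory.

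For (2), I would first reduce $\rho(X)$ to a trace computation on $S$. As the $G$-action on $S\times E$ is free, $H^\bullet(X,\Q)\cong H^\bullet(S\times E,\Q)^G$; K\"unneth, together with $H^1(S)=0$ and $\rho(E)=1$, gives $b_2(X)=\dim H^2(S,\Q)^{G_S}+1$. Moreover $H_S$ acts trivially and $\iota_S$ acts by $-1$ on $H^{2,0}(S)$, so $H^{2,0}(S\times E)^G=0$; hence $h^{2,0}(X)=0$ and $\rho(X)=b_2(X)=\dim H^2(S,\Q)^{G_S}+1$. Now $\dim H^2(S,\Q)^{G_S}=\frac{1}{|G_S|}\sum_{g\in G_S}\operatorname{tr}(g^*|H^2)$, and the Lefschetz fixed point formula on a K3 surface reads $\operatorname{tr}(g^*|H^2)=\chi(S^g)-2$. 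For $h\in H_S$ the automorphism $h_S$ is symplectic with isolated fixed points, so $\chi(S^h)=|S^h|$ is given by Theorem~\ref{Nik FixPoint} according to $\operatorname{ord}(h)$; for $g\in G_S\setminus H_S$ the automorphism $g_S$ is a fixed-point-free Enriques involution (Proposition~\ref{PROP_OS}), so $\chi(S^g)=0$ and the trace is $-2$. Since each of the $|H|$ elements of $G_S\setminus H_S$ contributes trace $-2$ while $\frac{1}{|H|}\sum_{h\in H_S}\operatorname{tr}(h^*|H^2)=\dim H^2(S,\Q)^{H_S}$, we obtain $\dim H^2(S,\Q)^{G_S}=\tfrac12\big(\dim H^2(S,\Q)^{H_S}-2\big)$, i.e. $\rho(X)=\tfrac12\dim H^2(S,\Q)^{H_S}$. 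The right-hand side depends only on the number of elements of each order in $H\cong C_n\times C_m$, hence only on $G$; for instance $H=1$ gives $22$ and $\rho=11$, while a single symplectic involution gives $\tfrac12(22+(8-2))=14$ and $\rho=7$. Running all nine cases reproduces $\rho(X)=11,7,5,5,4,3,3,3,3$.

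For (3), I would exhibit target threefolds for $C_2$, $C_2\times C_2$, $C_2\times C_2\times C_2$ directly. Take a K3 surface $S$ carrying a symplectic action of $H_S\in\{1,\,C_2,\,C_2\times C_2\}$ together with a \emph{commuting} Enriques involution $\iota_S$, realised as an $H$-equivariant Horikawa model, i.e. a double cover of $\PP^1\times\PP^1$ branched along a symmetric $(4,4)$-curve (Example~\ref{K3->P1P1}, Proposition~\ref{PROP_horikawa}), together with an elliptic curve $E$ on which $H$ acts by the corresponding translations and $\iota$ acts by $-\id$. Let $G$ act on $S\times E$ by $g\cdot(s,e)=(g_S(s),g_E(e))$. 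The four conditions defining a Calabi--Yau group are then checked: condition~(1) holds because $G\cong G_S$; conditions~(2) and~(4) follow from $\iota_E=-\id$ together with the (anti)symplectic behaviour on $H^{2,0}(S)$ and $H^{1,0}(E)$; and freeness holds because for $h\in H\setminus\{1\}$ the translation $h_E$ is fixed-point-free, while for $g\in G\setminus H$ the involution $g_S$ is fixed-point-free. Thus each of the three groups occurs.

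The main obstacle is (3). Granting Proposition~\ref{PROP_OS} and Theorem~\ref{Nik FixPoint}, parts (1) and (2) are formal. The real work is to produce a single K3 surface carrying a $C_2\times C_2$ symplectic action together with a commuting Enriques involution, and to choose the branch $(4,4)$-curve so that every involution in the coset $G_S\setminus H_S$ stays fixed-point-free (equivalently, so that the curve avoids the relevant fixed points of the covering data). This existence-with-genericity step is the crux, and is exactly the $H$-equivariant Horikawa model construction of Step~1 of the outline.
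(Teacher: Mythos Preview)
Your argument is correct. Note first that the paper does not give an independent proof of this theorem: it is quoted from \cite{OS}, with the correction that $(n,m)=(2,4)$ must be reinstated (yielding $C_2\times D_8$) and with the missing exclusions $(n,m)\neq(1,7),(1,8),(2,6),(4,4)$ supplied just before the statement. The pieces you assemble do, however, appear elsewhere in the paper.

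For part~(2) your route differs from the paper's. In Lemma~\ref{Rank} the identity $\rk H^2(S,\Z)^G=\tfrac{r}{2}-1$ (with $r=\rk H^2(S,\Z)^H$) is obtained by computing both $h^{1,1}(X)$ and $h^{2,1}(X)$ via K\"unneth and then invoking $e(X)=e(S\times E)/|G|=0$ to force $h^{1,1}(X)=h^{2,1}(X)$; the values of $r$ are then read from Nikulin's rank table (reproduced at the start of Section~\ref{Key Lemma Section}). You instead compute $\dim H^2(S,\Q)^G$ directly by character averaging, feeding in the traces via the Lefschetz formula and Theorem~\ref{Nik FixPoint}. Both give $\rho(X)=r/2$, but your argument is more self-contained: it never touches $h^{2,1}$ and does not need Nikulin's table as an external input, since the fixed-point counts already determine $r$. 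For part~(3) your sketch is exactly Proposition~\ref{PROP_triplet}: the $H$-equivariant Horikawa models together with the fixed-point-freeness condition~(\ref{ITEM_nofixed}) on the branch curve, whose generic satisfiability is checked at the end of that proof.
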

It has not been settled yet whether or not there exist Calabi--Yau threefolds of type K 
with Galois group $G\cong D_{2n}\ (3\le n\le 6)$, $C_{2}\times D_{8}$ or $(C_{3}\times C_3)\rtimes C_{2}$. 
Note that the example of a Calabi--Yau threefold of type K with $G\cong D_{8}$ presented in Proposition 2.33 in \cite{OS} is incorrect\footnote{
The error in \cite[Proposition 2.33]{OS} is that, with their notation, $S^{ab}\ne \emptyset$.}. 
We will settle this classification problem of Calabi--Yau threefolds of type K and also give an explicit presentation of the deformation classes. 

\begin{Ex}[Enriques Calabi--Yau threefold] \label{Enr CY3}
Let $S$ be a K3 surface with an Enriques involution $\iota$ and $E$ an elliptic curve with the negation $-1_E$.  
The free quotient 
$$
X:=(S\times E)/\langle (\iota,-1_E) \rangle
$$
is the simplest Calabi--Yau threefold of type K, known as the Enriques Calabi--Yau threefold.
\end{Ex}


\subsection{Construction} \label{Construction}
The goal of this section is to make a list of concrete examples of Calabi--Yau threefolds of type K. 
We will later show that the list covers all the generic Calabi--Yau threefolds of type K. 
We begin with the definition of  {\it Calabi--Yau actions}, which is based on Proposition \ref{PROP_OS}. 

\begin{Def} \label{def_cyg_k3}
Let $G$ be a finite group. 
We say that an action of $G$ on a K3 surface $S$ is a Calabi--Yau action if the following hold.
\begin{enumerate}
\item $G=H\rtimes \langle \iota \rangle$ for some $H\cong C_n\times C_m$ with $(n,m)\in \{(1,k)\ (1\le k \le 6), \ (2,2), \ (2,4), \ (3,3)\}$, and $\iota$ with $\ord(\iota)=2$.  
The semi-direct product structure is given by $\iota h\iota=h^{-1}$ for all $h\in H$.
\item $H$ acts on $S$ symplectically and any $g\in G\setminus H$ acts on $S$ as an Enriques involution.
\end{enumerate}
\end{Def}

Recall that a generic K3 surface with the simplest Calabi--Yau action, namely an Enriques involution,
is realized as a Horikawa model (Proposition \ref{PROP_horikawa}).
We will see that a K3 surface equipped with a Calabi--Yau $G$-action is realized as an $H$-equivariant Horikawa model (Proposition \ref{PROP_triplet}). 

We can construct a Calabi--Yau $G$-action on a K3 surface as follows.
Let $u,x,y,z,w$ be affine coordinates of $\C\times\C^2\times\C^2$.
Define $L$ by
$$
 L:=\left(\C\times(\C^2\setminus \{0\})\times(\C^2\setminus \{0\})\right)/(\C^\times)^2,
$$
 where the action of $(\mu_1,\mu_2)\in(\C^\times)^2$ is given by
$$
 (u,x,y,z,w)\mapsto
 (\mu_1^2 \mu_2^2 u,\mu_1 x,\mu_1 y,\mu_2 z,\mu_2 w).
$$
The projection $\C\times\C^2\times\C^2\rightarrow \C^2\times\C^2$ descends to the map
\begin{equation*}
 \pi \colon L\rightarrow
 Z:=\left((\C^2\setminus \{0\})\times(\C^2\setminus \{0\})\right)
  /(\C^\times)^2
 \cong\PP^1\times\PP^1.
\end{equation*}
Note that $L$ is naturally identified with the total space of $\mathcal{O}_Z(2,2)$.
Let $F=F(x,y,z,w)\in H^0(\mathcal{O}_Z(4,4))$ be a homogeneous polynomial of bidegree $(4,4)$.
Assume that the curve $B\subset Z$ defined by $F=0$ has at most ADE-singularities.
We define $S_0$ by
$$
 S_0:=\{u^2=F\}\subset L.
$$
In other words, $S_0$ is a double covering of $Z$ branching along $B$. 
The minimal resolution $S$ of $S_0$ is a $K3$ surface (see the proof of Lemma \ref{LEM_detdet} below).
The group $\Gamma:=\GL(2,\C)\times \GL(2,\C)$ acts on $L$ by, for $\gamma=M_1\times M_2\in\Gamma$, 
$$
 \gamma (u,x,y,z,w)=
 (u,x',y',z',w'), \
 \begin{bmatrix} x' \\ y' \end{bmatrix}
 = M_1 \begin{bmatrix}x \\ y\end{bmatrix}, \
 \begin{bmatrix} z' \\ w' \end{bmatrix}
 = M_2 \begin{bmatrix}z \\ w\end{bmatrix}. 
$$ 
If $F$ is invariant under the action of $\gamma\in\Gamma$, then $\gamma$ naturally acts on $S_0$ as well. 
We denote by $\gamma^+$ the induced action of $\gamma$ on $S$.
The covering transformation of $S_0\rightarrow Z$, which is defined by $(u,x,y,z,w)\mapsto(-u,x,y,z,w)$, induces the involution $j:=(\sqrt{-1}\,I_2\times I_2)^+$ on $S$.
Note that there are two lifts of the action of $\gamma$ on $Z$: $\gamma^+$ and $\gamma^-:=\gamma^+ j$.
%
%
The K3 surface $S$ with the polarization $\mathcal{L}:=\pi^*\mathcal{O}_Z(1,1)$ is a polarized K3 surface of degree $4$.
Let $\Aut(S,\mathcal{L})$ denote the automorphism group of the polarized K3 surface $(S,\mathcal{L})$. 
\begin{Def} \label{(S,L,G)}
We denote by $(S,\mathcal{L},G)$ a triplet consisting of a polarized K3 surface $(S,\mathcal{L})$ defined above and a finite subgroup $G \subset \Aut(S,\mathcal{L})$. 
\end{Def}
We define $\Aut(S,\mathcal{L})^+$ to be the subgroup of $\Aut(S,\mathcal{L})$ preserving each ruling $Z\rightarrow \PP^1$. 
Then we have
\begin{equation} \label{EQ_polarizedaut} 
\Aut(S,\mathcal{L})^+=
 \{ \gamma\in\Gamma \bigm| \gamma^*F=F \}/
 \{ \lambda_1 I_2 \times \lambda_2 I_2 \bigm|
 \lambda_1^2 \lambda_2^2=1 \}.
\end{equation}

\begin{Rem}
Since the Picard group of $Z$ is isomorphic to $\Z^{\oplus 2}$ (hence torsion free), 
a line bundle $\mathcal{M}$ on $Z$ such that $\mathcal{M}^{\otimes 2}\cong\mathcal{O}_Z(4,4)$ is isomorphic to $\mathcal{O}_Z(2,2)$.
Therefore, a double covering of $Z$ branching along $B$ is unique and isomorphic to $S_0$.
\end{Rem}


\begin{Lem} \label{LEM_detdet}
Let $\omega_S$ be a nowhere vanishing holomorphic 2-form on $S$. 
If $F$ is invariant under the action of $\gamma=M_1\times M_2\in \Gamma$, 
we have
$$
 (\gamma^\pm)^* \omega_S=\pm\det(M_1)\det(M_2) \omega_S.
$$
\end{Lem}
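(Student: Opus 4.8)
The plan is to write the holomorphic $2$-form on $S$ explicitly in an affine chart, use the fact that $H^{2,0}(S)\cong\C$ to reduce the claim to a computation on a dense open set, and then carefully track the $(\C^\times)^2$-rescaling that is implicit in the definition of $\gamma^\pm$.

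First I would pass to the affine chart $\{x\neq 0,\ z\neq 0\}$ of $Z$ and use the $(\C^\times)^2$-action to normalize $x=z=1$. Writing $s=y/x$ and $t=w/z$ for the affine coordinates on $Z$, and $\WT{u}:=u/(x^2z^2)$ for the induced fibre coordinate on $L$ (this normalization is forced by the weight $(2,2)$ of $u$), the equation $u^2=F$ becomes $\WT{u}^{\,2}=f(s,t)$ with $f(s,t):=F(1,s,1,t)$, since $F$ is bihomogeneous of bidegree $(4,4)$. On this chart the Poincar\'e residue form of the double covering, $\omega_S=\dfrac{ds\wedge dt}{\WT{u}}$, is holomorphic and nowhere vanishing. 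I would recall that it extends across the smooth branch locus by rewriting it as $\tfrac{2}{f_t}\,ds\wedge d\WT{u}$ where $f_t\neq0$ (and symmetrically where $f_s\neq0$), and that, because the singularities of $S_0$ are ADE and hence rational double points, it extends across the exceptional divisors of the minimal resolution $S\to S_0$ as well; thus it generates $H^{2,0}(S)\cong\C$.

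Since $(\gamma^\pm)^*\omega_S$ is again a holomorphic $2$-form and $H^{2,0}(S)$ is one-dimensional, we have $(\gamma^\pm)^*\omega_S=\lambda^\pm\,\omega_S$ for scalars $\lambda^\pm$, so it suffices to determine $\lambda^\pm$ on the dense chart. Writing $M_1=\left(\begin{smallmatrix}a_1&b_1\\ c_1&d_1\end{smallmatrix}\right)$ and $M_2=\left(\begin{smallmatrix}a_2&b_2\\ c_2&d_2\end{smallmatrix}\right)$, I would compute that $\gamma$ sends the representative $(\WT{u},1,s,1,t)$ to $(\WT{u},\,a_1+b_1s,\,c_1+d_1s,\,a_2+b_2t,\,c_2+d_2t)$; renormalizing back into the chart then gives
\[
 \gamma^*s=\frac{c_1+d_1s}{a_1+b_1s},\quad
 \gamma^*t=\frac{c_2+d_2t}{a_2+b_2t},\quad
 \gamma^*\WT{u}=\frac{\WT{u}}{(a_1+b_1s)^2(a_2+b_2t)^2}.
\]
Differentiating, $\gamma^*(ds)=\dfrac{\det M_1}{(a_1+b_1s)^2}\,ds$ and $\gamma^*(dt)=\dfrac{\det M_2}{(a_2+b_2t)^2}\,dt$, and substituting into $\omega_S=ds\wedge dt/\WT{u}$ the scalar factors $(a_1+b_1s)^2(a_2+b_2t)^2$ cancel exactly, yielding $(\gamma^+)^*\omega_S=\det(M_1)\det(M_2)\,\omega_S$. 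Finally, since $\gamma^-=\gamma^+\circ j$ with $j$ the covering involution $\WT{u}\mapsto-\WT{u}$ (so $j^*\omega_S=-\omega_S$), we obtain $(\gamma^-)^*\omega_S=-\det(M_1)\det(M_2)\,\omega_S$; this is consistent with the identification $j=(\sqrt{-1}\,I_2\times I_2)^+$, for which $\det(\sqrt{-1}\,I_2)\det(I_2)=-1$.

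The differentiations are routine; the step requiring the most care is the normalization of the fibre coordinate $\WT{u}=u/(x^2z^2)$. The entire identity hinges on the fact that the weight-$(2,2)$ rescaling contributes precisely the factor $(a_1+b_1s)^2(a_2+b_2t)^2$ to $\gamma^*\WT{u}$, exactly matching the two Jacobian denominators produced by $\gamma^*(ds)$ and $\gamma^*(dt)$. Getting this bookkeeping right, rather than ending up with a residual power of the linear forms, is what makes the two determinants appear cleanly with no leftover factor, and it is the only place where a weight or sign slip could derail the argument.
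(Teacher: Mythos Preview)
Your proof is correct and follows essentially the same idea as the paper: write down the explicit holomorphic $2$-form on the double cover and compute how it transforms. The paper compresses the entire computation into one line by working in homogeneous coordinates with the form $(x\,dy-y\,dx)\wedge(z\,dw-w\,dz)/u$, for which the transformation law $(\gamma^\pm)^*\omega_S=\pm\det(M_1)\det(M_2)\,\omega_S$ is immediate since $x\,dy-y\,dx\mapsto\det(M)\,(x\,dy-y\,dx)$ under any linear map $M$; your affine chart version with $s=y/x$, $t=w/z$, $\widetilde{u}=u/(x^2z^2)$ is exactly this form restricted to $x=z=1$, so your careful cancellation of $(a_1+b_1s)^2(a_2+b_2t)^2$ is the affine shadow of the homogeneous identity.
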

\begin{proof}
Since $(xdy-ydx)\wedge(zdw-wdz)/u$ gives a nowhere vanishing holomorphic $2$-form on $S$, the equality in the assumption holds.
\end{proof}

\begin{Lem} \label{LEM_fixedpoint}
Let $\phi \colon Y\rightarrow Y_0$ be the minimal resolution of a surface $Y_0$ with at most ADE-singularities.
Then an automorphism $g$ of $Y_0$ has a fixed point if and only if the induced action of $g$ on $Y$ has a fixed point.
\end{Lem}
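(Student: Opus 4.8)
The plan is to use the functoriality of the minimal resolution together with a fixed-point argument on the exceptional divisor. Since $Y_0$ has only ADE (rational double point) singularities, its minimal resolution is canonical, so any automorphism $g$ of $Y_0$ lifts to a unique automorphism $\WT{g}$ of $Y$ satisfying $\phi \circ \WT{g} = g \circ \phi$. The \emph{only if} direction is then immediate: if $\WT{g}(p)=p$ for some $p\in Y$, then $g(\phi(p))=\phi(\WT{g}(p))=\phi(p)$, so $\phi(p)$ is a fixed point of $g$.

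For the converse, suppose $g(q)=q$ for some $q \in Y_0$. If $q$ is a smooth point, then $\phi^{-1}(q)$ is a single point $p$; since $\WT{g}$ maps $\phi^{-1}(q)$ bijectively onto $\phi^{-1}(g(q))=\phi^{-1}(q)=\{p\}$, we get $\WT{g}(p)=p$. The substantial case is when $q$ is a singular point. Then $E:=\phi^{-1}(q)$ is a connected configuration of $(-2)$-curves whose dual graph is the ADE Dynkin diagram of the singularity, in particular a finite tree, and each irreducible component is isomorphic to $\PP^1$. Because $g(q)=q$, the lift satisfies $\WT{g}(E)=E$, so $\WT{g}$ restricts to an automorphism of $E$ and induces a graph automorphism $\sigma$ of the dual tree.

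First I would invoke the standard fact that any automorphism of a finite tree fixes either a vertex or an edge (possibly interchanging its endpoints). If $\sigma$ fixes a vertex, the corresponding component $C\cong\PP^1$ is preserved by $\WT{g}$, and since every element of $\Aut(\PP^1)=\PGL(2,\C)$ has a fixed point, $\WT{g}$ has a fixed point on $C\subset Y$. If instead $\sigma$ preserves an edge while swapping its two endpoints, then the corresponding node $C_i\cap C_j$ is a single reduced point sent to $C_j\cap C_i$, hence fixed by $\WT{g}$. In either case $\WT{g}$ has a fixed point lying over $q$, which proves the claim.

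The main thing to get right is the reduction to the dual graph and the combinatorial fixed-point fact: one must check that the lift genuinely preserves the exceptional fiber $E$ over the fixed singular point and acts on it through an automorphism of the ADE tree. Once this is set up, the two elementary observations---that automorphisms of finite trees have a fixed vertex or edge, and that $\PP^1$-automorphisms always have a fixed point---close the argument. Note that no finiteness hypothesis on $g$ is needed, since the automorphism group of a finite graph is automatically finite and $\Aut(\PP^1)$ always has fixed points regardless of order.
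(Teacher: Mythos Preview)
Your proof is correct and follows the same approach as the paper's one-line proof, which simply asserts that any automorphism of a connected ADE-configuration has a fixed point; you have supplied the details of that fact via the tree-plus-$\PP^1$ argument. One small correction: your direction labels are swapped---the implication ``$\WT{g}$ has a fixed point on $Y$ $\Rightarrow$ $g$ has a fixed point on $Y_0$'' is the \emph{if} direction, not the \emph{only if}.
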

\begin{proof}
The assertion follows from the fact that any automorphism of a connected ADE-configuration has a fixed point.
\end{proof}


\begin{Prop} \label{PROP_triplet}
Let $(S,\mathcal{L},G)$ be a triplet defined in Definition \ref{(S,L,G)}. 
Assume that the action of $G$ on $S$ is a Calabi--Yau action.
Then such triplets $(S,\mathcal{L},G)$ are classified into the types in the following table up to isomorphism.
Here two triplets $(S,\mathcal{L},G)$ and $(S',\mathcal{L}',G')$ are isomorphic 
if there exists an isomorphism $f\colon S\rightarrow S'$ such that $f^*\mathcal{L}'=\mathcal{L}$ and $f^{-1} \circ G' \circ f=G$.
\begin{equation*}
\begin{array}{|c|c|c|} \hline
 H & \Xi & \text{basis of } H^0(\mathcal{O}_Z(4,4))^G \\
 \hline
 C_1 & \emptyset &  x^i y^{4-i} z^j w^{4-j} + x^{4-i} y^i z^{4-j} w^j \\ \hline 
 C_2 & \{(1/4,1/4) \}&  x^i y^{4-i} z^j w^{4-j} + x^{4-i} y^i z^{4-j} w^j \ \  (i \equiv j \bmod 2) \\ \hline 
 C_2 & \{(1/4,0) \}&  x^i y^{4-i} z^j w^{4-j} + x^{4-i} y^i z^{4-j} w^j \ \ (i \equiv 0 \bmod 2) \\ \hline 
 C_3 & \{(1/3,1/3) \}& x^i y^{4-i} z^j w^{4-j} + x^{4-i} y^i z^{4-j} w^j \ \ ( i+j\equiv 1 \bmod 3) \\ \hline 
 C_4 & \{(1/8,1/8) \}&   x^4 z^4+y^4 w^4,x^4 w^4+y^4 z^4,x^3 y z w^3+x y^3 z^3 w, x^2 y^2 z^2 w^2 \\ \hline 
 C_4 & \{(1/8,1/4) \}&  x^4 z^3 w+y^4 z w^3,x^4 z w^3+y^4 z^3 w,x^2 y^2 z^4+x^2 y^2 w^4,  x^2 y^2 z^2 w^2 \\ \hline 
 C_5 & \{(1/5,2/5) \}& x^4 z w^3+y^4 z^3 w,x^3 y z^4+x y^3 w^4,x^2 y^2 z^2 w^2 \\ \hline 
 C_6 & \{(1/12,1/6) \}& x^4 z^4+y^4 w^4,x^4 z w^3+y^4 z^3 w,x^2 y^2 z^2 w^2 \\ \hline 
 C_2 \times C_2& \{(1/4,0),(0,1/4) \}& x^i y^{4-i} z^j w^{4-j} + x^{4-i} y^i z^{4-j} w^j\ \ ( i\equiv j \equiv 0 \bmod 2)\\ \hline 
 C_2 \times C_4 & \{(1/8,1/8),(0,1/4)\}& x^4 z^4+y^4 w^4,x^4 w^4+y^4 z^4,x^2 y^2 z^2 w^2 \notag  \\ \hline 
\end{array}
\end{equation*}
A triplet $(S,\mathcal{L},G)$ for each type is defined as follows. 
\begin{enumerate}
\item
$F$ is invariant under the action of
 $M(a)\times M(b)$ for all $(a,b)\in \Xi$ and
 $\iota_1$, where
 $$
 M(a):=
 \begin{bmatrix} \exp(2\pi i a) & 0 \\
0 & \exp(-2\pi i a) \end{bmatrix} \ ,
\iota_1:=
 \begin{bmatrix}
        0 & 1\\
        1 & 0\\
        \end{bmatrix}
        \times
 \begin{bmatrix}
        0 & 1\\
        1 & 0\\
        \end{bmatrix}\text{.}
 $$
\item
$H:=\langle (M(a)\times M(b))^+ \bigm| (a,b)\in \Xi \rangle$.
\item
$G:=H\rtimes \langle \iota \rangle, \ 
 \iota:=\iota_1^-
 = \left(
 \sqrt{-1}  \begin{bmatrix}
        0 & 1\\
        1 & 0\\
        \end{bmatrix}
        \times
 \begin{bmatrix}
        0 & 1\\
        1 & 0\\
        \end{bmatrix}
 \right)^+ \text{.}$
\item \label{ITEM_nofixed}
for any $g\in G\setminus H$, the action of $g$ on $B$ has no fixed point.
\end{enumerate}
Furthermore, for a generic $F \in H^0(\mathcal{O}_Z(4,4))^G$,
 the surface $S_0$ is smooth and the condition (\ref{ITEM_nofixed}) is satisfied, and thus the action of $G$ on $S_0(=S)$ is a Calabi--Yau action.
\end{Prop}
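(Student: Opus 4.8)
The plan is to prove the two assertions of the proposition separately: first the \emph{classification}, that every triplet $(S,\mathcal{L},G)$ carrying a Calabi--Yau $G$-action is isomorphic to one in the table, and then the \emph{realizability} (the ``Furthermore'' clause), that a generic invariant $F$ actually produces such a triplet. The two workhorses throughout are the explicit description \eqref{EQ_polarizedaut} of $\Aut(S,\mathcal{L})^+$ as $\{\gamma\in\Gamma\mid\gamma^*F=F\}$ modulo scalars, and Lemma \ref{LEM_detdet}, which converts the symplectic/anti-symplectic dichotomy into the numerical condition $\det(M_1)\det(M_2)=\pm1$ on the two blocks of $\gamma=M_1\times M_2$, combined with the choice of lift $\gamma^+$ or $\gamma^-$.

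For the classification I would first reduce to standard coordinates. Since $H$ is a finite abelian group acting symplectically and $\iota=\iota_1^-$ preserves the two rulings, I would show that up to triplet-isomorphism $G\subset\Aut(S,\mathcal{L})^+$ and that the image $H_Z\subset\Aut(Z)$ can be simultaneously diagonalized on each $\PP^1$-factor, so that $H$ is generated by lifts $(M(a)\times M(b))^+$; as $\det M(a)=\det M(b)=1$, these are automatically symplectic. The involution is normalized to $\iota_1^-$, for which $\det=(-1)(-1)=1$ together with the $-$ sign gives $(\iota_1^-)^*\omega_S=-\omega_S$, and $\iota_1(M(a)\times M(b))\iota_1=M(-a)\times M(-b)$ realizes $\iota h\iota=h^{-1}$. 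With $(n,m)$ restricted by Proposition \ref{PROP_OS}, the heart is the character computation: the monomial $x^i y^{4-i} z^j w^{4-j}$ is $H$-invariant exactly when $a(2i-4)+b(2j-4)\in\Z$ for every $(a,b)\in\Xi$, and $\iota_1$ pairs it with $x^{4-i}y^{i}z^{4-j}w^{j}$; intersecting these yields $H^0(\mathcal{O}_Z(4,4))^G$ and the symmetrized bases listed. The decisive additional constraint is that $B=\{F=0\}$ must have at most ADE-singularities. Imposed on the generic invariant $F$, this is what excludes $H\cong C_3\times C_3$: for every diagonal-torus embedding the invariance conditions force all exponents $\equiv 2$, collapsing $H^0(\mathcal{O}_Z(4,4))^G$ to $\langle x^2y^2z^2w^2\rangle$ and making $B$ four double lines, which is not ADE. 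The same ADE-test distinguishes the two $C_2$-types and the two $C_4$-types according to whether the cyclic factor acts on one ruling or diagonally on both, and here I would check that the corresponding $\Xi$ are inequivalent under $\mathrm{PGL}_2\times\mathrm{PGL}_2$ and the ruling swap, so the triplets are genuinely non-isomorphic. A short order bookkeeping tracking the covering involution $j$ (for instance $(M(1/8)\times M(1/8))^{+}$ has order $4$ since its square is nontrivial in $\Aut(S,\mathcal{L})$) confirms that each listed $\Xi$ generates the asserted $H$.

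For realizability I would fix one of the listed $\Xi$ and take $F$ generic in $H^0(\mathcal{O}_Z(4,4))^G$. Generic smoothness of $S_0$ reduces, through the ADE description, to producing a generic branch member $B$ with at most ADE-singularities: away from the base locus this is Bertini, and at the finitely many base points, which lie over $G$-fixed points of $Z$, one inspects the local singularity type directly. Condition (\ref{ITEM_nofixed}) is then an open, generically nonempty condition: for $g\in G\setminus H$ the fixed points of the lift $\gamma^-$ on $S_0$ lie over the $\gamma_Z$-fixed points of $Z$, and over such a point off $B$ the sheet swap $u\mapsto-u$ built into the $-$ lift has no fixed point, so freeness of $g$ is equivalent to $B$ avoiding these points, that is, to $g$ acting freely on $B$; by Lemma \ref{LEM_fixedpoint} this descends to $S$. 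Combining Lemma \ref{LEM_detdet} (giving $H$ symplectic and every $g\in G\setminus H$ anti-symplectic) with this freeness identifies each such $g$ as an Enriques involution, exactly the Horikawa mechanism of Example \ref{K3->P1P1}, so the $G$-action is Calabi--Yau.

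The step I expect to be the main obstacle is the classification, and within it the reduction to the standard diagonal form: ruling-swapping elements and a priori non-diagonalizable (Klein four) abelian actions on a single $\PP^1$-factor must be shown either to be conjugate into the listed normal forms or to fail the symplectic or ADE constraints, and the two $C_2$- and two $C_4$-types must be carefully separated rather than conflated. By comparison the realizability direction is routine, its only delicate point being the verification of generic smoothness at the base points of the restricted, non-complete linear systems.
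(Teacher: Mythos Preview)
Your overall strategy coincides with the paper's: normalize $\iota$ to $\iota_1^-$, diagonalize the generators of $H$ to the form $(M(a)\times M(b))^+$, list the admissible $(a,b)$ up to the natural equivalences, and discard those for which the $G$-invariant $(4,4)$-system forces $B$ to violate the ADE hypothesis. The realizability half via Bertini on the base locus together with the direct sheet-swap check for condition~(\ref{ITEM_nofixed}) is also essentially the paper's argument (the paper phrases the last step slightly differently, invoking that an anti-symplectic involution of a K3 surface has $1$-dimensional fixed locus if nonempty).

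The genuine gap is exactly at the step you flag as the main obstacle, but the mechanism you propose is the wrong one. You write that ruling-swapping elements and non-diagonal pieces of $H$ ``must be shown either to be conjugate into the listed normal forms or to fail the symplectic or ADE constraints.'' Neither the symplectic condition (which is only a determinant identity via Lemma~\ref{LEM_detdet}) nor the ADE condition on $B$ excludes a ruling-swap. The operative constraint is the Enriques condition: every $g\in G\setminus H$ is an involution with $S^g=\emptyset$, hence $Z^g$ must be finite; but an involution of $\PP^1\times\PP^1$ that exchanges the two rulings fixes a diagonal $\PP^1$. This forces every $g\in G\setminus H$ to preserve each ruling, and since these elements generate $G$ (write $h=(h\iota)\iota$), one obtains $G\subset\Aut(S,\mathcal{L})^+$. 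The same finiteness of $Z^{h\iota}$ for all $h\in H$, combined with commutation inside $H$, is what rules out the off-diagonal possibilities for the second generator $\tau$ in the non-cyclic cases $H\cong C_2\times C_2,\ C_2\times C_4,\ C_3\times C_3$. Without this Enriques-based argument the reduction to diagonal $M(a)\times M(b)$ never starts and the character computation has nothing to act on. One smaller correction: both $C_2$-types and both $C_4$-types survive the ADE test; they are distinct triplets because the underlying actions on $Z$ are non-conjugate in $\PGL_2\times\PGL_2$ (even allowing the ruling swap), not because of any singularity obstruction.
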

\begin{Rem} \label{REM_invariant}
In Proposition \ref{PROP_triplet}, the group $G\subset \Aut(S,\mathcal{L})$ for each type acts on
$H^0(\mathcal{O}_Z(4,4))$ in a natural way by using the generator matrices.
Hence we can define the $G$-invariant space $H^0(\mathcal{O}_Z(4,4))^G$. 
We will use a similar convention in Proposition \ref{PROP_222section}.
\end{Rem}

\begin{proof}
Let $(S,\mathcal{L},G)$ be a triplet such that the action of $G=H\rtimes\LF{\iota}$ on $S$ is a Calabi--Yau action.
By Definition \ref{def_cyg_k3}, $H$ is isomorphic to one in the table or $C_3 \times C_3$.
For $g\in G\setminus H$,  the action of $g$ preserves each ruling $Z\rightarrow \PP^1$; otherwise, $Z^g$ is $1$-dimensional and $S^g\neq \emptyset$.
Since $G$ is generated by $G\setminus H$, we may assume that any element in $G$ is of the form $\gamma^{\pm}$ for $\gamma\in\Gamma$ with $\gamma^* F=F$ by (\ref{EQ_polarizedaut}).
By Lemma \ref{LEM_fixedpoint}, the condition (\ref{ITEM_nofixed}) is satisfied.

We begin with the case $H=C_1$.
Since $S^\iota=\emptyset$, it follows that $Z^\iota$ is (at most) $0$-dimensional.
Hence we may assume that
 $$\iota=
 \left(
  \lambda\begin{bmatrix}
        0 & 1\\
        1 & 0\\
        \end{bmatrix}
        \times
 \begin{bmatrix}
        0 & 1\\
        1 & 0\\
        \end{bmatrix}
 \right)^+$$
 after changing the coordinates of $Z$.
Since the action of $\iota$ on $S$ is anti-symplectic by Theorem \ref{thm_enriques_involution}, we have $\lambda=\pm \sqrt{-1}$ by Lemma \ref{LEM_detdet}, thus $\iota=\iota_1^-$.

Let us next consider the case $H=C_n$ ($1\leq n\leq 6$). 
Let $\sigma$ be a generator of $H$.
By the argument above and the relation $\iota\sigma\iota=\sigma^{-1}$, 
we may assume that $\iota=\iota_1^-$ and $\sigma=(\lambda M(a)\times M(b))^+$ for some $a,b\in \Q$ after changing the coordinates of $Z$.
Since the action of $\sigma$ on $S$ is symplectic, we have $\lambda=\pm 1$ by Lemma \ref{LEM_detdet}, thus $\sigma=(M(a)\times M(b))^+$.
If $ka\not\in \frac{1}{4}\Z$ and $kb\in \frac{1}{2}\Z$ for some $k\in\Z$, then $F$ is divisible by $x^2y^2$.
Hence this case is excluded.
We can see that $(a,b)$, $(b,a)$, $(-a,b)$, $(a+1/2,b)$, and $(ka,kb)$ with GCD$(k,n)=1$ give isomorphic triplets.
Therefore, we may assume that $(a,b)$ is one of the following:
\begin{align}
 (1/4,1/4),\ (1/4,0), \ (1/3,&1/3), \ (1/8,1/8), \ (1/8,1/4),\notag \\
 (1/5,1/5), \ (1/5,2/5), \ &(1/12,1/12),\ (1/12,1/6).\notag 
\end{align}
Here we have $n=\operatorname{min}\{k \in \Z_{>0} \ | \ ka\in\frac{1}{2}\Z,kb\in\frac{1}{2}\Z \}$.
Suppose that $(a,b)=(1/5,1/5)$ or $(1/12,1/12)$. Then $F$ is a linear combination of
$$
 x^4 w^4+y^4 z^4,~ x^3yzw^3+xy^3z^3w,~ x^2y^2z^2w^2,
$$
and hence $B$ has a singular point of multiplicity $4$ at, for instance,\ $[1:0]\times[1:0]$.
This contradicts to the assumption that $B$ admits only ADE-singularities.
Therefore the cases $(a,b)=(1/5,1/5)$ and $(1/12,1/12)$ are excluded.

Lastly, let us consider the cases $H=C_2\times C_2,\ C_2\times C_4$, and $C_3 \times C_3$. 
Let $\sigma,\tau$ be generators of $H$ such that $\ord(\sigma)$ is divisible by $\ord(\tau)$.
By a similar argument for $H=C_n$, $n=\ord(\sigma)$, we may assume that
$$
 \sigma=(M(a)\times M(b))^+,~
 \tau=\left(
  \lambda M(a')
  \begin{bmatrix}
        0 & 1\\
        1 & 0\\
        \end{bmatrix}^e
  \times
  M(b')
  \begin{bmatrix}
        0 & 1\\
        1 & 0\\
        \end{bmatrix}^f
 \right)^+,~
 \iota=\iota_1^-,
$$
 where $e,f\in\{0,1\}$, $\lambda^2 (-1)^{e+f}=1$ and $a,b,a',b'\in\Q$.
We can chose $(a,b)$ as is given in the table for $H=C_n$. 
Moreover, we may assume that $a'=0$ after replacing $\tau$ by $\sigma^k \tau$ for some $k \in \Z$.
By the argument for $H=C_1$, it follows that $Z^{h \iota}$ is $0$-dimensional for $h\in H$.
This implies that $e=0$, and $f=0$ if $(a,b)\neq(1/4,0)$.
We may assume that one of the following cases occurs.
\newcommand{\JJ}
 {\left( \begin{smallmatrix}&1\\1&\end{smallmatrix} \right)}
$$
\begin{array}{|c|c|c|c|} \hline
 & (a,b) & (a',b') & (e,f) \\
 \hline
 \text{(i)}   & (1/4,0) & (0,1/4) & (0,0) \\ \hline
 \text{(ii)}  & (1/4,0) & (0,1/4) & (0,1) \\ \hline
 \text{(iii)} & (1/8,1/8) & (0,1/4) & (0,0) \\ \hline
 \text{(iv)}  & (1/8,1/4) & (0,1/4) & (0,0) \\ \hline
 \text{(v)}   & (1/3,1/3) & (0,1/3) & (0,0) \\  \hline
\end{array}
$$
We can check that the cases (i) and (ii) for $H=C_2\times C_2$
 give isomorphic triplets.
Since $F$ is divisible by $x^2 y^2$ in the cases (iv) and (v),
 these cases are excluded.

Conversely, we check that the action of $G$ on $S$ is a Calabi--Yau action for $(S,\mathcal{L},G)$ of each type.
By the argument above, the action of $H$ on $S$ is symplectic.
Let $g\in G \setminus H$.
Then $B^g=\emptyset$ and $Z^g$ is $0$-dimensional, thus $S^g$ is either empty or $0$-dimensional.
Recall that the fixed locus of an anti-symplectic involution of a K3 surface is $1$-dimensional if it is not empty (see Section \ref{SECT_k3}).
This implies that $S^g=\emptyset$.
Therefore, the action of $G$ on $S$ is a Calabi--Yau action.

To show the smoothness of $S_0$ for a generic $F$, it suffices, by Bertini's theorem, to show that $B$ is smooth on the base locus of the linear system defined by $H^0(\mathcal{O}_Z(4,4))^G$.
We can check this directly.
We also find that the action of any $g\in G \setminus H$ on $B$ has no fixed point for a generic $F$.
\end{proof}

For $H=C_2$ or $C_4$, we obtain two families of K3 surfaces with a Calabi--Yau action of $G=H\rtimes \langle\iota\rangle$ by Proposition \ref{PROP_triplet}. 
As we will see in Proposition \ref{PROP_222section} below, if we forget the polarizations, they are essentially and generically the same families of K3 surfaces with a Calabi--Yau $G$-action. 
Let
$$
 Q=Q(s_1,t_1,s_2,t_2,s_3,t_3)
  \in H^0(\mathcal{O}_{\PP}(2,2,2)),\quad
 \PP:=\PP^1\times\PP^1\times\PP^1,
$$
 be a homogeneous polynomial of tridegree $(2,2,2)$, where $[s_i:t_i]$ is homogeneous coordinates of the $i$-th $\PP^1$.
Assume that the surface
\begin{equation*}
 S_0:=\{Q=0\}\subset \PP
\end{equation*}
has at most ADE-singularities. 
Then the minimal resolution $S$ of $S_0$ is a K3 surface. 
Let $p_i \colon \PP\rightarrow \PP^1$ denote the $i$-th projection.
With this notation, we have the following.

\begin{Prop} \label{PROP_222section}
Let $G\subset \PGL(2,\C)^3$ be the group defined by $\Xi'$ in the following table, in a similar way to Proposition \ref{PROP_triplet}.
\begin{equation*}
\begin{array}{|c|c|c|}
 \hline
 H & \Xi' & \text{basis of } H^0(\mathcal{O}_\PP(2,2,2))^G \\ \hline
 C_1 & \emptyset & \invpoly  \\ \hline 
 C_2 & \{(1/4,1/4,0) \}
  & \begin{matrix} \invpoly \\ (i+j \equiv 0 \bmod 2) \end{matrix}
  \\ \hline 
 C_3 & \{(1/3,1/3,1/3) \}
  & \begin{matrix} \invpoly \\ (i+j+k \equiv 0 \bmod 3) \end{matrix}
  \\ \hline 
 C_4 & \{(1/8,1/8,1/4) \}
  & \begin{matrix} \invpoly \\ (i+j+2k \equiv 0 \bmod 4) \end{matrix}
  \\ \hline 
 C_2 \times C_2& \{ (1/4,0,1/4),(0,1/4,1/4) \}
  & \begin{matrix} \invpoly \\ (i+k \equiv j+k \equiv 0 \bmod 2) \end{matrix}
  \\ \hline 
\end{array}
\end{equation*}
More precisely, $H$ is generated by $M(a)\times M(b)\times M(c)$ for $(a,b,c)\in \Xi'$, and $G:=H\rtimes \langle \iota \rangle$, where
\begin{equation*}
 \iota=\begin{bmatrix}
        0 & 1\\
        1 & 0\\
        \end{bmatrix}
         \times \begin{bmatrix}
        0 & 1\\
        1 & 0\\
        \end{bmatrix}
 \times \begin{bmatrix}
        0 & 1\\
        1 & 0\\
        \end{bmatrix}.
\end{equation*}
For a generic $Q\in H^0(\mathcal{O}_\PP(2,2,2))^G$, 
the surface $S_0$ is smooth and the action of $G$ on $S(=S_0)$ is a Calabi--Yau action.
Moreover, a generic triplet for $H=C_n \ (1\leq n \leq 4)$ or $C_2 \times C_2$ constructed in Proposition \ref{PROP_triplet} is of the form $(S,\mathcal{L},G)$, 
where
 $$
 \mathcal{L}=((p_\alpha\times p_\beta)^* \mathcal{O}_{\PP^1\times\PP^1}(1,1))|_S
 $$
 with $\alpha,\beta \in \{ 1,2,3 \}, \ \alpha \ne \beta$.
\end{Prop}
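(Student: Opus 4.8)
The plan is to prove the two assertions in turn: first produce the Calabi--Yau action on a generic $(2,2,2)$-model, and then identify these models with the degree-$4$ triplets of Proposition \ref{PROP_triplet} via a projection.

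For the first assertion I would establish the exact analogue of Lemma \ref{LEM_detdet}. Writing $\omega_i:=s_i\,dt_i-t_i\,ds_i$, the rational $3$-form $\omega_1\wedge\omega_2\wedge\omega_3/Q$ on $\PP$ has its only pole along $S_0$, and its Poincar\'e residue is a nowhere vanishing holomorphic $2$-form $\omega_S$ (this reproves that the minimal resolution $S$ is a K3 surface). Since $Q$ is $G$-invariant and each $\omega_i$ transforms by $\det M_i$ under $\gamma=M_1\times M_2\times M_3$, I obtain
\[
 \gamma^*\omega_S=\det(M_1)\det(M_2)\det(M_3)\,\omega_S .
\]
Every diagonal generator $M(a)\times M(b)\times M(c)$ has determinant $1$, so $H$ acts symplectically, whereas $\iota$ is a product of three swaps with determinant $(-1)^3=-1$, hence anti-symplectic; the same holds for every $g\in G\setminus H$. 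To see that such $g$ acts freely, note that $g$ acts on each $\PP^1$ factor by an anti-diagonal matrix, whose fixed locus is two points, so $\PP^g$ is finite; for generic $Q$ the surface $S_0$ avoids these points, whence $S_0^g=\varnothing$, and by Lemma \ref{LEM_fixedpoint} also $S^g=\varnothing$. Since the fixed locus of a nonempty anti-symplectic involution of a K3 surface is $1$-dimensional, $g$ is an Enriques involution. Smoothness of $S_0$ for generic $Q$ follows from Bertini's theorem once I check directly that the base locus of $\lvert H^0(\mathcal{O}_\PP(2,2,2))^G\rvert$ forces no singularity. Together this shows the $G$-action is a Calabi--Yau action.

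For the ``moreover'' part I would realize the $(2,2,2)$-model as a Horikawa model by projecting away one factor. Fix $\{\alpha,\beta,\gamma\}=\{1,2,3\}$ and regard $Q$ as a binary quadratic form $A\,s_\gamma^2+B\,s_\gamma t_\gamma+C\,t_\gamma^2$ with $A,B,C\in H^0(\mathcal{O}_{\PP^1\times\PP^1}(2,2))$ in the remaining variables. Then $p_\alpha\times p_\beta\colon S_0\to Z:=\PP^1_\alpha\times\PP^1_\beta$ is a double covering branched along the $(4,4)$-curve $\{B^2-4AC=0\}$, so $S$ is precisely a degree-$4$ model as in Proposition \ref{PROP_triplet}, with $\mathcal{L}=(p_\alpha\times p_\beta)^*\mathcal{O}_Z(1,1)=\pi^*\mathcal{O}_Z(1,1)$ (a Chow-ring computation on $(\PP^1)^3$ gives $\mathcal{L}^2=4$). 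The triple swap $\iota$ restricts to $\iota_1$ on $Z$, and the diagonal part $H$ restricts to the diagonal subgroup carrying the two characters of $\Xi'$ other than the $\gamma$-th one. Comparing with the table of Proposition \ref{PROP_triplet} I would verify case by case that these restricted characters land in $\Xi$: for instance, for $H=C_4$ the data $(1/8,1/8,1/4)$ yields $(1/8,1/8)$ when $\gamma=3$ and $(1/8,1/4)$ when $\gamma\in\{1,2\}$, exactly the two $C_4$-rows of Proposition \ref{PROP_triplet}; the cases $C_1,C_2,C_3,C_2\times C_2$ are analogous. This simultaneously explains why the two polarized families for $H=C_2,C_4$ coincide once the polarization is forgotten.

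It remains to show that the generic triplet of Proposition \ref{PROP_triplet} actually arises this way, and this is the step I expect to be the main obstacle. Equivalently I must prove that the discriminant map $\delta\colon Q\mapsto B^2-4AC$ sends $H^0(\mathcal{O}_\PP(2,2,2))^G$ dominantly onto the corresponding invariant space $H^0(\mathcal{O}_Z(4,4))^{G'}$ of Proposition \ref{PROP_triplet}, so that a generic invariant $(4,4)$-form is the discriminant of some invariant $(2,2,2)$-form. I would argue this by a dimension count: the fibers of $\delta$ are orbits of the subgroup of $\PGL(2,\C)$ acting on the $\gamma$-th factor and commuting with the $G$-action there (the centralizer of the swap, resp. of the relevant cyclic group), and I would check that $\dim H^0(\mathcal{O}_\PP(2,2,2))^G$ minus this orbit dimension equals $\dim H^0(\mathcal{O}_Z(4,4))^{G'}$ modulo scaling, then confirm dominance by exhibiting one explicit invariant $Q$ at which the differential of $\delta$ is surjective. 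Since the double covering of $Z$ branched along a prescribed curve is unique (the Remark preceding Lemma \ref{LEM_detdet}), a generic Horikawa triplet is then recovered from a $(2,2,2)$-model with $\mathcal{L}=((p_\alpha\times p_\beta)^*\mathcal{O}_{\PP^1\times\PP^1}(1,1))|_S$, which completes the proof.
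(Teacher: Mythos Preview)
Your proposal is correct and follows essentially the same route as the paper: the residue formula for $\omega_S$, the determinant computation showing $H$ symplectic and $G\setminus H$ anti-symplectic, Bertini plus the finiteness of $\PP^g$ for freeness, then the projection $p_\alpha\times p_\beta$ realizing $S$ as a Horikawa model via the discriminant map $\phi\colon Q=A s_\gamma^2+Bs_\gamma t_\gamma+Ct_\gamma^2\mapsto AC-B^2/4$, with dominance established by a dimension count.

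The one place where the paper is more careful than your sketch is your claim that ``the fibers of $\delta$ are orbits of the subgroup of $\PGL(2,\C)$ acting on the $\gamma$-th factor and commuting with the $G$-action there''. This is the heart of the matter, and it is not automatic: two $G$-invariant forms $Q,Q'$ with the same discriminant give two double covers of $Z$ with the same branch curve, hence abstractly isomorphic K3 surfaces over $Z$, but you still need to know that this isomorphism is induced by a projective transformation $I_2\times I_2\times M$ of the ambient $(\PP^1)^3$. The paper handles this by a deformation argument: taking a contractible open neighbourhood $\Delta$ of $Q$ inside the fiber, the family of isomorphisms $f_{Q'}\colon S\to\{Q'=0\}$ over $Z$ is continuous, and since $\operatorname{Pic}(S)$ is discrete the class of $p_\gamma^*\mathcal{O}(1)$ is preserved, which forces $f_{Q'}$ to come from such a $\gamma$. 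This yields $\dim(\text{fiber})\le\dim\Gamma$, and then one checks $\dim\Gamma\le\dim V-\dim W$ case by case in a short table. Your alternative of exhibiting a single $Q$ where $d\delta$ is surjective would also work and bypasses this step, at the cost of a somewhat larger explicit computation in each row.
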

\begin{proof}
In each case, we can check, for a generic $Q\in H^0(\mathcal{O}_\PP(2,2,2))^G$, that $S_0$ is smooth and that the action of any $g\in G\setminus H$ on $S(=S_0)$ has no fixed point 
 by a direct computation (see the proof of Proposition \ref{PROP_triplet}).
Since the action of $H$ on $S$ is symplectic (Lemma \ref{LEM_detdet}), the action of $G$ on $S$ is a Calabi--Yau action.
This proves the first assertion.

Let us next show the second assertion.
We assume that $(\alpha,\beta)=(1,2)$ as the other cases are similar.
Define the map
\begin{equation*}
 \phi \colon
 V:=H^0(\mathcal{O}_{\PP}(2,2,2))^G
 \rightarrow
 W:=H^0(\mathcal{O}_{\PP}(4,4,0))^G
\end{equation*}
 by
\begin{equation*}
 Q=A s_3^2+B s_3 t_3+C t_3^2
 \mapsto F= \det \begin{bmatrix} A & B/2 \\ B/2 & C \end{bmatrix}.
\end{equation*}
The branching locus $B$ of the double covering $p_1\times p_2\colon S_0\rightarrow \PP^1\times\PP^1$ is defined by $F=0$.
The map $\phi$ gives a correspondence between $(2,2,2)$-hypersurfaces in $\PP$ with a Calabi--Yau $G$-action 
and double coverings of $\PP^1\times\PP^1$ branching along a $(4,4)$-curve with a Calabi--Yau $G$-action.
Hence it suffices to show that $\phi$ is dominant.
We show this by comparing the dimensions of $\phi^{-1}(F)$, $V$ and $W$.
By the argument above and generic smoothness (\cite[Corollary III.10.7]{H}), we may assume that $S_0$ and $\phi^{-1}(F)$ is smooth (hence $S=S_0$) by taking a generic $Q\in V$.
Let $\Delta$ be a contractible open neighborhood of $Q$ in $\phi^{-1}(F)$.
We construct a natural family of embeddings $S\rightarrow \PP$ parametrized by $\Delta$ as follows.
Since the branching locus of the double covering
\begin{equation*}
 {(p_1\times p_2)\times \id_\Delta}\colon
 \overline{S}\rightarrow (\PP^1\times\PP^1)\times \Delta,\quad
 \overline{S}:=\{ (x,Q') \bigm| Q'(x)=0 \}\subset \PP\times\Delta,
\end{equation*}
 is $B\times \Delta$,
 we have a natural commutative diagram
\begin{equation*}
\xymatrix{
 S\times \Delta \ar[r]^f_{\cong} \ar[rd]_(0.35){(p_1\times p_2)\times \id_\Delta \quad}
  & \overline{S} 
    \ar[d]^{(p_1\times p_2)\times \id_\Delta} \\
   & (\PP^1 \times \PP^1) \times \Delta,
}
\end{equation*}
 where 
\begin{equation*}
 f_{Q'}:=f(*,Q')\colon S \rightarrow
  \{Q'=0\}, \quad
 Q'\in\Delta,
\end{equation*}
 is a $G$-equivariant isomorphism and $f_Q=\id_S$.
Since $\Delta$ is connected and the Picard group of $S$ is discrete, the map $f_{Q'}$ is represented by $\gamma\in \GL(2,\C)^3$ such that $\gamma$ commutes with $G$ in $\PGL(2,\C)^3$.
Furthermore, since $(p_1\times p_2)\circ f_{Q'}=p_1\times p_2$, we may assume that $\gamma=I_2\times I_2\times M$.
Then $\phi(\gamma^* Q)=\det(M)^2 F$.
Therefore, we have $\Delta \subset \Gamma^* Q$, where $\Gamma$ is a subgroup of $\GL(2,\C)^3$ defined by
\begin{equation*}
 \Gamma:=\{ \gamma=I_2\times I_2\times M \bigm|
  \text{$\gamma$ commutes with $G$ in $\PGL(2,\C)^3$},~
  \det(M)=\pm 1 \}.
\end{equation*}
Since $\dim \Delta \leq \dim \Gamma$, $\phi$ is dominant if $\dim \Gamma \leq \dim V-\dim W$.
This can be checked directly, as indicated in the following table.
\begin{equation*}
\begin{array}{|c|c|c|c|c|}
 \hline
 H & \Xi & \dim \Gamma & \dim V & \dim W \\ \hline
 C_1 & \emptyset          & 1 & 14 & 13 \\ \hline 
 C_2 & \{ (1/4,1/4) \}    & 1 & 8 & 7 \\ \hline 
 C_2 & \{ (1/4,0) \}      & 0 & 8 & 8 \\ \hline 
 C_3 & \{ (1/3,1/3) \}    & 0 & 5 & 5 \\ \hline 
 C_4 & \{ (1/8,1/8) \}    & 0 & 4 & 4 \\ \hline 
 C_4 & \{ (1/8,1/4) \}    & 0 & 4 & 4 \\ \hline 
 C_2 \times C_2 & \{ (1/4,0),(0,1/4) \}
  & 0 & 5 & 5 \\ \hline 
\end{array}
\end{equation*}
\end{proof}




\subsection{Uniqueness} \label{SECT_uniqueness}

In this section, we will prove the uniqueness theorem of Calabi--Yau actions (Theorem \ref{THM_uniqueness}). 
We will also show the non-existence of a K3 surface with a Calabi--Yau $G$-action for $G \cong (C_3 \times C_3) \rtimes C_2$ (Theorem \ref{non-existence C3^2}). 
Henceforth we fix a semi-direct product decomposition $G=H\rtimes \langle \iota \rangle$ of a Calabi--Yau group $G$ as in Proposition \ref{PROP_OS}.
The key to proving the uniqueness is the following lemma, whose proof will be given in Section \ref{Key Lemma Section}. 

\begin{Lem}[Key Lemma] \label{LEM_keylemma}
Let $S$ be a K3 surface with a Calabi--Yau $G$-action. 
Then there exists an element $v\in \mathrm{NS}(S)^G$ such that $v^2=4$.
\end{Lem}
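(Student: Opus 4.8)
The plan is to reduce the statement to the computation of the invariant lattice $\Lambda^G$, where $\Lambda := H^2(S,\Z)$, and then to exhibit a vector of square $4$ inside it.

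\emph{First reduction.} I would begin by showing $\mathrm{NS}(S)^G = \Lambda^G$. Since every $g\in G\setminus H$ is an Enriques involution we have $g^*\omega_S=-\omega_S$, so for any $v\in\Lambda^G$ one gets $\langle v,\omega_S\rangle=\langle g^* v, g^*\omega_S\rangle=-\langle v,\omega_S\rangle$, whence $v\perp\omega_S$ and $v\in\mathrm{NS}(S)$ by \eqref{EQ_ns_lat}. Thus producing $v\in\Lambda^G$ with $v^2=4$ is exactly the assertion. I would next pin down the signature of $\Lambda^G$: averaging an ample class over $G$ yields a $G$-invariant class of positive square, so the positive part has rank at least $1$; on the other hand $\Lambda^G\subseteq\Lambda^\iota\cong U(2)\oplus E_8(-2)$ by Theorem \ref{thm_enriques_involution}, which has signature $(1,9)$, forcing the positive part to be exactly $1$-dimensional. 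Hence $\Lambda^G$ is an even lattice of signature $(1,r)$.

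\emph{Structural description.} The relation $\iota h\iota=h^{-1}$ shows that $\iota$ preserves the symplectic invariant lattice $\Lambda^H$; note, however, that since $\langle\iota\rangle$ is \emph{not} normal in $G$ (conjugation by $h$ sends $\iota$ to $h^2\iota$), the group $H$ does not preserve $\Lambda^\iota$, and one cannot simply pass to the Enriques quotient $S/\langle\iota\rangle$. Viewing $\iota$ as an involution of $\Lambda^H$, we then have $\Lambda^G=\Lambda^H\cap\Lambda^\iota=(\Lambda^H)^\iota$. Because $H$ acts symplectically it acts trivially on the transcendental lattice $T(S)$, so $T(S)\subseteq\Lambda^H$, the coinvariant lattice $\Lambda_H=(\Lambda^H)^\perp$ is negative definite and lies in $\mathrm{NS}(S)$, and by Nikulin's classification of finite symplectic group actions on K3 surfaces the isometry type of $\Lambda_H$ (hence the genus of $\Lambda^H$) is determined by the isomorphism type of $H$ listed in Proposition \ref{PROP_OS}.

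\emph{Case analysis and the square-$4$ vector.} For each admissible $H$ I would compute the isometry class of $\Lambda^G=(\Lambda^H)^\iota$, combining the known discriminant form of $\Lambda^H$ with the Enriques data $\Lambda^\iota\cong U(2)\oplus E_8(-2)$ and $\Lambda_\iota\cong U\oplus U(2)\oplus E_8(-2)$, and using Lemma \ref{Involution} to control the index $[\Lambda:\Lambda^\iota\oplus\Lambda_\iota]$ and the associated gluing. The aim in each case is to realize $\Lambda^G$ with an orthogonal summand $U(2)$, so that $v:=e+f$ for the standard basis $e,f$ satisfies $v^2=2\langle e,f\rangle=4$; when such a splitting is not transparent, I would instead determine the pair $(\rk,\text{discriminant form})$ and invoke the genus results (Theorem \ref{Nik Genus} together with Propositions \ref{Nik Isotropic} and \ref{Nik Disc form}) to identify $\Lambda^G$ up to isometry and read off a vector of square $4$.

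\emph{Main obstacle.} I expect the decisive difficulty to be this last step rather than the formal reductions. Because $\langle\iota\rangle$ is not normal, $\iota$ acts nontrivially on $\Lambda^H$ and $\Lambda^G$ depends on the combined $G$-action, not on $H$ and $\iota$ separately; controlling $\iota|_{\Lambda^H}$, and therefore the discriminant form of $\Lambda^G$, is most delicate for the larger groups $H=C_4,\ C_6,\ C_2\times C_4,\ C_3\times C_3$, where $\Lambda^H$ has large rank and mixed $p$-torsion in its discriminant and the square-$4$ vector must be extracted from an explicit discriminant-form computation rather than from a visible $U(2)$-summand. This case-by-case analysis is what I expect to occupy the bulk of the argument in Section \ref{Key Lemma Section}.
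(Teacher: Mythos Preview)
Your reductions are sound and match the paper: the identity $\mathrm{NS}(S)^G=\Lambda^G$, the signature argument, and the plan to determine $\Lambda^G$ case by case are all as in Section~\ref{Key Lemma Section}. But there is a genuine gap in the ``engine'' of your case analysis.

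You propose to pin down the discriminant form of $\Lambda^G$ using only (i) Nikulin's description of $\Lambda_H$ and (ii) the Enriques constraints on $\Lambda^\iota$ and $\Lambda_\iota$. The paper shows that this is not enough. A third, geometric ingredient is needed, and you have explicitly steered away from it: you note that one cannot pass to $S/\langle\iota\rangle$ because $\langle\iota\rangle$ is not normal, but you overlook that $H$ \emph{is} normal, so one can pass to the minimal resolution $\widetilde{S}$ of $S/H$, on which $\iota$ descends to an Enriques involution $\widetilde{\iota}$ (Lemma~\ref{LEM_iotatilde}). The pushforward $f\colon\Lambda\to H^2(\widetilde{S},\Z)$ then yields an embedding $L^\vee(|H|)\hookrightarrow K^{\widetilde{\iota}}(\HF)$ with $L:=\Lambda^G(\HF)$, giving the purely arithmetic constraint that $L$ and $L^\vee(|H|)$ are both even. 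More importantly, for $H=C_n$ with $3\le n\le 6$ the paper proves a geometric lemma (Lemma~\ref{LEM_nonexistenceU2}) showing that $K^{\widetilde{\iota}}$ contains no nef class of square $4$; via the embedding above this translates into the constraint $v^2\neq 2/n$ for all $v\in L^\vee$. This last condition is what eliminates the spurious candidates: for instance, for $H=C_3$ your lattice constraints leave both $U\oplus A_2(-1)$ and $U(3)\oplus A_2(-1)$ as possibilities for $L$, and only the geometric input rules out the latter; similarly for $H=C_4$ and $H=C_5$. The proof of Lemma~\ref{LEM_nonexistenceU2} is not lattice theory at all---it is a fixed-point count on $S$ using Theorem~\ref{Nik FixPoint} and the local structure of the covering $S\to S/H$.

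Two smaller points. First, your list of ``delicate'' cases omits $C_3$ and $C_5$, which are precisely where the extra geometric condition bites hardest. Second, the paper handles the composite groups $C_6$, $C_2\times C_2$, $C_2\times C_4$, $C_3\times C_3$ inductively, by restricting to a proper subgroup $H'\subset H$ and using the already-computed $\Lambda^{\langle H',\iota\rangle}$ as an ambient lattice; this device is worth incorporating into your plan.
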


First, we consider the (coarse) moduli space of K3 surfaces $S$ with a Calabi--Yau $G$-action.
Let $\Psi_G$ denote the set of actions $\psi\colon G\rightarrow \mathrm{O}(\Lambda)$ of $G$ on $\Lambda= U^{\oplus 3} \oplus E_{8} (-1)^{\oplus 2}$ 
such that there exist a K3 surface $S$ with a Calabi--Yau $G$-action and a $G$-equivariant isomorphism $H^2(S,\Z)\rightarrow \Lambda_\psi$.
Here we denote $\Lambda$ with a $G$-action $\psi$ by $\Lambda_\psi$.
The group $\mathrm{O}(\Lambda)$ acts on $\Psi_G$ by conjugation:
\begin{equation*}
 (\gamma\cdot \psi)(g)=\gamma \psi(g) \gamma^{-1},\quad
 \gamma\in \mathrm{O}(\Lambda),\ \psi\in\Psi_G, \ g\in G.
\end{equation*}
Define the period domain $\tilde{\mathcal{D}}^G$ by
\begin{equation*}
 \tilde{\mathcal{D}}^G
  :=\bigsqcup_{\psi\in \Psi'_G} \tilde{\mathcal{D}}^{G,\psi}, \quad
 \tilde{\mathcal{D}}^{G,\psi}:= \{
  \C\omega \in \mathbb{P}( (\Lambda_\psi)^H_\iota \otimes \C) \bigm|
  \langle \omega,\omega \rangle=0, \  \langle\omega,\overline{\omega} \rangle>0
 \},
\end{equation*}
 where $\Psi'_G$ is a complete representative system of the quotient $\Psi_\Lambda/\mathrm{O}(\Lambda)$.
For any K3 surface $S$ with a Calabi--Yau $G$-action, there exist a unique $\psi\in\Psi'_G$ and a $G$-equivariant isomorphism $\alpha\colon H^2(S,\Z)\rightarrow \Lambda_\psi$.
Under the period map, $S$ with the $G$-action corresponds to the period point
\begin{equation*}
 (\alpha\otimes\C)(H^{2,0}(S))\in \tilde{\mathcal{D}}^{G,\psi}
  \subset \tilde{\mathcal{D}}^G.
\end{equation*} 

\begin{Lem} \label{LEM_latticeaction}
Let $S$ be a K3 surface with a $G$-action.
If the induced action $\psi\colon G\rightarrow \mathrm{O}(\Lambda)$ (which is defined modulo the conjugate action of $\mathrm{O}(\Lambda)$) is an element in $\Psi_G$, the $G$-action on $S$ is a Calabi--Yau action.
\end{Lem}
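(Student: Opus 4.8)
The plan is to read off lattice-theoretic invariants of $\psi$ from a model realizing it, and then pin down, by a signature count, the character by which $G$ acts on $H^{2,0}(S)$. Since $\psi\in\Psi_G$, there exist a K3 surface $S'$ carrying a genuine Calabi--Yau $G$-action together with a $G$-equivariant marking $H^2(S',\Z)\to\Lambda_\psi$. The invariant lattices $(\Lambda_\psi)^H$ and $(\Lambda_\psi)^g$ (for $g\in G\setminus H$) depend only on $\psi$, so I may compute their signatures on $S'$. Because $H$ acts symplectically on $S'$ it fixes $\omega_{S'}$, and an $H$-invariant K\"ahler class (obtained by averaging) supplies a third positive direction; hence $(\Lambda_\psi)^H\otimes\R$ contains a positive-definite $3$-space and has signature $(3,\ast)$. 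Because each $g\in G\setminus H$ acts on $S'$ as an Enriques involution, Theorem \ref{thm_enriques_involution} gives $(\Lambda_\psi)^g\cong U(2)\oplus E_8(-2)$, of signature $(1,9)$.

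Next I turn to $S$ itself. Fix a $G$-equivariant marking $\alpha\colon H^2(S,\Z)\to\Lambda_\psi$ and let $\chi\colon G\to\C^\times$ be the character of the $G$-action on $H^{2,0}(S)$. Writing $\omega:=\alpha(\omega_S)$, equivariance gives $\psi(g)\omega=\chi(g)^{-1}\omega$ for all $g\in G$, so $\omega$ is a simultaneous eigenvector and the period plane $P:=\langle\operatorname{Re}\omega,\operatorname{Im}\omega\rangle_\R$ is a positive-definite $2$-plane preserved by $\psi(G)$. The key mechanism is that, since $\psi(h)$ is an isometry, eigenvectors whose eigenvalues have product different from $1$ are orthogonal; thus if $\chi(h)\ne1$ for some $h\in H$, then $\omega\perp(\Lambda_\psi)^h\otimes\C$. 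As $(\Lambda_\psi)^h\supseteq(\Lambda_\psi)^H$ also has signature $(3,\ast)$, this would place the positive-definite $2$-plane $P$ orthogonally to a positive-definite $3$-space inside a lattice of signature $(3,19)$, forcing $5$ positive directions -- a contradiction. Hence $\chi|_H=1$ and $H$ acts symplectically.

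It remains to treat $g\in G\setminus H$. Each such $g=h\iota$ satisfies $g^2=1$ by the relation $\iota h\iota=h^{-1}$, and $\chi(g)=\chi(h)\chi(\iota)=\chi(\iota)=\pm1$. If $\chi(\iota)=1$ then $\omega\in(\Lambda_\psi)^\iota\otimes\C$, which is impossible because the positive-definite $2$-plane $P$ cannot embed in the signature-$(1,9)$ space $(\Lambda_\psi)^\iota\otimes\R$; therefore $\chi(\iota)=-1$ and every $g\in G\setminus H$ is a nontrivial anti-symplectic involution. Since its invariant lattice $(\Lambda_\psi)^g\cong U(2)\oplus E_8(-2)$ is determined by $\psi$ (hence equals that of the model), Theorem \ref{thm_enriques_involution} shows $g$ is an Enriques involution on $S$. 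Together with the symplecticity of $H$, this verifies Definition \ref{def_cyg_k3}, so the $G$-action on $S$ is a Calabi--Yau action. The main obstacle is the transfer step: one must be certain that the signatures and isomorphism types of the invariant lattices are genuinely intrinsic to $\psi$ and therefore identical for $S$ and for the model $S'$. Once that is secured, the positive-definiteness of the period plane combined with the signature bound of $\Lambda$ rigidly determines $\chi$, and the rest follows from the lattice criterion for Enriques involutions.
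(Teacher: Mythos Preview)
Your proof is correct. The overall strategy coincides with the paper's: transfer lattice-theoretic invariants of $\psi$ from a model $S'$ on which the action is known to be Calabi--Yau, then invoke lattice characterizations to conclude the same for $S$. The one substantive difference is in how you handle the symplecticity of $H$. The paper simply cites Nikulin's result \cite[Theorem 3.1]{Ni} that a finite-order automorphism of a K3 surface is symplectic if and only if its coinvariant lattice is negative definite; since $(\Lambda_\psi)_h$ is negative definite (witnessed by $S'$), every $h\in H$ is symplectic on $S$. You instead give a direct signature argument determining the character $\chi$ on $H^{2,0}(S)$, which is essentially an unpacking of Nikulin's criterion (coinvariant negative definite $\Leftrightarrow$ invariant of signature $(3,\ast)$ $\Leftrightarrow$ the period $2$-plane lies in the invariant part). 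Both arguments then finish identically via Theorem \ref{thm_enriques_involution}.

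Two minor remarks on presentation. First, when you write that $P$ sits ``orthogonally to a positive-definite $3$-space'', what you mean is that $P\perp(\Lambda_\psi)^h\otimes\R$ and the latter contains a positive-definite $3$-space; the phrasing is slightly loose but the logic is sound. Second, you only explicitly match the invariant lattice $(\Lambda_\psi)^g\cong U(2)\oplus E_8(-2)$ against Theorem \ref{thm_enriques_involution}, whereas the theorem states both invariant and coinvariant; of course the coinvariant lattice is equally determined by $\psi$ and transfers from $S'$ in the same way, so the application is legitimate.
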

\begin{proof}
In general, a symplectic automorphism $g$ of a K3 surface $S$ of finite order is characterized as an automorphism such that $H^2(S,\Z)_g$ is negative definite \cite[Theorem 3.1]{Ni}.
Also, an Enriques involution is characterized by Lemma \ref{thm_enriques_involution}.
\end{proof}

\begin{Prop} \label{PROP_moduli_S}
For the moduli space $\mathcal{M}^G$ of K3 surfaces $S$ with a Calabi--Yau $G$-action, the period map defined above induces an isomorphism
\begin{equation*}
 \tau\colon
 \mathcal{M}^G \rightarrow
 \bigsqcup_{\psi\in \Psi'_G} \left( {\mathcal{D}}^{G,\psi}
  /\mathrm{O}(\Lambda,\psi) \right).
\end{equation*}
Here
\begin{align*}
 \mathcal{D}^{G,\psi}:=& \{
  \C\omega\in \tilde{\mathcal{D}}^{G,\psi} \bigm|
  \langle \omega,\delta \rangle\neq 0 ~ (\forall \delta \in \Delta_\psi)
 \}, \\
 \Delta_\psi :=& \{ \delta\in (\Lambda_\psi)_G \bigm|
  \delta^2=-2 \}, \\
 \mathrm{O}(\Lambda,\psi):=& \{
  \gamma \in \mathrm{O}(\Lambda) \bigm|
  \gamma \psi(g)=\psi(g) \gamma ~ (\forall g \in G)
 \}. 
\end{align*}
\end{Prop}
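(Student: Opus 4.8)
The plan is to show that $\tau$ is a bijection by combining the Global Torelli Theorem (Theorem \ref{Torelli}) with the surjectivity of the period map (Theorem \ref{SurjPed}), carrying the $G$-action along at every step, and then to upgrade bijectivity to an isomorphism of analytic spaces via local Torelli. Fix $\psi\in\Psi'_G$. For a K3 surface $S$ with a Calabi--Yau $G$-action of type $\psi$ I choose a $G$-equivariant marking $\alpha\colon H^2(S,\Z)\to\Lambda_\psi$ and set $\omega:=\alpha_\C(\omega_S)$. Since $H$ acts symplectically and each element of $G\setminus H$ acts anti-symplectically, $\omega$ is fixed by $H$ and negated by $\iota$, so $\C\omega\in\PP((\Lambda_\psi)^H_\iota\otimes\C)$; together with the Riemann relations $\langle\omega,\omega\rangle=0$ and $\langle\omega,\bar\omega\rangle>0$ this places $\C\omega$ in $\tilde{\mathcal D}^{G,\psi}$. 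Any two $G$-equivariant markings of type $\psi$ differ by an element of the centralizer $\mathrm O(\Lambda,\psi)$, so the period point is well defined modulo $\mathrm O(\Lambda,\psi)$; this is the content behind the target $\bigsqcup_\psi \mathcal D^{G,\psi}/\mathrm O(\Lambda,\psi)$.

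The pivotal lattice-theoretic point is the identification, through $\alpha$, of $\mathrm{NS}(S)_G$ with $(\Lambda_\psi)_G\cap\omega^\perp$, so that $\C\omega\in\mathcal D^{G,\psi}$ if and only if $\mathrm{NS}(S)_G$ contains no $(-2)$-vector. First I would verify this holds for every Calabi--Yau action: the surface $S$ is projective (it covers the Enriques surface $S/\langle\iota\rangle$), so averaging an ample class over the finite group $G$ produces a $G$-invariant ample class $A\in\mathrm{NS}(S)^G$; if some $\delta\in\mathrm{NS}(S)_G$ had $\delta^2=-2$ then $\pm\delta$ would be effective by Riemann--Roch, while $\langle\delta,A\rangle=0$ because $A$ is invariant and $\delta$ coinvariant, contradicting the positivity of an ample class against a nonzero effective class. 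Hence the period genuinely lands in $\mathcal D^{G,\psi}$, and every $(-2)$-vector of $\mathrm{NS}(S)$ has nonzero invariant component. This is exactly the hypothesis that lets Lemma \ref{LEM_reflection} run with a $G$-invariant vector: a generic $x\in\mathrm{NS}(S)^G\otimes\R$ with $x^2>0$ (available from the Key Lemma, which supplies a class of square $4$ in $(\Lambda_\psi)^G$) avoids all walls $\delta^\perp$ with $\delta^2=-2$, and the lemma then furnishes $\gamma\in\mathrm O(H^2(S,\Z))$ commuting with $G$, fixing $H^{2,0}(S)$, and moving $x$ into $\mathcal K_S$; in particular $\mathcal K_S$ contains a $G$-invariant class.

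For injectivity, suppose $S$ and $S'$ carry Calabi--Yau $G$-actions of type $\psi$ with equal periods in $\mathcal D^{G,\psi}/\mathrm O(\Lambda,\psi)$. Composing the marking of $S'$ with a suitable element of $\mathrm O(\Lambda,\psi)$ I may assume $\omega=\omega'$, so that $\phi:=(\alpha')^{-1}\circ\alpha$ is a $G$-equivariant isometry with $\phi_\C(H^{2,0}(S))=H^{2,0}(S')$. Taking a $G$-invariant K\"ahler class $\kappa\in\mathcal K_S$ from the previous paragraph, I apply Lemma \ref{LEM_reflection} on $S'$ to the $G$-invariant positive class $\phi(\kappa)$, obtaining a $G$-commuting $\gamma'$ with $\gamma'\phi(\kappa)\in\mathcal K_{S'}$. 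Then $\gamma'\phi$ satisfies both hypotheses of the Global Torelli Theorem, so there is a unique isomorphism $f\colon S'\to S$ with $f^*=\gamma'\phi$; its $G$-equivariance makes it an isomorphism of K3 surfaces with $G$-action. For surjectivity, given $\C\omega\in\mathcal D^{G,\psi}$ I realize $\omega$ as a period by Theorem \ref{SurjPed}, getting a marked K3 surface $(S,\alpha)$ with $\alpha_\C(\omega_S)=\C\omega$ and transporting $\psi$ to a lattice action of $G$ on $H^2(S,\Z)$. The defining inequality of $\mathcal D^{G,\psi}$ again gives a $G$-invariant K\"ahler class via Lemma \ref{LEM_reflection}, so each $\psi(g)$ meets the hypotheses of the Global Torelli Theorem and is induced by a unique automorphism $g_S$ of $S$; uniqueness forces $g\mapsto g_S$ to be a genuine $G$-action, and Lemma \ref{LEM_latticeaction} certifies it is a Calabi--Yau action because its lattice representation is the given $\psi\in\Psi_G$.

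I expect the main obstacle to be the equivariant matching of K\"ahler cones inside the Torelli step: one must guarantee a $G$-invariant K\"ahler class on each surface and correct the candidate Hodge isometry by a $G$-commuting reflection without spoiling $G$-equivariance, which is precisely why the absence of $(-2)$-vectors in $\mathrm{NS}(S)_G$---equivalently the defining condition of $\mathcal D^{G,\psi}$---is indispensable, and why Lemma \ref{LEM_reflection} is tailored to it. The remaining points are routine: that the Torelli-unique maps assemble into a group action, that $\tau$ descends to the stated quotients, and that $\tau$ is biholomorphic rather than merely bijective, the latter following from local Torelli together with the proper discontinuity of the arithmetic group $\mathrm O(\Lambda,\psi)$ acting on the type-IV domain $\mathcal D^{G,\psi}$.
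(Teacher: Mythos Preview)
Your approach is the paper's: produce a $G$-invariant K\"ahler class to show the period lands in $\mathcal D^{G,\psi}$, use Torelli together with Lemma~\ref{LEM_reflection} for injectivity, and SurjPed plus Torelli plus Lemma~\ref{LEM_latticeaction} for surjectivity. Two small corrections are worth making. First, the Key Lemma is unnecessary here: you already obtained a $G$-invariant ample class $A$ by averaging, and $A^2>0$ is all you need to run Lemma~\ref{LEM_reflection}; the Key Lemma enters only later, in the projective-model step (Lemma~\ref{LEM_doublequadric} and Proposition~\ref{PROP_genericity}).

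Second, and more substantively, in the surjectivity step you apply Theorem~\ref{SurjPed} first (with an unspecified $\kappa$) and only afterwards invoke Lemma~\ref{LEM_reflection} to manufacture a $G$-invariant K\"ahler class---but that lemma is stated for a \emph{geometric} $G$-action on $S$, which is exactly what you are still trying to construct. The paper avoids this circularity by choosing $\kappa$ in $(\Lambda_\psi)^G\otimes\R$ \emph{before} applying SurjPed: the defining condition of $\mathcal D^{G,\psi}$ says precisely that any $(-2)$-vector $\delta\in\omega^\perp$ lies outside $(\Lambda_\psi)_G$, hence pairs nontrivially with $(\Lambda_\psi)^G$, so a generic $\kappa\in(\Lambda_\psi)^G\otimes\R$ with $\kappa^2>0$ already satisfies both hypotheses of Theorem~\ref{SurjPed}. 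The resulting K\"ahler class is then $\psi(G)$-invariant from the outset, and Torelli applies directly to each $\psi(g)$ with no reflection step needed. Reordering your argument this way removes the circularity and makes Lemma~\ref{LEM_reflection} superfluous in the surjectivity direction.
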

\begin{proof}
Let $\C \omega=(\alpha\otimes\C)(H^{2,0}(S))\in \tilde{\mathcal{D}}^{G,\psi}$ be the period point of a K3 surface $S$ with a Calabi--Yau $G$-action.
We can check that $\C\omega$ modulo the action of $\mathrm{O}(\Lambda,\psi)$ is independent of the choice of $\alpha$.
Since $G$ is finite, there exists a $G$-invariant K\"ahler class
 $\kappa_S$ of $S$.
We have $\kappa:=(\alpha\otimes\R)(\kappa_S) \in (\Lambda_\psi)^G\otimes \R$.
For any $\delta\in\Delta_\psi$, we have $\langle \kappa,\delta \rangle= 0$, and thus $\langle \omega,\delta \rangle\neq 0$ (see Section \ref{SECT_k3}).
Therefore we see that $\C\omega \in \mathcal{D}^{G,\psi}$.
Assume that a K3 surface $S'$ with a Calabi--Yau $G$-action is mapped to the same point as $S$ by $\tau$.
Then there exists a $G$-equivariant isomorphism 
$\phi\colon H^2(S,\Z)\rightarrow H^2(S',\Z)$ such that $(\phi\otimes\C)(H^{2,0}(S))=H^{2,0}(S')$. 
By Lemma \ref{LEM_reflection}, we may assume that
 $(\phi\otimes\R)(\kappa_S)$ is a K\"ahler class of $S'$.
By Theorem \ref{Torelli}, $\phi$ induces a $G$-equivariant isomorphism between $S$ and $S'$.
Therefore, $\tau$ is injective.
Let $\C\omega_1 \in \mathcal{D}^{G,\psi}$.
By the definition of $\mathcal{D}^{G,\psi}$,
 for any $\delta\in\Lambda_\psi$ with $\delta^2=-2$ and
 $\langle \omega_1,\delta \rangle=0$,
 we have
 $\langle \delta,(\Lambda_\psi)^G \rangle \supsetneq \{ 0 \}$.
Hence there exists $\kappa_1\in (\Lambda_\psi)^G\otimes \R$
 such that $\omega_1$ and $\kappa_1$
 satisfy the conditions (\ref{cond_surj_1}) and (\ref{cond_surj_2})
 in Theorem \ref{SurjPed}.
Therefore, by Theorem \ref{SurjPed}, there exist a K3 surface $S_1$ and an isomorphism 
$\alpha_1\colon H^2(S_1,\Z)\rightarrow \Lambda_\psi$ such that 
$(\alpha_1^{-1} \otimes\C)(\C\omega_1)=H^{2,0}(S_1)$ and $(\alpha_1^{-1} \otimes\R)(\kappa_1)$ is a K\"ahler class of $S_1$.
By Theorem \ref{Torelli}, the $G$-action on $\Lambda_\psi$ induces a $G$-action on $S_1$ such that $\alpha_1$ is $G$-equivariant, which is a Calabi--Yau action by Lemma \ref{LEM_latticeaction}.
This implies the surjectivity of $\tau$.
%
\end{proof}

Next let us consider projective models of the K3 surfaces with a Calabi--Yau action.

\begin{Lem} \label{LEM_doublequadric}
Let $S$ be a K3 surface with a Calabi--Yau $G$-action.
Assume that there exists an element $v\in \mathrm{NS}(S)^G$ such that $v^2=4$.
Then there exists a $G$-invariant line bundle $\mathcal{L}$ on $S$ satisfying the following conditions.
\begin{enumerate}
\item \label{quad_first}
$\mathcal{L}^2=4$ and $h^0(\mathcal{L})=4$.
\item
The linear system $|\mathcal{L}|$ defined by $\mathcal{L}$ is base-point free and defines a map $\phi_{\mathcal{L}} \colon S\rightarrow \PP^3$.
\item
$\dim \phi_{\mathcal{L}}(S)=2$.
\item \label{quad_last}
The degree $\deg\phi_{\mathcal{L}}$ of the map
 $\phi_{\mathcal{L}} \colon S\rightarrow \phi_{\mathcal{L}}(S)$
 is $2$,
 and $\phi_{\mathcal{L}}(S)$ is isomorphic to either
 $\PP^1\times\PP^1$ or a cone (i.e.\ a nodal quadric surface).
\end{enumerate}
\end{Lem}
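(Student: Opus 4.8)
The plan is to first upgrade the class $v$ from the Key Lemma (Lemma \ref{LEM_keylemma}) into an honest $G$-invariant \emph{nef} line bundle of self-intersection $4$, and then to analyze its linear system by the classification of projective models of K3 surfaces, using the Enriques involution to pin down the geometry. Since $v^2=4>0$, Riemann--Roch on $S$ shows that one of $\pm v$ is effective; replacing $v$ by $-v$ (still $G$-invariant) I may assume $v$ is effective. To move $v$ into the nef cone $G$-equivariantly I would apply Lemma \ref{LEM_reflection}: the isometry $\gamma$ produced there fixes $H^{2,0}(S)$, commutes with $G$, and sends $v$ into the K\"ahler cone, so by the Global Torelli Theorem \ref{Torelli} it is realized by an automorphism of $S$ commuting with $G$; thus $\gamma(v)$ is a $G$-invariant nef class with $\gamma(v)^2=4$. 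When $v$ lies on a wall $\delta^\perp$ with $\delta^2=-2$ one cannot apply Lemma \ref{LEM_reflection} directly, and I would instead push $v$ into the closure of the ample cone by the reflection group generated by $(-2)$-classes, the image being $G$-invariant by uniqueness of the nef chamber representative. Either way I obtain a $G$-invariant line bundle $\mathcal{L}$ with $\mathcal{L}^2=4$ and $\mathcal{L}$ nef; nef-ness is all that is needed below.

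For (\ref{quad_first}), since $\mathcal{L}$ is nef with $\mathcal{L}^2=4>0$ we have $\chi(\mathcal{L})=2+\tfrac12\mathcal{L}^2=4$, while $h^2(\mathcal{L})=h^0(-\mathcal{L})=0$ (as $-\mathcal{L}$ is not effective) and $h^1(\mathcal{L})=0$ by the usual vanishing for nef and big line bundles on a K3 surface, so $h^0(\mathcal{L})=4$. For base-point-freeness I would use the structure theory of linear systems on K3 surfaces: if $|\mathcal{L}|$ had a fixed part, then because $\mathcal{L}^2=4$ one necessarily has $\mathcal{L}=3E+\Gamma$ for an elliptic curve $E$ and a smooth rational curve $\Gamma$ with $E\cdot\Gamma=1$, the fixed part being exactly $\Gamma$. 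But this fixed part is canonically attached to $\mathcal{L}$, hence $G$-invariant; choosing $g\in G\setminus H$, which acts on $S$ as a fixed-point-free Enriques involution (Definition \ref{def_cyg_k3}, Proposition \ref{PROP_OS}), $g$ would act on $\Gamma\cong\PP^1$ and so fix a point, contradicting $S^g=\emptyset$. Hence $|\mathcal{L}|$ is base-point free and defines a morphism $\phi_\mathcal{L}\colon S\to\PP^3$, and since $\mathcal{L}^2>0$ the image $X:=\phi_\mathcal{L}(S)$ is a surface.

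It remains to establish (\ref{quad_last}). Taking a basis of $H^0(\mathcal{L})$ makes $X$ nondegenerate, so $\deg X\ge 2$, and the projection formula $\deg\phi_\mathcal{L}\cdot\deg X=\mathcal{L}^2=4$ leaves only $(\deg\phi_\mathcal{L},\deg X)=(1,4)$ or $(2,2)$. The hard part is to exclude the birational case $(1,4)$, and here the Enriques involution is again decisive. As $\mathcal{L}$ is $G$-invariant, $G$ acts on $H^0(\mathcal{L})\cong\C^4$ and hence projectively on $\PP^3$, making $\phi_\mathcal{L}$ equivariant; let $\bar g\in\PGL_4$ denote the involution induced by $g\in G\setminus H$. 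If $\phi_\mathcal{L}$ were birational onto a quartic $X$, then $\bar g$ would restrict to an involution of $X$. The fixed locus of any involution of $\PP^3$ is a disjoint union of two linear subspaces, which necessarily meets the quartic $X$; a fixed point $p\in X$ then has $g$-invariant fibre $\phi_\mathcal{L}^{-1}(p)$, which is either a single point or a connected configuration of $(-2)$-curves, and in either case carries a $g$-fixed point (in the curve case because every automorphism of a connected ADE-configuration has one, cf. Lemma \ref{LEM_fixedpoint}), again contradicting $S^g=\emptyset$. Therefore $\deg\phi_\mathcal{L}=2$ and $X$ is a quadric surface in $\PP^3$; being the image of the irreducible reduced surface $S$ under a generically $2:1$ map, $X$ is reduced and irreducible, hence of rank $\ge 3$, so $X\cong\PP^1\times\PP^1$ or $X$ is the rank-$3$ quadric cone. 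This proves (\ref{quad_last}) and completes the proof.
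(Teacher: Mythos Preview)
Your argument is correct and follows essentially the same route as the paper: make $v$ nef via Lemma \ref{LEM_reflection}, invoke Saint-Donat's dichotomy for $|\mathcal{L}|$ with $\mathcal{L}^2=4$, and use the Enriques involution $g\in G\setminus H$ twice---once to kill the fixed curve $\Gamma$ in the $3E+\Gamma$ case, and once to kill $\deg\phi_{\mathcal{L}}=1$ via the fixed locus of an involution of $\PP^3$ together with Lemma \ref{LEM_fixedpoint}. Two minor remarks: the detour through the Global Torelli Theorem is unnecessary, since Lemma \ref{LEM_reflection} already gives $\gamma(v)\in\mathcal{K}_S\cap\mathrm{NS}(S)^G$ directly; and your careful treatment of the wall case (which the paper elides) is justified by the standard fact that the closed nef cone is a strict fundamental domain for the Weyl group, so the nef representative of the $W$-orbit of $v$ is unique and hence automatically $G$-invariant.
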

\begin{proof}
Note that the closure $\overline{\mathcal{K}}_S$ of $\mathcal{K}_S$ is the nef cone of $S$.
We may assume that $v$ is nef by Lemma \ref{LEM_reflection}.
Let $\mathcal{L}$ be a line bundle on $S$ representing $v$.
By \cite[Sections 4 and 8]{SD}, we have $h^0(\mathcal{L})=4$ and either of the following occurs.
\begin{itemize}
\item[(a)]
$\mathcal{L}$ is base-point free, $\dim \phi_{\mathcal{L}}(S)=2$, and $\deg\phi_{\mathcal{L}}=1$ or $2$.
Any connected component of $\phi_{\mathcal{L}}^{-1}(p)$ for any $p$ is either a point or an ADE-configuration.
\item[(b)]
$\mathcal{L}\cong\mathcal{O}_S(3E+\Gamma)$ and $|\mathcal{L}|=\{ D_1+D_2+D_3+\Gamma \bigm| D_i\sim E \}$, 
 where $E$ and $\Gamma\cong \PP^1$ are irreducible divisors such that $E^2=0$, $\Gamma^2=-2$ and $\langle E, \Gamma\rangle=1$.
\end{itemize}
In Case (b), the base locus $\Gamma\cong \PP^1$ of $|\mathcal{L}|$ is stable under the action of $\iota$ and thus $\iota$ has a fixed point in $\Gamma$, which is a contradiction.
Hence Case (a) occurs. 
Since the fixed locus of any (projective) involution of $\PP^3$ is at least $1$-dimensional, there exists a fixed point $p$ of the action of $\iota$ on $\phi_{\mathcal{L}}(S)$.
If $\deg \phi_{\mathcal{L}}=1$, then
 $S^{\iota}\neq\emptyset$ by Lemma \ref{LEM_fixedpoint}, which is a contradiction.
Hence $\deg\phi_{\mathcal{L}}=2$, and $\phi_{\mathcal{L}}(S)$ is an irreducible quadric surface in $\PP^3$, which is either $\PP^1\times \PP^1$ or a cone.
\end{proof}

\begin{Prop} \label{PROP_genericity}
For a generic point in $\mathcal{M}^G$, the corresponding K3 surface $S$ with a Calabi--Yau action $\chi\colon G\rightarrow \Aut(S)$ has a projective model as in Proposition \ref{PROP_triplet}.
More precisely, $S$ and $\chi$ are realized as the double covering of $\PP^1\times\PP^1$ branching along a $(4,4)$-curve and a projective $G$-action on $S$.
%
\end{Prop}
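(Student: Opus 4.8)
The plan is to feed a generic $S\in\mathcal{M}^G$ into Lemma \ref{LEM_keylemma} and Lemma \ref{LEM_doublequadric}. The Key Lemma furnishes $v\in\mathrm{NS}(S)^G$ with $v^2=4$, and Lemma \ref{LEM_doublequadric} upgrades it to a $G$-invariant, base-point-free line bundle $\mathcal{L}$ with $h^0(\mathcal{L})=4$ whose associated morphism $\phi_\mathcal{L}\colon S\to Q\subset\PP^3$ is a degree-two cover of a quadric $Q$ that is either $\PP^1\times\PP^1$ or a cone. Since $\mathcal{L}$ is $G$-invariant, $G$ acts linearly on $H^0(\mathcal{L})\cong\C^4$, hence projectively on $\PP^3$ preserving $Q$ and the branch divisor $B\subset Q$; the covering involution commutes with this action, so the Calabi--Yau action $\chi$ descends to a projective $G$-action on $Q$. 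Everything then reduces to two points: that $Q\cong\PP^1\times\PP^1$ for generic $S$, and that the descended data coincides with one of the Horikawa models tabulated in Proposition \ref{PROP_triplet}.

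The main obstacle is excluding the quadric cone, and I would treat it by a lattice-theoretic genericity argument, which is exactly what the statement requires. For a very general period point $\C\omega\in\mathcal{D}^{G,\psi}$ one has $\mathrm{NS}(S)=\bigl((\Lambda_\psi)^H_\iota\bigr)^{\bot}_{\Lambda_\psi}$, a lattice depending only on $\psi$, since $\omega$ avoids the countably many hyperplanes $\langle\omega,x\rangle=0$ cut out by primitive $x\in(\Lambda_\psi)^H_\iota$. On this fixed lattice one checks directly, type by type, that the nef class $v$ splits as $v=e_1+e_2$ with $e_i^2=0$ and $\langle e_1,e_2\rangle=2$, i.e.\ that $S$ carries two distinct elliptic pencils whose product map realises $\phi_\mathcal{L}$ as a double cover of $\PP^1\times\PP^1$. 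The cone, by contrast, requires an isotropic class meeting $v$ with the exceptional intersection number forced by Saint-Donat's analysis \cite{SD}, a condition cutting out a proper closed subset of the moduli. A complementary geometric route may even remove the word ``generic'': the vertex $P$ of a cone is its unique singular point, hence fixed by all of $G$ and in particular by the Enriques involution $\iota$, so $\phi_\mathcal{L}^{-1}(P)$ is an $\iota$-stable point or ADE-configuration; combined with Lemma \ref{LEM_fixedpoint} this should yield a fixed point of $\iota$ on $S$, contradicting Theorem \ref{thm_enriques_involution}. The delicate point in this second route is ruling out that $\iota$ merely interchanges the two preimages of $P$, which is why I would rely on the genericity argument as the primary one.

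Once $Q\cong\PP^1\times\PP^1$ is secured, identifying the model is comparatively routine and reuses machinery already in place. Because $S$ is a K3 surface, the double cover $\phi_\mathcal{L}\colon S\to Q$ is branched along $B\in|{-}2K_Q|=|\mathcal{O}_Q(4,4)|$, and the smoothness of $S$ (as the minimal resolution) forces $B$ to have at worst ADE singularities; thus $S$ is exactly the Horikawa model of Section \ref{Construction} with $F$ defining the $(4,4)$-curve $B$. The descended action lies in $\Aut(\PP^1\times\PP^1)=\bigl(\PGL(2,\C)\times\PGL(2,\C)\bigr)\rtimes\langle\text{swap}\rangle$, and the analysis already carried out in the proof of Proposition \ref{PROP_triplet}---each $g\in G\setminus H$ preserves each ruling and is anti-symplectic while $H$ acts symplectically, together with Lemma \ref{LEM_detdet}---pins the action down, after a coordinate change on each $\PP^1$, to one of the linear forms in the table, while fixed-point-freeness of the Calabi--Yau action supplies condition (\ref{ITEM_nofixed}). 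Hence $(S,\mathcal{L},\chi)$ is isomorphic to a triplet of Proposition \ref{PROP_triplet}, as claimed. I expect the cone exclusion to be the only genuinely delicate step; the branch-curve and linearisation arguments are essentially bookkeeping built on results established earlier in the paper.
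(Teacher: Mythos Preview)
Your overall plan---feeding the Key Lemma into Lemma \ref{LEM_doublequadric} and then identifying the resulting Horikawa model via the analysis already in Proposition \ref{PROP_triplet}---matches the paper's. The divergence is in how the quadric cone is excluded.

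Your primary route (a), checking case-by-case on the generic N\'eron--Severi lattice that the nef class $v$ splits as $e_1+e_2$ with $e_i^2=0$, is workable but heavier than you suggest: it presupposes explicit knowledge of $\Lambda^G$ (Proposition \ref{H(S)^G}), which in the paper is established only \emph{after} the Key Lemma, and not every $v\in\Lambda^G$ with $v^2=4$ need split this way when $\Lambda^G$ has a nontrivial negative-definite summand. Your route (b) correctly identifies the obstacle---$\iota$ might merely swap the two preimages of the vertex---and the paper's argument turns exactly this obstacle into the proof. If $\phi_{\mathcal{L}}(S)$ is a cone with vertex $p$, the Stein factorization $\phi_{\mathcal{L}}=u\circ\theta$ gives two points $\bar{p}_1,\bar{p}_2$ in $u^{-1}(p)$, interchanged by every $g\in G\setminus H$ (Lemma \ref{LEM_fixedpoint}), and hence two $(-2)$-curves $C_i=\theta^{-1}(\bar{p}_i)\subset S$. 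Then $C_1-C_2\in H^2(S,\Z)^H_\iota$ with $(C_1-C_2)^2=-4$, forcing the Picard number above the generic value $22-\rank H^2(S,\Z)^H_\iota$. Thus the cone case is confined to the countably many period hyperplanes $\langle\omega,\delta\rangle=0$ for $\delta\in(\Lambda_\psi)^H_\iota$ with $\delta^2=-4$. This is more uniform than your route (a)---no case analysis is needed---and the very phenomenon you flagged as a difficulty is the mechanism of the proof.
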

\begin{proof}
Let $S$ be a K3 surface with a Calabi--Yau $G$-action. 
By Lemmas \ref{LEM_keylemma} and \ref{LEM_doublequadric}, there exists a $G$-invariant line bundle $\mathcal{L}$ satisfying the conditions (\ref{quad_first})--(\ref{quad_last}) in Lemma $\ref{LEM_doublequadric}$.
Let $\phi_{\mathcal{L}}=u\circ \theta$
 be the Stein factorization of $\phi_{\mathcal{L}}$. 
Then $\theta(S)$ is a normal surface possibly with ADE-singularities, and $u$ is a finite map of degree $2$. 
Assume that $\phi_{\mathcal{L}}(S)$ is a cone with the singular point $p$.
By Lemma \ref{LEM_fixedpoint}, $u^{-1}(p)$ consists of two points $\overline{p}_1,\overline{p}_2$, which are interchanged by any $g\in G\setminus H$.
Hence each $\theta^{-1}(\overline{p}_i)$ is a $(-2)$-curve $C_i$ on $S$ and $C_1-C_2\in H^2(S,\Z)^H_\iota$ with $(C_1-C_2)^2=-4$.
In particular, the Picard number of $S$ is greater than the generic Picard number $=22-\operatorname{rank} H^2(S,\Z)^H_\iota$. 
Therefore, if the period point of $(S,\chi)$ is contained in
\begin{equation*}
 \bigsqcup_{\psi\in \Psi'_G}
  \{ \C\omega\in \mathcal{D}^{G,\psi} \bigm|
  \langle \omega,\delta \rangle\neq 0 ~
  (\forall \delta \in \Delta'_\psi)
  \}, \quad
 \Delta'_\psi := \{ \delta\in (\Lambda_\psi)^H_\iota \bigm|
  \delta^2=-4 \},
\end{equation*}
 then $\phi_{\mathcal{L}}(S)\cong \PP^1\times\PP^1$ and the branching curve $B$ of $u$ has at most ADE-singularities.
Moreover, since the canonical bundle of $S$ is trivial, the bidegree of $B$ is $(4,4)$.
%
%
\end{proof}

Let $S$ be a K3 surface with a Calabi--Yau action $\chi\colon G\rightarrow \Aut(S)$.
We will prove (Theorem \ref{THM_uniqueness})
 that the pair $(S,\chi(G))$ is unique up to equivariant deformation.
A pair $(S,\chi)$ represents a K3 surface $S$ with a Calabi--Yau $G$-action with a fixed group $G$, 
while a pair $(S,\chi(G))$ represents a K3 surface $S$ with a subgroup of $\Aut(S)$ which gives a Calabi--Yau $G$-action.
The difference is whether or not we keep track of a way to identify $G$ with a subgroup of $\Aut(S)$.
Let $\Aut_H(G)$ denote the subgroup of $\Aut(G)$ consisting of elements which preserve $H$.
The group $\Aut_H(G)$ acts on the moduli space $\mathcal{M}^G$ of pairs $(S,\chi)$ from the right by
\begin{equation*}
 (S,\chi)\cdot \sigma=(S,\chi\circ \sigma), \quad
 \sigma\in \Aut_H(G).
\end{equation*}
Note that $\Aut(G)$ does not necessarily act on $\mathcal{M}^G$ because the action of $H$ on $S$ is symplectic by definition.
The orbit of $(S,\chi)$ under the action of $\Aut_H(G)$ is identified with $(S,\chi(G))$, and $\mathcal{M}^G/\Aut_H(G)$ is considered as the moduli space of pairs $(S,\chi(G))$.

\begin{Thm} \label{THM_uniqueness}
Let $H=C_n \ (1\leq n\leq 6)$, $C_2 \times C_2$ or $C_2\times C_4$.
Then there exists a unique subgroup of $\mathrm{O}(\Lambda)$ induced by a Calabi--Yau $G$-action up to conjugation in $\mathrm{O}(\Lambda)$, that is,
\begin{equation} \label{eq_psi}
 \Psi_G = \mathrm{O}(\Lambda) \cdot \psi \cdot \Aut_H(G), \quad
 \psi\in\Psi_G.
\end{equation}
Moreover, we have
\begin{equation} \label{eq_dom}
 \mathcal{M}^G/\Aut_H(G) \cong
 {\mathcal{D}}^{G,\psi} / \Gamma_\psi, \quad
 \Gamma_\psi := \{ \gamma \in \mathrm{O}(\Lambda) \bigm|
  \gamma \psi(G) \gamma^{-1}=\psi(G) \}.
\end{equation}
In particular, the moduli space $\mathcal{M}^G/\Aut_H(G)$ of pairs $(S,\chi(G))$ as above is irreducible, and a pair $(S,\chi(G))$ exists uniquely up to equivariant deformation.
\end{Thm}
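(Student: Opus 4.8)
My plan is to make \eqref{eq_psi} the crux of the proof and to deduce \eqref{eq_dom}, the irreducibility, and the deformation-uniqueness from it by formal arguments. Granting \eqref{eq_psi}, recall from Proposition \ref{PROP_moduli_S} that $\mathcal{M}^G\cong\bigsqcup_{\psi\in\Psi'_G}\bigl(\mathcal{D}^{G,\psi}/\mathrm{O}(\Lambda,\psi)\bigr)$, where $\Psi'_G$ represents $\Psi_G/\mathrm{O}(\Lambda)$ and $\mathrm{O}(\Lambda,\psi)$ is the centralizer of $\psi(G)$. The group $\Aut_H(G)$ acts on this disjoint union by $(S,\chi)\mapsto(S,\chi\circ\sigma)$, permuting the components via $[\psi]\mapsto[\psi\circ\sigma]$, and by \eqref{eq_psi} this permutation is transitive. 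The decisive observation is that conjugation in $\mathrm{O}(\Lambda)$ automatically preserves $H$: indeed $g\in H$ if and only if $\psi(g)$ is symplectic, which by \cite[Theorem 3.1]{Ni} (see Lemma \ref{LEM_latticeaction}) is equivalent to $(\Lambda_\psi)_{\psi(g)}$ being negative definite, a property invariant under isometry. Hence conjugation by any $\gamma\in\Gamma_\psi$ induces an element of $\Aut_H(G)$, giving a homomorphism $\Gamma_\psi\to\Aut_H(G)$ with kernel $\mathrm{O}(\Lambda,\psi)$ and image the stabilizer of $[\psi]$. Collapsing the transitive $\Aut_H(G)$-action on components and replacing the centralizer quotient by the resulting normalizer quotient yields exactly \eqref{eq_dom}. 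Finally $\tilde{\mathcal{D}}^{G,\psi}$ is an open subset of the type IV period domain attached to the signature $(2,\ast)$ lattice $(\Lambda_\psi)^H_\iota$; its two complex-conjugate components are interchanged inside the quotient by $\Gamma_\psi$, so $\mathcal{D}^{G,\psi}/\Gamma_\psi$ is irreducible. The same irreducibility is visible from the connected family of Horikawa models over the vector space $H^0(\mathcal{O}_Z(4,4))^G$ (Propositions \ref{PROP_triplet} and \ref{PROP_genericity}), and it is precisely the assertion that all pairs $(S,\chi(G))$ are equivariantly deformation equivalent.

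It thus remains to prove \eqref{eq_psi}. Nonemptiness of $\Psi_G$ is provided by the Horikawa models of Proposition \ref{PROP_triplet}. For the uniqueness I would take two actions $\psi,\psi'\in\Psi_G$ and manufacture $\gamma\in\mathrm{O}(\Lambda)$ and $\sigma\in\Aut_H(G)$ with $\gamma\,\psi(g)\,\gamma^{-1}=\psi'(\sigma(g))$ for every $g\in G$, by decomposing $\Lambda$ along the $G$-action and matching the pieces in turn. Since $H$ acts symplectically, the coinvariant lattice $\Lambda_H$ is negative definite, and because $H$ is abelian, Nikulin's classification of symplectic actions \cite{Ni} determines both its isometry type and the $H$-action on it from the isomorphism class of $H$ alone, uniquely up to $\mathrm{O}(\Lambda_H)$ and $\Aut(H)$; the $\Aut(H)$-ambiguity is absorbed through the natural surjection $\Aut_H(G)\to\Aut(H)$. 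On the orthogonal complement $\Lambda^H=(\Lambda_H)^\perp$ the commuting involution $\iota$ acts, and by Lemma \ref{Involution} it splits, up to a finite $2$-elementary index, as $(\Lambda^H)^\iota\oplus(\Lambda^H)_\iota=\Lambda^G\oplus\Lambda^H_\iota$.

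The heart of the argument is the rigidity of the pair $(\Lambda^H,\iota)$. I would compute, case by case in $H$, the ranks, signatures, and discriminant forms of $\Lambda^G$ and $\Lambda^H_\iota$, using the global constraint that $\iota$ is an Enriques involution on all of $\Lambda$, so that $\Lambda^\iota\cong U(2)\oplus E_8(-2)$ and $\Lambda_\iota\cong U\oplus U(2)\oplus E_8(-2)$ by Theorem \ref{thm_enriques_involution}, together with the Key Lemma \ref{LEM_keylemma}, which furnishes a vector of square $4$ in $\Lambda^G$. In each case the relevant invariant lattice contains a summand isomorphic to $U$ or $U(2)$, so Theorem \ref{Nik Genus} forces its genus to consist of a single class, and Proposition \ref{Nik Disc form} together with Nikulin's uniqueness criterion \cite{Ni2} renders the primitive embeddings of $\Lambda_H$ and of $\Lambda^H_\iota$ into $\Lambda$, and the isotropic gluing presenting $\Lambda$ as an overlattice, unique up to $\mathrm{O}(\Lambda)$. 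Assembling the unique $H$-action on $\Lambda_H$, the unique $\iota$-action on $\Lambda^H$, and the unique gluing produces the required $\gamma$ and $\sigma$, establishing \eqref{eq_psi}.

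The step I expect to be the principal obstacle is exactly this last rigidity and gluing. One must verify, in every case---most delicately for $H=C_2\times C_4$ and for the larger cyclic groups, where $\Lambda_H$ and the $2$-elementary discriminant forms are most intricate---that the signature and discriminant-form hypotheses of Nikulin's uniqueness results are genuinely satisfied, and, crucially, that the involution $\iota$ extends compatibly across the glued embedding, so that one obtains an isometry of $\Lambda$ intertwining the two full $G$-actions and not merely an abstract isometry of the underlying lattices.
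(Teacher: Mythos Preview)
Your formal framework---deducing \eqref{eq_dom} and irreducibility from \eqref{eq_psi} via the $\Aut_H(G)$-action on the components of $\mathcal{M}^G$ and the centralizer/normalizer identification $\Gamma_\psi/\mathrm{O}(\Lambda,\psi)\cong$ stabilizer---is correct and is exactly what the paper does. The real difference lies in how \eqref{eq_psi} itself is established, and here the paper runs the implication in the \emph{opposite} direction from you.

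The paper does not attempt a direct lattice-theoretic classification of $G$-actions on $\Lambda$. Instead it proves the connectedness of $\mathcal{M}^G/\Aut_H(G)$ geometrically and then reads off \eqref{eq_psi} as a consequence: by Proposition~\ref{PROP_genericity} (which rests on the Key Lemma), a generic point of \emph{every} component of $\mathcal{M}^G$ is realized as an $H$-equivariant Horikawa model; these are classified in Proposition~\ref{PROP_triplet}, and then Proposition~\ref{PROP_222section} (the $(2,2,2)$-hypersurface realization in $(\PP^1)^3$) is invoked precisely to show that the two inequivalent Horikawa types occurring for $H=C_2$ and $H=C_4$ are nevertheless deformation equivalent once the polarization is forgotten. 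This last step is the one your secondary Horikawa remark misses: the family over $H^0(\mathcal{O}_Z(4,4))^G$ alone is not enough for $H=C_2,C_4$.

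Your proposed lattice-theoretic route to \eqref{eq_psi} is reasonable in principle but considerably harder, and the gaps go beyond the gluing you flag. You would also need to pin down the $\iota$-action on $\Lambda_H$ (since $\iota h\iota=h^{-1}$, the involution $\iota$ preserves $\Lambda_H$ and acts nontrivially there), to compute $\Lambda^H_\iota$ explicitly (which the paper never does), and to verify Nikulin's uniqueness hypotheses for the several primitive embeddings case by case. The paper's geometric argument sidesteps all of this. Finally, your claim that $\Gamma_\psi$ interchanges the two components of $\tilde{\mathcal{D}}^{G,\psi}$ requires exhibiting an orientation-reversing element, which you do not do; the paper avoids this issue entirely by deducing connectedness from the irreducible parameter space of Horikawa models.
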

\begin{proof}
By Proposition \ref{PROP_genericity}, a generic pair $(S,\chi(G))$ has a projective model as in Proposition \ref{PROP_triplet}.
Hence the existence of Calabi--Yau $G$-actions follows from Proposition \ref{PROP_triplet}.
Also, the connectedness of $\mathcal{M}^G/\Aut_H(G)$ follows from Propositions \ref{PROP_triplet} and \ref{PROP_222section}.
The stabilizer subgroup $\Sigma$ of $\mathcal{D}^{G,\psi}/\mathrm{O}(\Lambda,\psi)$ in $\Aut_H(G)$ is given by
\begin{equation*}
 \Sigma= \{ \sigma\in \Aut_H(G) \bigm|
  \psi\circ\sigma=\gamma\cdot \psi ~
  (\exists \gamma\in \mathrm{O}(\Lambda)) \}.
\end{equation*}
Since $\Sigma$ is naturally isomorphic to $\Gamma_\psi/\mathrm{O}(\Lambda,\psi)$, we can check (\ref{eq_psi}) and (\ref{eq_dom}) by Proposition \ref{PROP_moduli_S}.
\end{proof}

\begin{Thm} \label{non-existence C3^2}
For $G \cong (C_3 \times C_3) \rtimes C_2$, there does not exists a K3 surface with a Calabi--Yau $G$-action, that is, $\mathcal{M}^G=\emptyset$. 
\end{Thm}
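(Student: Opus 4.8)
I want to show that the group $G\cong (C_3\times C_3)\rtimes C_2$ cannot act on any K3 surface as a Calabi--Yau action. By Definition~\ref{def_cyg_k3}, such an action would force $H=C_3\times C_3$ to act symplectically, while every element of $G\setminus H$ acts as an Enriques (hence anti-symplectic, fixed-point-free) involution. The plan is to derive a contradiction from these two requirements together.

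**Approach via the symplectic action of $H$.** First I would focus entirely on the symplectic part. A symplectic action of a finite group on a K3 surface is highly constrained: by Theorem~\ref{Nik FixPoint} the order of any symplectic automorphism is at most $8$, and more importantly the abstract groups that admit symplectic K3 actions were classified by Mukai. In particular, whether $C_3\times C_3$ itself can act symplectically on a K3 surface is exactly the kind of question one checks against Nikulin's and Mukai's lists. If $C_3\times C_3$ already fails to embed as a symplectic group, we are done immediately; but I expect the authors instead want a self-contained lattice-theoretic contradiction, so I would compute with the coinvariant lattice. For a symplectic action of $H$ the coinvariant lattice $\Lambda_H \subset H^2(S,\mathbb{Z})$ is negative definite (by the characterization cited in the proof of Lemma~\ref{LEM_latticeaction}), of rank $22-\operatorname{rank}\Lambda^H$, and it carries the $H$-action with $\Lambda^H\cap\Lambda_H=0$. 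I would pin down $\operatorname{rank}\Lambda_H$ for $H=C_3\times C_3$ using the fixed-point data of order-$3$ symplectic automorphisms from Theorem~\ref{Nik FixPoint} together with a Lefschetz/character count.

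**Bringing in the involution $\iota$.** The crucial extra ingredient is that $\iota$ acts on $\Lambda$ with $\iota h\iota = h^{-1}$ for all $h\in H$, and $\iota$ is an Enriques involution, so by Theorem~\ref{thm_enriques_involution} its invariant and coinvariant lattices are the fixed isomorphism types $U(2)\oplus E_8(-2)$ and $U\oplus U(2)\oplus E_8(-2)$. Thus $\iota$ induces an involution on the $H$-module $\Lambda$ (and on $\Lambda_H$) that inverts $H$, making $\Lambda_H$ a module over the integral group ring of the full group $G$. I would exploit the rational representation theory: over $\mathbb{Q}$, the nontrivial irreducible representations of $C_3\times C_3$ are $2$-dimensional, and the involution $\iota$ permutes/combines them so that the resulting $G$-representation on $\Lambda_H\otimes\mathbb{Q}$ has dimension divisible by a fixed integer. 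Matching this forced rank against the rank allowed inside $\Lambda$, subject to the signature constraint (the whole lattice has signature $(3,19)$ and $\Lambda_H$ is negative definite), is where the numerical contradiction should appear.

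**Main obstacle.** The hard part will be making the representation-theoretic bookkeeping genuinely rigorous over $\mathbb{Z}$ rather than just $\mathbb{Q}$, and in particular showing that \emph{no} compatible configuration of invariant/coinvariant lattices exists, not merely that the ``obvious'' one fails. Concretely, I expect the tightest step to be reconciling the discriminant forms: $\Lambda^\iota$ and $\Lambda_\iota$ have prescribed discriminant forms (they are $2$-elementary), the coinvariant lattice $\Lambda_H$ is a $3$-elementary negative definite lattice of a rank dictated by the $C_3\times C_3$ symplectic data, and these must glue compatibly inside the unimodular $\Lambda$ via Propositions~\ref{Nik Disc form} and \ref{Nik Isotropic}. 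I anticipate the contradiction emerges either because the required negative-definite $3$-elementary lattice of that rank and discriminant simply does not exist, or because its gluing to the $2$-elementary $\iota$-lattices cannot be arranged $G$-equivariantly. Isolating precisely which of these obstructions fires—and verifying it cleanly—is the step I would expect to absorb most of the work.
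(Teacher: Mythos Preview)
Your approach is genuinely different from the paper's, and there is a real risk it does not terminate in a contradiction.

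The paper's proof is a two-line appeal to the machinery already built: by the Key Lemma (Lemma~\ref{LEM_keylemma}) and Proposition~\ref{PROP_genericity}, a generic K3 surface with a Calabi--Yau $G$-action must be realized as a $G$-equivariant double cover of $\PP^1\times\PP^1$ branched along a $(4,4)$-curve with at most ADE singularities; but the case-analysis in the proof of Proposition~\ref{PROP_triplet} already showed that for $H=C_3\times C_3$ every candidate branch polynomial is divisible by $x^2y^2$, forcing a non-ADE singularity. So the obstruction is \emph{geometric}: the required equivariant Horikawa model simply does not exist.

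Your plan, by contrast, looks for a purely lattice-theoretic contradiction in the $G$-module structure of $\Lambda$. The danger is that the paper's own computations suggest no such contradiction is there to find. First, $C_3\times C_3$ \emph{does} act symplectically on K3 surfaces (it appears in Nikulin's table with $\rk\Lambda^H=6$), so your initial hope fails. More seriously, the Key Lemma is proved in Section~\ref{Key Lemma Section} \emph{including} the case $H=C_3\times C_3$, and the analysis there reaches the consistent conclusion $\Lambda^G\cong U(2)$ without encountering any discriminant-form or gluing incompatibility. All of the $2$- and $3$-elementary data you propose to examine are worked through there, and they mesh. So the invariants you list---ranks, signatures, discriminant forms of $\Lambda^\iota$, $\Lambda_\iota$, $\Lambda_H$, and their gluings via Propositions~\ref{Nik Disc form} and~\ref{Nik Isotropic}---are unlikely to yield the contradiction on their own. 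If a lattice-only proof exists, it would have to use something finer than what you outline (e.g.\ non-existence of the full $G$-equivariant embedding into $\Lambda$, not just compatibility of the summands), and that is substantially harder than the paper's geometric route.
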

\begin{proof}
As in the proof of Theorem \ref{THM_uniqueness}, a generic pair $(S,\chi(G))$ admits a projective model as in Proposition \ref{PROP_triplet}. 
However, there is no such a projective model. 
\end{proof}

\subsection{Moduli Spaces of Complex Structures}
\label{CMS}
In this section, we describe the complex moduli spaces of Calabi--Yau threefolds of type K.
We in particular show the irreducibility of the moduli space with a prescribed Galois group $G$.  
Throughout this section, we fix a semi-direct product decomposition $G=H\rtimes \langle \iota \rangle$ as in Proposition \ref{PROP_OS}.
In Section \ref{SECT_uniqueness}, we studied the moduli space of K3 surfaces $S$ with a Calabi--Yau $G$-action, which is denoted by $\mathcal{M}_{S}^G$ instead of $\mathcal{M}^G$ in this section.

Let us consider the moduli problem of elliptic curves with a $G$-action given in Proposition \ref{PROP_OS}, which we also call a Calabi--Yau $G$-action. 
Let $\mathcal{M}_{E}^G$ denote the moduli space of elliptic curves with a Calabi--Yau $G$-action.
An element in $\mathcal{M}_{E}^G$ is the isomorphism class of an elliptic curve with a faithful translation action of $H$.
A faithful translation action of $C_2 \times C_2$ on an elliptic curve $E$ is given by a level $2$ structure on $E$. 
Therefore $\mathcal{M}_{E}^G$ for $H={C_2 \times C_2}$ is identified with the (non-compact) modular curve $Y(2):=\mathbb{H}/\Gamma(2)$.
In the same manner, $\mathcal{M}^G_{E}$ for $H={C_n}$ is identified with the modular curve $Y_1(n):=\mathbb{H}/\Gamma_1(n)$. 
For $H=C_2 \times C_4$, we want the moduli space of elliptic curves with  linearly independent $2$- and $4$-torsion points. 
It is not difficult to see that it is identified with the modular curve $Y(2 \mid 4):=\mathbb{H}/\Gamma(2 \mid 4)$, where 
$$
\Gamma(2 \mid 4):=\left\{\begin{bmatrix}
        a & b\\
        c & d\\
        \end{bmatrix} \in \SL(2,\Z) \Biggm| a-1 \equiv c \equiv 0 \bmod 2, \ b\equiv d-1\equiv 0 \bmod 4\right\}. 
$$ 
We summarize the argument above in the following lemma. 

\begin{Lem} \label{LEM_ellptic_moduli}
Let $1 \le n \le 6$. 
The moduli space $\mathcal{M}_{E}^{G}$ of elliptic curves with a Calabi--Yau $G$-action
 is irreducible and given by the following.
\begin{center}
 \begin{tabular}{|c|c|c|c|} \hline  
 $G$ & $C_2 \times C_2 \times C_2$ & $C_n \rtimes C_2$ & $C_2 \times D_8$\\ \hline 
 $\mathcal{M}_{E}^{G}$ &  $Y(2)$ &   $Y_1(n)$& $Y(2 \mid 4)$\\  \hline
 \end{tabular}
 \end{center}
\end{Lem}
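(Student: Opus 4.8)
The plan is to verify the lemma case by case according to the structure of $H$, identifying in each case the relevant moduli space of elliptic curves equipped with the prescribed torsion data. By Proposition \ref{PROP_OS}, a Calabi--Yau $G$-action on an elliptic curve $E$ amounts to the data $\iota_E = -\id_E$ together with a faithful translation action of $H = H_E$, where $H_E = \langle t_a\rangle \times \langle t_b \rangle \cong C_n \times C_m$ is generated by translations by torsion points. Since $\iota_E$ acts as negation, the only moduli lie in the choice of $E$ together with its distinguished torsion subgroup, so $\mathcal{M}_E^G$ is precisely a moduli space of elliptic curves with a level structure. The key observation I would emphasize is that specifying a faithful translation action of $H$ is the same as specifying an embedding $H \hookrightarrow E[\,\cdot\,]$ into the appropriate torsion, i.e. a choice of torsion points generating a subgroup isomorphic to $H$.

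First I would treat the cyclic case $H = C_n$, corresponding to $G = C_n \rtimes C_2$. Here the datum is a point of exact order $n$ on $E$, i.e. a cyclic subgroup of order $n$ together with a generator; this is exactly a $\Gamma_1(n)$-level structure, so $\mathcal{M}_E^G \cong Y_1(n)$, which is irreducible since $\mathbb{H}$ is connected and $\Gamma_1(n) \subset \SL(2,\Z)$ acts on it. Next, for $H = C_2 \times C_2$ (so $G = C_2 \times C_2 \times C_2$), the datum is a full level-$2$ structure, namely an identification of $E[2]$ with $(\Z/2)^2$ compatible with a choice of basis, giving $\mathcal{M}_E^G \cong Y(2) = \mathbb{H}/\Gamma(2)$. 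Finally, for $H = C_2 \times C_4$ (so $G = C_2 \times D_8$), I would spell out the datum as a pair consisting of a point of order $2$ and a point of order $4$ that are linearly independent in $E[4]$; writing these in terms of a symplectic basis of $E[4]$ and computing the stabilizer of this configuration inside $\SL(2,\Z)$ yields the congruence subgroup $\Gamma(2\mid 4)$ defined in the statement, whence $\mathcal{M}_E^G \cong Y(2\mid 4)$.

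In each case, the fact that the corresponding subgroup $\Gamma \subset \SL(2,\Z)$ acts on the connected upper half-plane $\mathbb{H}$ immediately gives irreducibility of the quotient modular curve, completing the lemma. The one point requiring genuine (if still elementary) verification is the $C_2 \times C_4$ case: I would need to check that the subgroup of $\SL(2,\Z)$ preserving the pair (order-$2$ point, order-$4$ point) up to the allowed reparametrizations of $H$ is exactly $\Gamma(2\mid 4)$ as defined. This is the main obstacle, though a mild one — it reduces to tracking how a matrix $\begin{bmatrix} a & b \\ c & d \end{bmatrix} \in \SL(2,\Z)$ acts on the basis of torsion points and reading off the congruence conditions $a \equiv 1,\ c \equiv 0 \pmod 2$ and $b \equiv 0,\ d \equiv 1 \pmod 4$. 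The remaining cyclic and level-$2$ cases are entirely standard facts about modular curves and require no new argument.
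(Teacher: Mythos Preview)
Your proposal is correct and follows essentially the same approach as the paper: both reduce the Calabi--Yau $G$-action on $E$ to a faithful translation action of $H$ (since $\iota_E=-\id_E$ is forced), then identify this case by case with the standard level structures $Y_1(n)$, $Y(2)$, and $Y(2\mid 4)$. The paper's argument is in fact given in the paragraph preceding the lemma and is slightly terser---it simply asserts the $C_2\times C_4$ case with ``it is not difficult to see''---whereas you spell out a bit more of the stabilizer computation and make the irreducibility explicit, but there is no substantive difference.
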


\begin{Thm} \label{Moduli}
Let $\Aut_H(G)$ denote the subgroup of $\Aut(G)$ consisting of elements which preserve $H$.
The quotient space
$$
 (\mathcal{M}_{S}^{G} \times \mathcal{M}_{E}^{G})/\Aut_H(G)
$$
 is the coarse moduli space of Calabi--Yau threefolds of type K whose minimal splitting covering has the Galois group isomorphic to $G$. 
 The moduli space is in particular irreducible. 
\end{Thm}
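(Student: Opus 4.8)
The plan is to assemble the coarse moduli space of Calabi--Yau threefolds of type K from the already-established moduli descriptions of its two factors. By the work of Oguiso and Sakurai (Proposition \ref{PROP_OS}), a Calabi--Yau threefold of type K with Galois group $G$ is precisely a free quotient $(S\times E)/G$, where $S$ carries a Calabi--Yau $G$-action and $E$ carries the associated translation-plus-negation action. Since $\Aut(S\times E)\cong \Aut(S)\times \Aut(E)$ (Beauville), the datum of the covering $S\times E$ together with the $G$-action splits into the datum of $(S,\chi_S)$ and $(E,\chi_E)$. Thus the first step is to observe that the set of isomorphism classes of \emph{framed} target threefolds, i.e.\ pairs $(S\times E, \chi)$ with a fixed identification of the Galois group with $G$, is naturally in bijection with $\mathcal{M}_S^{G}\times \mathcal{M}_E^{G}$.

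Next I would account for the framing. Two framed pairs $(S\times E,\chi)$ and $(S\times E,\chi')$ produce isomorphic Calabi--Yau threefolds exactly when $\chi$ and $\chi'$ differ by an automorphism of $G$; but because the $H$-action on $S$ is symplectic while elements of $G\setminus H$ act anti-symplectically, only those automorphisms of $G$ preserving the subgroup $H$ give rise to another valid Calabi--Yau action. This is the same constraint already isolated in Theorem \ref{THM_uniqueness} for the factor $S$, and it is why the relevant symmetry group is $\Aut_H(G)$ rather than $\Aut(G)$. Forgetting the framing therefore amounts to quotienting $\mathcal{M}_S^{G}\times \mathcal{M}_E^{G}$ by the diagonal action of $\Aut_H(G)$, where $\sigma\in\Aut_H(G)$ sends $(S,\chi_S;E,\chi_E)$ to $(S,\chi_S\circ\sigma;E,\chi_E\circ\sigma)$. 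One must check that this quotient genuinely parametrizes the isomorphism classes of the quotient threefolds $X=(S\times E)/G$, and that the minimal-splitting-covering condition is automatically respected, so that no two distinct points of the quotient yield isomorphic $X$ and every such $X$ arises; this follows because the minimal splitting covering is intrinsic to $X$ (unique up to covering isomorphism) and its Galois group recovers $G$.

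For irreducibility, I would combine the irreducibility of the two factors with the fact that a quotient of an irreducible space is irreducible. By Theorem \ref{THM_uniqueness} the space $\mathcal{M}_S^{G}$ is irreducible (indeed $\mathcal{M}_S^{G}/\Aut_H(G)\cong \mathcal{D}^{G,\psi}/\Gamma_\psi$ from a single period domain $\mathcal{D}^{G,\psi}$), and by Lemma \ref{LEM_ellptic_moduli} the space $\mathcal{M}_E^{G}$ is an irreducible modular curve. The product of two irreducible varieties is irreducible, and the continuous image of an irreducible space under the quotient map remains irreducible, so $(\mathcal{M}_S^{G}\times\mathcal{M}_E^{G})/\Aut_H(G)$ is irreducible. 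I would also invoke the non-existence result (Theorem \ref{non-existence C3^2}) to confirm that $G\cong (C_3\times C_3)\rtimes C_2$ contributes nothing, consistent with the eight groups in Theorem \ref{Main Thm}.

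The main obstacle I anticipate is the bookkeeping in the second step: verifying cleanly that isomorphism of the quotient threefolds corresponds \emph{exactly} to the $\Aut_H(G)$-orbit equivalence on the product, with no extra identifications and no missing ones. Concretely, one must rule out the possibility that an isomorphism $X\cong X'$ lifts to an isomorphism of the minimal splitting coverings that fails to respect the product decomposition $S\times E$ or that induces an automorphism of $G$ not preserving $H$. The symplectic-versus-anti-symplectic dichotomy of Proposition \ref{PROP_OS} is exactly what forecloses the latter, and the Beauville splitting $\Aut(S\times E)\cong\Aut(S)\times\Aut(E)$ forecloses the former; assembling these into a precise bijection of moduli functors, rather than merely of point-sets, is the delicate part. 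Everything else reduces to the already-proved structural results for the $S$- and $E$-factors.
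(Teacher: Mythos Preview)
Your approach is essentially the same as the paper's: reduce to the product $\mathcal{M}_S^G\times\mathcal{M}_E^G$ via the Beauville splitting and the uniqueness of the minimal splitting covering, then quotient by $\Aut_H(G)$ to forget the framing, and finally invoke Theorem \ref{THM_uniqueness} and Lemma \ref{LEM_ellptic_moduli} for irreducibility.

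There is one slip in your irreducibility step. You assert that $\mathcal{M}_S^G$ is irreducible, but Theorem \ref{THM_uniqueness} does not say this: it says only that $\mathcal{M}_S^G/\Aut_H(G)\cong\mathcal{D}^{G,\psi}/\Gamma_\psi$ is irreducible, equivalently (via \eqref{eq_psi}) that $\Aut_H(G)$ acts transitively on the set of connected components of $\mathcal{M}_S^G$. A priori $\mathcal{M}_S^G$ may have several components indexed by $\Psi'_G$, permuted by $\Aut_H(G)$. So ``product of irreducibles is irreducible, then pass to the quotient'' is not available as written. The paper's fix is exactly what you need: since $\Aut_H(G)$ permutes the components of $\mathcal{M}_S^G$ transitively and $\mathcal{M}_E^G$ is irreducible, the diagonal $\Aut_H(G)$-quotient of the product is the quotient of a single irreducible piece $C\times\mathcal{M}_E^G$ by the stabilizer of $C$, hence irreducible. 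With that correction your argument matches the paper's.
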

\begin{proof}
Two Calabi--Yau threefolds $X$ and $Y$ of type K are isomorphic 
if and only if the corresponding minimal splitting coverings $S_X\times E_X$ and $S_Y\times E_Y$ are isomorphic as Galois coverings. 
Suppose that the Galois group is isomorphic to $G$. 
The condition is equivalent to the existence of an isomorphism $f\colon S_X\times E_X \to S_Y\times E_Y$ 
and an automorphism $\phi\in \Aut(G)$ such that the following diagram commutes:
\[\xymatrix{
G \ar[d] \ar[r]^{\phi} \ar@{}[dr]|\circlearrowleft & G\ar[d] \\ 
\Aut(S_X\times E_X) \ar[r]
 &  \Aut(S_Y\times E_Y), 
}\]
that is, $f (g\cdot x)=\phi(g)\cdot f(x)$ for any $g \in G$ and any $x\in S_X\times E_X$. 
Note that we have $\phi\in \Aut_H(G)$ because we fix a subgroup $H$ as in Proposition \ref{PROP_OS}.
Since a Calabi--Yau $G$-action on $S_X\times E_X$ induces that on each $S_X$ and $E_X$, 
$S_X\times E_X$ is represented by a point in $\mathcal{M}_{S}^{G} \times \mathcal{M}_{E}^{G}$. 
The quotient space $(\mathcal{M}_{S}^{G} \times \mathcal{M}_{E}^{G})/\Aut_H(G)$ is then the coarse moduli space of 
the isomorphism classes of Calabi--Yau threefolds of type K with Galois group isomorphic to $G$. 
The moduli space is irreducible because the action of $\Aut_H(G)$ on the set of connected components of $\mathcal{M}_{S}^{G}$ is transitive and $\mathcal{M}_{E}^{G}$ is irreducible by Theorem \ref{THM_uniqueness} and Lemma \ref{LEM_ellptic_moduli}. 
\end{proof}


Combining Proposition \ref{PROP_triplet}, Theorems \ref{THM_uniqueness}, \ref{non-existence C3^2} and \ref{Moduli}, 
we complete the proof of the main theorem (Theorem \ref{Main Thm}) of the present section.


\section{Key Lemma} \label{Key Lemma Section}

Let $S$ be a $K3$ surface with a Calabi--Yau $G$-action.
We fix a semi-direct product decomposition $G=H\rtimes \langle \iota \rangle$ as in Proposition \ref{PROP_OS}.
This section is devoted to the proof of the Key Lemma:

\begin{keylem}[Lemma \ref{LEM_keylemma}]
There exists an element $v\in \mathrm{NS}(S)^G$ such that $v^2=4$.
\end{keylem}


\subsection{Preparation}


\begin{Lem} \label{Rank}
Set $r:=\rk H^{2}(S,\Z)^{H}$.  
We then have 
$$ 
\rk H^{2}(S,\Z)^{G}=\frac{r}{2}-1, \ \ \ \rk H^{2}(S,\Z)^{H}_{\iota}=\frac{r}{2}+1. 
$$
\end{Lem}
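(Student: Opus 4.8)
The plan is to reduce both rank formulas to a single trace computation for the action of $\iota$ on the invariant lattice $L^H$, where I abbreviate $L:=H^2(S,\Z)$. First I would set up the eigenspace bookkeeping. Since the semidirect product structure gives $\iota h\iota=h^{-1}$, the involution $\iota$ preserves $L^H$: for $x\in L^H$ and $h\in H$ one has $h\cdot(\iota x)=\iota(\iota h\iota)x=\iota h^{-1}x=\iota x$. Thus $\iota$ acts as an involution on $L^H$, and over $\Q$ we obtain a decomposition $L^H\otimes\Q=(L^H)^+\oplus(L^H)^-$ into its $\pm1$-eigenspaces. Because $G=\langle H,\iota\rangle$, the $+1$-eigenspace is $L^G\otimes\Q$, so $\rk L^G=\dim(L^H)^+$; because $\iota$ is an isometry of the non-degenerate lattice $L$, the subspace $L_\iota\otimes\Q$ is exactly the $-1$-eigenspace of $\iota$ on $L\otimes\Q$, whence the $-1$-eigenspace of $\iota$ on $L^H$ is $(L^H\cap L_\iota)\otimes\Q=L^H_\iota\otimes\Q$ and $\rk L^H_\iota=\dim(L^H)^-$. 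Since these two eigenspaces sum to $L^H\otimes\Q$, this already gives $\rk L^G+\rk L^H_\iota=r$.

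It then remains to compute the difference $\rk L^H_\iota-\rk L^G=\dim(L^H)^--\dim(L^H)^+=-\Tr(\iota|_{L^H})$. The key step is to evaluate this trace by averaging over $H$. The projector onto $L^H\otimes\Q$ is $P=\frac{1}{|H|}\sum_{h\in H}h$, and $\iota$ commutes with $P$ because $\iota$ normalizes $H$ (the map $h\mapsto\iota h\iota=h^{-1}$ merely permutes $H$). Since $\iota$ preserves both the image and the kernel of $P$, taking traces over $L\otimes\Q$ gives
$$
\Tr(\iota|_{L^H\otimes\Q})=\Tr(\iota P|_{L\otimes\Q})=\frac{1}{|H|}\sum_{h\in H}\Tr(\iota h|_L).
$$
The point that makes this effective is that every element of $G\setminus H$, in particular each $\iota h$, acts on $S$ as an Enriques involution by the definition of a Calabi--Yau action. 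By Theorem \ref{thm_enriques_involution}, an Enriques involution $g$ satisfies $\rk L^g=\rk(U(2)\oplus E_8(-2))=10$ and $\rk L_g=\rk(U\oplus U(2)\oplus E_8(-2))=12$, so $\Tr(g|_L)=10-12=-2$. Hence every summand equals $-2$ and $\Tr(\iota|_{L^H})=-2$. Combining $\rk L^G-\rk L^H_\iota=-2$ with $\rk L^G+\rk L^H_\iota=r$ yields $\rk L^G=\frac{r}{2}-1$ and $\rk L^H_\iota=\frac{r}{2}+1$, as claimed.

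I expect the only genuinely substantive point to be the realization that the trace of $\iota$ on $L^H$ cannot be read off from $\iota$ alone (its trace on all of $L$ is also $-2$, but a priori its trace on the invariant sublattice need not match), and that it becomes computable precisely because \emph{all} the involutions $\iota h$, not just $\iota$ itself, are Enriques involutions sharing the same lattice invariants; the commuting-projector identity is what converts this uniform value $-2$ into the trace on $L^H$. Everything else is routine linear algebra over $\Q$.
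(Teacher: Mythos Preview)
Your proof is correct and takes a genuinely different route from the paper. The paper argues geometrically via the associated Calabi--Yau threefold $X=(S\times E)/G$: it identifies $h^{1,1}(X)=\rk H^2(S,\Z)^G+1$ and $h^{2,1}(X)=\rk H^2(S,\Z)^H_\iota-1$ and then uses $e(X)=e(S\times E)/|G|=0$ to force $h^{1,1}(X)=h^{2,1}(X)$, yielding the difference of $2$. Your argument stays entirely on the K3 side: you compute $\Tr(\iota|_{L^H})$ by averaging $\Tr(\iota h|_L)$ over $h\in H$ and use that every $\iota h$ is an Enriques involution with trace $-2$, by Theorem~\ref{thm_enriques_involution}. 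What you gain is self-containment and elementarity---no need to introduce $E$ or invoke Hodge theory on the quotient threefold---and your method makes transparent exactly which hypothesis is doing the work (the uniform Enriques condition on all of $G\setminus H$). What the paper's approach buys is a direct link between these lattice ranks and the Hodge numbers of $X$, which is itself informative for the later sections. The two proofs are in essence dual: the vanishing of $e(X)$ amounts to the same averaging of Lefschetz numbers over $G$, so the $-2$ you compute is the shadow of $h^{1,1}(X)-h^{2,1}(X)=0$ seen purely on $H^2(S,\Z)$.
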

\begin{proof}
Let $X$ denote a Calabi--Yau threefold $(S\times E)/G$ of type K. 
Since a holomorphic 2-form $\omega_{S}$ on $S$ is contained in $H^{2}(S,\C)_{\iota}^{H}$, 
we see that $H^{2}(S,\C)^{G}\subset H^{1,1}(S)$. 
We therefore have 
$$
H^{1,1}(X)
 \cong H^{1,1}(S\times E,\C)^{G}
 \cong ( H^{2}(S,\C)^{G} \otimes H^{0}(E,\C) )
  \oplus ( H^{0}(S,\C) \otimes H^{2}(E,\C) ).
$$
as $\C$-vector spaces and conclude that $h^{1,1}(X)=\rk H^{2}(S,\Z)^{G}+1$. 
On the other hand we have canonical isomorphisms of $\C$-vector spaces: 
\begin{equation*}
 H^{2,1}(X)\cong H^{2,1}(S\times E,\C)^{G}
 \cong (H^{1,1}(S)_{\iota}^{H}\otimes H^{1,0}(E))
  \oplus ( H^{2,0}(S) \otimes H^{0,1}(E) ).
\end{equation*}
By the decomposition
\begin{equation*}
 H^{2}(S,\C)^H_{\iota}\cong
 H^{2,0}(S) \oplus H^{1,1}(S)^H_{\iota} \oplus H^{0,2}(S),
\end{equation*}
 we conclude that $h^{2,1}(X)=\rk H^{2}(S,\Z)_{\iota}^{H}-1$. 
Since the Euler characteristic $e(X)=e(S \times E)/|G|=0$, we have $h^{1,1}(X)=h^{2,1}(X)$. 
Then the claim readily follows.  
\end{proof}

Recall that the action of $H$ on $S$ is symplectic.
Hence the quotient surface $S/H$ has at most ADE-singularities and the minimal resolution $\widetilde{S}$ of $S/H$ is again a $K3$ surface.
Let $\widetilde{\iota}$ denote the involution of $\widetilde{S}$ induced by $\iota$.

\begin{Lem} \label{LEM_iotatilde}
The involution of $S/H$ induced by $\iota$ has no fixed point.
In particular, $\widetilde{\iota}$ is an Enriques involution of $\widetilde{S}$.
\end{Lem}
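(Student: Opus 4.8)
The plan is to show two things: first that the induced involution $\bar\iota$ on $S/H$ is fixed-point free, and second that this forces $\widetilde\iota$ to be an Enriques involution on the K3 surface $\widetilde S$. The second part is almost immediate once the first is established: an Enriques involution is precisely a fixed-point-free involution of a K3 surface (it is automatically anti-symplectic), and since $\widetilde S \to S/H$ is the minimal resolution of the ADE-singularities, Lemma \ref{LEM_fixedpoint} transfers the fixed-point-free property from $\bar\iota$ on $S/H$ to $\widetilde\iota$ on $\widetilde S$. To know $\widetilde\iota$ is an honest involution (not the identity) and anti-symplectic, I would invoke Lemma \ref{LEM_detdet}-type reasoning or, more cleanly, observe that $\iota$ acts as $-1$ on $H^{2,0}(S)$ (it is an Enriques involution on $S$), so the induced action on $H^{2,0}(\widetilde S)$ is also $-1$, making $\widetilde\iota$ anti-symplectic and in particular nontrivial.

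The heart of the matter is therefore the first claim: $\bar\iota$ has no fixed point on $S/H$. Suppose for contradiction that $\bar x \in S/H$ is fixed by $\bar\iota$, and let $x \in S$ be a preimage. Then $\iota(x)$ and $x$ lie in the same $H$-orbit, so there exists $h \in H$ with $\iota(x) = h\cdot x$, i.e. $x$ is a fixed point of the element $h^{-1}\iota \in G \setminus H$. The plan is to derive a contradiction from the fact that the $G$-action is a Calabi--Yau action, under which every element of $G\setminus H$ acts as an Enriques involution and hence acts freely on $S$: by Definition \ref{def_cyg_k3}, each $g \in G\setminus H$ is an Enriques involution, so $S^g = \emptyset$. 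Since $h^{-1}\iota \in G\setminus H$, it can have no fixed point, contradicting $x \in S^{h^{-1}\iota}$.

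I expect the only subtlety, and the step I would treat most carefully, is the passage between a fixed point of $\bar\iota$ downstairs and a genuine fixed point of some element of $G \setminus H$ upstairs. The quotient map $\pi\colon S \to S/H$ is not étale at points where $H$ acts non-freely (the images of the fixed points of the symplectic action of $H$), so a priori one must check that the relation $\iota(x) \in H\cdot x$ really does produce an element $g \in G\setminus H$ fixing the point $x$ itself rather than merely permuting the fibre. This is where I would be explicit: the condition $\bar\iota(\bar x) = \bar x$ means exactly $\pi(\iota(x)) = \pi(x)$, which by definition of the quotient means $\iota(x) = h\cdot x$ for some $h \in H$, and then $g := h^{-1}\iota$ satisfies $g(x) = x$ directly as an equation in $S$. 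Because $g \in G\setminus H$ (as $h \in H$ and $\iota \notin H$), and such $g$ acts freely by the Calabi--Yau hypothesis, the contradiction is clean. This completes the argument.
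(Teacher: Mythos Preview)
Your argument is correct and follows essentially the same route as the paper: a fixed point of $\bar\iota$ on $S/H$ lifts to a fixed point of some $h\iota\in G\setminus H$ on $S$, contradicting the Calabi--Yau hypothesis. You are simply more explicit than the paper about the ``in particular'' clause, correctly invoking Lemma~\ref{LEM_fixedpoint} to transfer freeness through the minimal resolution and noting that a fixed-point-free involution of a K3 surface is automatically anti-symplectic.
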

\begin{proof}
If the involution of $S/H$ induced by $\iota$ has a fixed point, then the action of $h\iota$ on $S$ has a fixed point for some $h\in H$, which is a contradiction.
\end{proof}

Each irreducible curve $M_i$ which contracts under the resolution $\widetilde{S}\rightarrow S/H$ is a $(-2)$-curve.
We denote by $M$ the negative definite lattice generated by
 $\{M_i\}_i$ and set $K:=M^\bot_{H^2(\widetilde{S},\Z)}$.

\begin{Lem} \label{LEM_nonexistenceU2}
If $H=C_n ~ (3\leq n \leq 6)$,
 then there is no nef class $v\in K^{\widetilde{\iota}}$
 such that $v^2=4$.
\end{Lem}
\begin{proof}
We assume that a nef class $v\in K^{\widetilde{\iota}}$
 satisfies $v^2=4$ and derive a contradiction.
By Lemma \ref{LEM_iotatilde}, the (induced) action of $\iota$ on $S/H$ has no fixed point.
By the same argument as in the proof of Proposition \ref{LEM_doublequadric}, 
 the class $v$ induces a morphism
 $\widetilde{f}\colon \widetilde{S}\rightarrow \PP^3$ such that
 $\widetilde{f}(\widetilde{S})$ is a quadric surface
 and
 the degree of $\widetilde{f}$ is $2$.
Since we have $v\bot M$ by the assumption,
 the morphism $\widetilde{f}$ induces a morphism
 $f\colon S/H\rightarrow \PP^3$.
By the proof of Lemma \ref{PROP_genericity},
 we may assume that $f(S/H)\cong \PP^1\times \PP^1$
 by taking a generic $S$.
The action of $\iota$ on $S/H$ is of the form $\sigma\tau$,
 where $\sigma$ is induced by a symplectic involution of
 $\widetilde{S}$ and $\tau$ is the covering transformation of $f$.
Let $\overline{\tau}\in \Aut(S)$
 be a lift of $\tau$.
Note that $\overline{\tau}$ normalizes $H$.
Since $f$ induces a generically one-to-one morphism
 $S/\langle H,\overline{\tau} \rangle \rightarrow \PP^1\times \PP^1$,
 it follows that
 $S/\langle H,\overline{\tau} \rangle$ is smooth and
 that the action of $\tau$ fixes
 each singular point of $S/H$.
Hence
 the actions of a generator of $H$ and $\overline{\tau}$ are
 represented by the matrices
$
 \begin{bmatrix}
  \zeta_n & 0 \\ 
  0  & \zeta_n^{-1}
 \end{bmatrix}
$
 and
$
 \begin{bmatrix}
  0  & 1 \\ 
  1 & 0
 \end{bmatrix}
$
 respectively, in local coordinates around a point in $S^H$, where $\zeta_n:=\exp(2\pi i/n)$. 
Therefore we have $\overline{\tau}h\overline{\tau}=h^{-1} \text{ for any } h\in H$ ($\star$).

We checked that $\tau$ fixes each point in
 $\operatorname{Sing}(S/H)$.
Hence the action of $\sigma$ has $8$ fixed points
 $q_i\not \in \operatorname{Sing}(S/H)$ $(1\leq i \leq 8)$
 by Theorem \ref{Nik FixPoint}. 
Let $Q_i \subset S$ denote the inverse image of $q_i$,
 which consists of $|H|$ points. 
Take a point $p \in Q_i$. 
Since $H$ acts on $Q_i$ transitively, we can take a lift
 $\overline{\sigma}\in \Aut(S)$ of $\sigma$ such that
 $\overline{\sigma}\cdot p=p$.
The action of $\overline{\sigma}$ around $p$
 is locally identified with that of $\sigma$ around $q_i$. 
Therefore $\ord(\overline{\sigma})=2$. 
Since $\overline{\sigma}\,\overline{\tau}\in H\iota$, 
the condition ($\star$) implies that $\overline{\sigma}$ commutes with $H$.
Hence the action of $\overline{\sigma}$ on each $Q_i$
 is trivial or free.
If $n=3,5$ or $6$, this contradicts to the fact $|S^{\overline{\sigma}}|=8$.
Let us next consider the case $n=4$. 
Let $h \in H$ be a generator of $H$. 
By a similar argument, for each $Q_i$,
 we can check that the action of either
 $\overline{\sigma}$ or $\overline{\sigma}h^2$ on $Q_i$ is trivial. 
Therefore we have 
$\cup_{i=1}^8 Q_i=S^{\overline{\sigma}} \cup S^{\overline{\sigma}h^2}$.
On the other hand, Theorem \ref{Nik FixPoint} implies that 
 $|\cup_{i=1}^8 Q_i|=8 \cdot |H|=32$ and 
 $|S^{\overline{\sigma}}\cup S^{\overline{\sigma}h^2}| = 2 \cdot 8=16$.
This is a contradiction. 
\end{proof}


\subsection{Proof of the Key Lemma}
In the following, we write $L_{R}:=L\otimes_\Z R$ for a lattice $L$ and a $\Z$-module $R$. 
The bilinear form on $L$ naturally extends to that on $L_R$ which takes values in $R$. 
We denote by $\Z_p$ the $p$-adic integers.
Lattices over $\Z_p$,
 and their discriminant groups and forms are defined in a similar way
 to lattices (over $\Z$).
Note that a lattice over $\Z_2$ is not necessarily even.
Assume that $L$ is non-degenerate and even.
Then $A(L_{\Z_p})$ and $q(L_{\Z_p})$
 are the $p$-parts of $A(L)$ and $q(L)$ respectively
 (see \cite{Ni2} for details).
In particular, if $|\disc(L)|$ is a power of $p$,
 then we have
 $(A(L),q(L))\cong (A(L_{\Z_p}),q(L_{\Z_p}))$.

Some remarks are in order before the proof.
We fix an identification
 $H^2(S,\Z)=\Lambda:= U^{\oplus 3} \oplus E_{8} (-1)^{\oplus 2}$.
Since $H^{2,0}(S)$ is contained in $(\Lambda^H_\iota)_\C$,
 we have $\mathrm{NS}(S)^G=\Lambda^G$ by (\ref{EQ_ns_lat}).
By \cite{Ni2}, the $H$-invariant lattice $\Lambda^H$
 is non-degenerate,
 and the rank of $\Lambda^H$, which depends only on the group $H$,
 is given in the following table.
\begin{center}
 \begin{tabular}{|c|c|c|c|c|c|c|c|c|c|}  \hline
 $H$  & $C_{1}$ & $C_{2}$ & $C_{3}$ & $C_{4}$ & $C_{5}$ & $C_{6}$
  &  $C_2 \times C_2$ & $C_{2} \times C_{4}$
  & $C_{3} \times C_3$
  \\ \hline
 $\rank \Lambda^H$ & $22$ & $14$ & $10$  &  $8$  & $6$ &  $6$
  & $10$ & $6$  & $6$ \\ \hline
 \end{tabular}
\end{center}
Since $\Lambda^G(\HF)$ is contained in $\Lambda^\iota(\HF)$,
 which is isomorphic to $U\oplus E_8(-1)$
 by Theorem \ref{thm_enriques_involution},
 it follows that $\Lambda^G(\HF)$ is even.
Similarly, $K^{\widetilde{\iota}}(\HF)$
 is even by Lemma \ref{LEM_iotatilde}.
Since $G$ is finite,
 there exists a $G$-invariant K\"ahler class of $S$.
Therefore $\Lambda^G$ has signature $(1,\rank \Lambda^G-1)$.
Set
\begin{equation*}
 S':=S \setminus
  \{ p\in S \bigm| h\cdot p=p ~ (\exists h\in H, ~ h\neq 1) \},
\end{equation*}
 and let $\pi \colon S' \rightarrow \widetilde{S}$
 be the natural map. 
Since $S\setminus S'$ is a finite set, the pushforward $\pi_*$
 and Poincar\'e duality induce a natural map
\begin{equation*}
 f\colon \Lambda=H^2(S,\Z)\cong H_2(S,\Z)
 \rightarrow
 H_2(\widetilde{S},\Z)
  \cong H^2(\widetilde{S},\Z).
\end{equation*}
For any $x,y\in \Lambda^H$, we have $\LF{f(x),f(y)}=|H| \LF{x,y}$.
The map $f$ decomposes as
\begin{equation*}
 f\colon \Lambda \rightarrow (\Lambda^H)^\vee
  \rightarrow H^2(\widetilde{S},\Z),
\end{equation*}
 where the first map is the restriction of the first projection of
 the decomposition $\Lambda_\Q=(\Lambda^H)_\Q\oplus (\Lambda_H)_\Q$
 and the second map is the natural injection. 
Since $\Lambda^H/(\Lambda^G\oplus \Lambda^H_\iota)\cong (\Z/2\Z)^{\oplus l}$
 for some $l$ by Lemma \ref{Involution},
 we have $2(\Lambda^G)^\vee\subset (\Lambda^H)^\vee$.
Hence we find that 
\begin{equation*}
 f( 2(\Lambda^G)^\vee ) \subset
 f( (\Lambda^G)_\Q\cap (\Lambda^H)^\vee ) \subset
 K^{\widetilde{\iota}}.
\end{equation*}
Set $L:=\Lambda^G(\HF)$.
Then we have
 $2(\Lambda^G)^\vee \cong 2 L^\vee(\HF)\cong L^\vee(2)$.
Thus we have
\begin{equation*}
 L^\vee(|H|) \cong f( 2(\Lambda^G)^\vee )(\HF)
 \subset K^{\widetilde{\iota}}(\HF).
\end{equation*}
Therefore $L$ satisfies the following conditions.
\begin{enumerate}
\item \label{COND_evenness}
 $L$ and $L^\vee(|H|)$ are even.
\item \label{COND_nonrep}
 If $H=C_n ~ (3\leq n \leq 6)$,
 then $v^2\neq 2/n$ for any $v \in L^\vee$.
\end{enumerate}
Here (\ref{COND_nonrep}) is a conclusion of
 Lemmas \ref{LEM_reflection} and \ref{LEM_nonexistenceU2}.
These conditions are derived from geometry of K3 surfaces.
On the other hand,
 the argument below is essentially lattice theoretic.

\begin{proof}[Proof of Key Lemma]
If $\Lambda^G$ contains $U(2)$,
 we see that the assertion of the Key Lemma holds.

 \underline{Case $H=C_1$.}~
We have $\Lambda^G\cong U(2)\oplus E_8(-2)$ by Theorem \ref{thm_enriques_involution}.

\underline{Case $H=C_2$.}~
This case has been studied by Ito and Ohashi (No.~13 in their paper \cite{IO}). 
They showed that $\Lambda^G\cong U(2)\oplus D_4(-2)$.
Here we give a proof of this fact for the sake of completeness.
By Lemma \ref{Rank},
 the signature of $\Lambda^G$ is $(1,5)$.
For each prime $p$,
 the lattice $L_{\Z_p}$ over the local ring $\Z_p$
 admits an orthogonal decomposition
 $L_{\Z_p} \cong \bigoplus_{i\geq 0} L^{(p)}_i(p^i)$,
 where $L^{(p)}_i$'s are unimodular lattices
 (see \cite{Ni2} for details).
By (\ref{COND_evenness}), we have $L^{(p)}_i=0$
 for any $p\neq 2$ and any $i\geq 1$.
Thus $|\disc(L)|$ is a power of $2$.
Again, by (\ref{COND_evenness}),
 $L^{(2)}_0$ and $L^{(2)}_1$
 are even, and we have $L^{(2)}_i=0$ for any $i\geq 2$.
Let $V$ denote the lattice over $\Z_2$ defined by the matrix
 $\QMat{2}{1}{1}{2}$.
In general, a lattice over $\Z_2$ is expressed
 as an orthogonal sum of the lattices in the following table
 \cite[Propositions 1.8.1 and 1.11.2]{Ni2}.
\begin{equation*}
 \begin{array}{|c|c|c|c|} \hline
 N & \LF{2^k a} & U(2^k)=\QMat{0}{2^k}{2^k}{0}
  & V(2^k)=\QMat{2^{k+1}}{2^k}{2^k}{2^{k+1}} \\  \hline
 A(N) & \Z/2^k \Z & (\Z/2^k \Z)^{\oplus 2}
  & (\Z/2^k \Z)^{\oplus 2} \\ \hline
 q(N) & \LF{a/2^k} & u(2^k):=\QMat{0}{1/2^k}{1/2^k}{0}
  & v(2^k):=\QMat{1/2^{k-1}}{1/2^k}{1/2^k}{1/2^{k-1}} \\ \hline
 \sign q(N) & a+k (a^2-1)/2
     & 0 & 4k  \\  \hline
 \end{array}
\end{equation*}
Here $k\geq 0$ and $a=\pm 1, \pm 3$.
(Note that $\LF{\pm 1/2}\cong \LF{\mp 3/2}$.)
Since $L^{(2)}_0$ and $L^{(2)}_1$ are even,
 $L_{\Z_2}$ has an orthogonal decomposition
\begin{equation*}
 L_{\Z_2} \cong U^{\oplus \nu} \oplus V^{\oplus \mu}
  \oplus U(2)^{\oplus \nu'} \oplus V(2)^{\oplus \mu'}.
\end{equation*}
Then we have
\begin{equation*}
 A(L)\cong (\Z/2\Z)^{\oplus 2(\nu'+\mu')}, \quad
 \sign q(L) \equiv 4 \mu' \bmod 8.
\end{equation*}
%
Since we have $\Lambda^\iota(\HF) \cong U\oplus E_8(-1)$ and
 $\Lambda^G=(\Lambda^\iota)^H$,
 it follows that $\nu'+\mu' \leq 2$
 by Proposition \ref{Nik Disc form} and Lemma \ref{Involution}.
The fact that $\sign \Lambda^G \equiv \sign q(L) \bmod 8$
 implies that $\mu'=1$.
Hence either of the following cases occurs.
\begin{equation*}
 \begin{array}{|c|c|c|c|} \hline
   & A(L)
   & q(L)
   & L \\
 \hline
 (a) & (\Z/2\Z)^{\oplus 2} & v(2)
  & U\oplus D_4(-1) \\  \hline
 (b) & (\Z/2\Z)^{\oplus 4}
     & u(2)\oplus v(2) 
  & U(2) \oplus D_4(-1)  \\  \hline
 \end{array}
\end{equation*}
Here, in each case,
 $L$ is uniquely determined by $q(L)$
 by Theorem \ref{Nik Genus}.
In Case (b), we have $q(\Lambda^\iota_H(\HF))\cong u(2)\oplus v(2)$
 by Proposition \ref{Nik Disc form}.
Since $q(\Lambda^\iota_H(\HF))$ takes values in $\Z/2\Z$,
 it follows that $\Lambda^\iota_H(1/4)$
 is an even unimodular lattice of rank $4$,
 which contradicts to the fact that
 any even unimodular lattice has rank divisible by $8$.
Hence Case (a) occurs: $\Lambda^G=L(2)\cong U(2)\oplus D_4(-2)$.

\underline{Case $H=C_3$.}~
The signature of $\Lambda^G$ is $(1,3)$.
By a similar argument, the condition (\ref{COND_evenness})
 implies that
 $A(L)\cong (\Z/3\Z)^{\oplus l}$
 for some $0\leq l \leq 4$.
Due to the relations
 $\LF{2/3}^{\oplus 2}\cong \LF{-2/3}^{\oplus 2}$ and
 $\sign \langle \pm 2/3 \rangle \equiv \pm 2 \bmod 8$
 (see \cite{Ni2} for details),
 we conclude that
\begin{equation*}
 q(L) \cong \LF{2/3}^{\oplus l-1} \oplus \LF{\pm 2/3},
 \quad
 \sign q(L) \equiv 2(l-1)\pm 2 \bmod 8.
\end{equation*}
We can check that either of the following cases occurs.
\begin{equation*}
 \begin{array}{|c|c|c|c|} \hline
   & A(L)
   & q(L)
   & L \\
 \hline
 (a) & \Z/3\Z & \langle -2/3 \rangle
  & U\oplus A_2(-1) \\  \hline
 (b) & (\Z/3\Z)^{\oplus 3}
     & \langle 2/3 \rangle^{\oplus 3} 
  & U(3) \oplus A_2(-1)  \\  \hline
 \end{array}
\end{equation*}
Case (b) cannot occur by (\ref{COND_nonrep}).
Therefore Case (a) occurs:
 $\Lambda^G\cong U(2)\oplus A_2(-2)$.

\underline{Case $H=C_4$.}~
The signature of $\Lambda^G$ is $(1,2)$.
Similarly, we find that $|\disc(L)|$ is a power of $2$.
Moreover, $L^{(2)}_0$ and $L^{(2)}_2$ are even,
 and we have $L^{(2)}_i=0$ for any $i\geq 3$.
If $L^{(2)}_0=L^{(2)}_2=0$,
 then $L(\HF) \cong U\oplus \LF{-1}$
 by the uniqueness of indefinite odd unimodular lattices.
Otherwise, $q(L) \cong \LF{-1/2}$ or $u(4)\oplus \LF{-1/2}$
 because of the relation
 $\LF{a/2^{k}} \oplus v(2^{k+1}) \cong \LF{5a/2^{k}} \oplus u(2^{k+1})$
 for any $a$ with $a \equiv 1 \bmod 2$ (see \cite{Ni2} for more details). 
Therefore we conclude that
 $L \cong U(2^k)\oplus \LF{-2}$ for $k=0,1$ or $2$.
By (\ref{COND_nonrep}),
 it follows that $k\neq 1,2$.
Thus $\Lambda^G\cong U(2)\oplus \LF{-4}$.

\underline{Case $H=C_5$.}~
In a similar way, we can check that
 $L$ is an indefinite even lattice of rank $2$
 such that
 $A(L)\cong (\Z/5\Z)^{\oplus l}$
 for some $0 \le l \le2$. 
By \cite[Table 15.2]{CS}, we see that 
\begin{equation*}
 L \cong U,\ \QMat{2}{1}{1}{-2} \ \text{or} \ U(5).
\end{equation*}
The second and third cases cannot occur by (\ref{COND_nonrep}).
Hence we conclude that $ \Lambda^G \cong U(2)$. 

\underline{Case $H=C_6$.}~
The signature of $\Lambda^G$ is $(1,1)$.
We make use of the argument in Case $H=C_3$.
Let $h$ be a generator of $H$.
We define
 $N:=\Lambda^{\langle h^2,\iota \rangle}(\HF)\cong U\oplus A_2(-1)$.
Then $h$ acts on $N$ as an involution and we have $N^h = L$.
By Lemma \ref{Involution},
 we can check that $A(N^h)$ and $A(N_h)$ are of the form
 $(\Z/2\Z)^{\oplus l}\oplus (\Z/3\Z)^{\oplus m}$
 for some $0 \le l\le 2$ and $0 \le m \le 1$.
Therefore, according to \cite[Tables 15.1 and 15.2]{CS},
 we have
$$
 N^h\cong U, \ U(2) \text{ or }
 \pm\begin{bmatrix}2&0\\ 0&-6 \end{bmatrix}; \quad
 N_h\cong A_2(-1), \ 
 \begin{bmatrix}-2&0\\0&-2 \end{bmatrix}, \ 
 \begin{bmatrix}-2&0\\0&-6 \end{bmatrix} \text{ or } 
 A_2(-2).
$$
By (\ref{COND_evenness}), we have $N^h\cong U$ or $U(2)$.
Note that $N^h\oplus N_h$ is a sublattice of $N$ of finite index
 and that $N^h$ and $N_h$ are primitive sublattices of $N$.
Hence we have $N^h\cong U$ and $N_h\cong A_2(-1)$
 (see Proposition \ref{Nik Isotropic}).
Thus $\Lambda^G=N^h(2) \cong U(2)$.

\underline{Case $H=C_2 \times C_2$.}~
The signature of $\Lambda^G$ is $(1,3)$.
An almost identical argument to that in Case $H=C_4$ shows that
 $L \cong U(2^k) \oplus \LF{-2}^{\oplus 2}$ for $k=0,1$ or $2$.
In order to show $k\neq 2$,
 we make use of the argument in Case $H=C_2$.
Let $H'\cong C_2$ be a subgroup of $H$. 
Then we know that
 $N:=\Lambda^{\langle H',\iota \rangle}(\HF)\cong U\oplus D_4(-1)$.
Since $H/H'$ acts on $N$ as an involution and we have $N^{H/H'} = L$,
 it follows that $|\disc(L)|$ divides $2^4$
 by Lemma \ref{Involution}.
Therefore we have
 $\Lambda^G \cong U(2^{k+1}) \oplus \LF{-4}^{\oplus 2}$
 for $k=0$ or $1$, and the assertion of the Key Lemma holds.

\underline{Case $H=C_2 \times C_4$.}~
The signature of $\Lambda^G$ is $(1,1)$.
Let $H'\cong C_4$ be a subgroup of $H$.
We then have
 $N:=\Lambda^{\LF{H',\iota}}(\HF) \cong U\oplus \langle -2 \rangle$
 by the argument in Case $H=C_4$.
Similarly, we find that
 $|\disc(N^{H/H'})|$ and $|\disc(N_{H/H'})|$ divide $2^2$.
By \cite[Table 15.2]{CS}, we have
\begin{equation*}
 N^{H/H'} \cong U,\ U(2) \text{ or }
  \langle 2 \rangle \oplus \langle -2 \rangle; \quad
 N_{H/H'} \cong \LF{-2} \text{ or } \LF{-4}.
\end{equation*}
Therefore, by Proposition \ref{Nik Isotropic}, we have
 $N^{H/H'} \cong U$ or $\LF{2}\oplus \LF{-2}$,
 and $N_{H/H'} \cong \LF{-2}$.
Thus $\Lambda^G \cong U(2)$ or $\LF{4}\oplus \LF{-4}$.
Hence the assertion of the Key Lemma holds.

\underline{Case $H=C_3 \times C_3$.}~
The signature of $\Lambda^G$ is $(1,1)$.
We make use of the argument in Case $H=C_3$.
Let $H',H'' \cong C_3$ be subgroups of $H$
 such that $H=H' \times  H''$.
Then $N:=\Lambda^{\LF{H',\iota}}(\HF)\cong U\oplus A_2(-1)$.
By a similar argument to the proof of Lemma \ref{Involution},
 we have
 $\Lambda/(\Lambda^{H''} \oplus \Lambda_{H''}) \cong (\Z/3\Z)^{\oplus l}$
 for some $l$.
Hence $N/(L\oplus L^\bot_N)\cong (\Z/3\Z)^{\oplus m}$
 for some $0\leq m \leq 2$,
 and $|\disc(L)|$ and $|\disc(L^\bot_N)|$ divide $3^3$.
Therefore, by \cite[Tables 15.1 and 15.2]{CS}, we have 
$$
 L \cong  U, \ U(3) \ \text{or} \ 
 \pm \begin{bmatrix}2&3\\3&0\end{bmatrix}; \quad
 L^\bot_N \cong
 A_2(-1), \ A_2(-3) \ \text{or} \ \QMat{2}{1}{1}{14}.
$$
By Proposition \ref{Nik Isotropic},
 we can check that $L \cong U$ or $U(3)$,
 and that $L^\bot_N  \cong A_2(-1)$.
Assume that $L \cong U(3)$.
Note that $N':=L^\bot_N=\Lambda^{\LF{H',\iota}}_{H''}(\HF)$.
Hence, by interchanging $H'$ and $H''$, we have
\begin{equation*}
 N'':= \Lambda^{\LF{H'',\iota}}_{H'}(\HF)\cong A_2(-1), \quad
 L \oplus N' \oplus N''
 \subset \Lambda^\iota(1/2).
\end{equation*}
Since we have
 $N/(L\oplus N')\cong \Z/3\Z$,
 there exist elements $v\in L^\vee$ and $w\in (N')^\vee$ such that
 $v^2=2/3$, $(w')^2=-2/3$ and $v+w'\in N$.
Similarly, there exists an element $w'' \in (N'')^\vee$
 such that $(w'')^2=-2/3$ and
 $v+w''\in \Lambda^{\LF{H'',\iota}}(\HF)$.
This implies that $w'-w''\in \Lambda^\iota(1/2)$ and that
 $(w'-w'')^2=-4/3$.
Since $\Lambda^\iota(1/2)$ is integral,
 this is a contradiction.
%
Therefore we conclude that $\Lambda^G\cong U(2)$.
\end{proof}


\begin{Prop} \label{H(S)^G}
The $G$-invariant lattice $H^2(S,\Z)^G$
 is given by the following table%
\footnote{In the proof of Proposition \ref{PROP_triplet}, we already checked that Case $H=C_3 \times C_3$ does not occur.}.
\begin{center}
 \begin{tabular}{|c|c|c|} \hline 
 $G$  & $H^{2}(S,\Z)^{G}$ \\ \hline
 $C_{2}$ & $U(2)\oplus E_{8}(-2)$ \\ \hline
 $C_2 \times C_2$  &  $U(2)\oplus D_4(-2)$ \\ \hline
 $C_2 \times C_2 \times C_2$
  &  $U(2)\oplus \langle -4 \rangle^{\oplus 2} $ \\ \hline
 $D_{6}$ & $U(2)\oplus A_{2}(-2)$ \\ \hline
 $D_{8}$ & $U(2)\oplus \langle -4\rangle$ \\ \hline
 $D_{10}$ & $U(2)$ \\ \hline
 $D_{12}$ & $U(2)$ \\ \hline
 $C_{2}\times D_{8}$ & $U(2)$ \\  \hline
 \end{tabular}
 \end{center}
\end{Prop}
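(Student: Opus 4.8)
The plan is to read off most of the table directly from the proof of the Key Lemma (Section~\ref{Key Lemma Section}), where under the identification $H^2(S,\Z)=\Lambda$ we have $H^2(S,\Z)^G=\Lambda^G=\mathrm{NS}(S)^G$. For the six groups with $H$ cyclic, namely $G\cong C_2,\,C_2\times C_2,\,D_6,\,D_8,\,D_{10},\,D_{12}$ (i.e.\ $H\cong C_n$, $1\le n\le 6$), that argument determines $\Lambda^G$ uniquely, and the results are exactly the corresponding entries of the table. The proof of the Key Lemma leaves a genuine ambiguity only in the two remaining cases: for $G\cong C_2\times C_2\times C_2$ (i.e.\ $H\cong C_2\times C_2$) it gives $\Lambda^G\cong U(2^{k+1})\oplus\langle -4\rangle^{\oplus 2}$ with $k\in\{0,1\}$, and for $G\cong C_2\times D_8$ (i.e.\ $H\cong C_2\times C_4$) it gives $\Lambda^G\cong U(2)$ or $\langle 4\rangle\oplus\langle -4\rangle$. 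Thus I would reduce the whole proposition to excluding $k=1$ in the first case and $\langle 4\rangle\oplus\langle -4\rangle$ in the second.

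To do this I would exploit the explicit projective models. By Theorem~\ref{THM_uniqueness} the induced $G$-action $\psi$ on $\Lambda$ is unique up to conjugation, so $\Lambda^G$ does not depend on $S$ and may be computed on a generic member of $\mathcal{M}^G$. By Proposition~\ref{PROP_genericity} (together with Propositions~\ref{PROP_triplet} and~\ref{PROP_222section}) such a member is realized as a double covering $\pi\colon S\to Z\cong\PP^1\times\PP^1$ branched along a $(4,4)$-curve. Since $\mathrm{NS}(Z)\cong U$ and $\pi$ has degree $2$, we have $(\pi^*D)\cdot(\pi^*D')=2\,\langle D,D'\rangle$, so the pullback satisfies $\pi^*\mathrm{NS}(Z)\cong U(2)$. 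Moreover every element of $G$ preserves each of the two rulings of $Z$ (otherwise $Z^g$, hence $S^g$, would be positive-dimensional, as noted at the start of the proof of Proposition~\ref{PROP_triplet}), so $G$ fixes both ruling classes and therefore $\pi^*\mathrm{NS}(Z)\subseteq\mathrm{NS}(S)^G=\Lambda^G$.

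The conclusion then follows from a divisibility observation. Every Gram matrix entry of $U(4)\oplus\langle -4\rangle^{\oplus 2}$ and of $\langle 4\rangle\oplus\langle -4\rangle$ is divisible by $4$, so in either lattice the bilinear form takes values in $4\Z$; consequently neither can contain a sublattice isomorphic to $U(2)$, whose standard basis $e,f$ realizes $\langle e,f\rangle=2$. Since $\Lambda^G$ does contain the copy $\pi^*\mathrm{NS}(Z)\cong U(2)$, the case $k=1$ for $C_2\times C_2\times C_2$ and the case $\langle 4\rangle\oplus\langle -4\rangle$ for $C_2\times D_8$ are ruled out, leaving $\Lambda^G\cong U(2)\oplus\langle -4\rangle^{\oplus 2}$ and $\Lambda^G\cong U(2)$ respectively.

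I expect the substance of the argument to be this last divisibility step, which cleanly separates the level-$2$ from the level-$4$ possibilities; the rest is bookkeeping. The points that need care are verifying that each of the eight groups really is realized---so that a generic Horikawa model is available for the computation---which is furnished by Proposition~\ref{PROP_triplet} and Theorem~\ref{THM_uniqueness}, and confirming that $\pi^*\mathrm{NS}(Z)$ is $G$-invariant, which rests on the ruling-preservation recalled above. No further lattice classification beyond what the Key Lemma already provides should be needed.
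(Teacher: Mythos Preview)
Your proposal is correct and follows essentially the same route as the paper: both note that the proof of the Key Lemma already pins down $\Lambda^G$ in all but the cases $H\cong C_2\times C_2$ and $H\cong C_2\times C_4$, and both resolve the remaining ambiguity by observing that a generic member is a Horikawa model, so that $\pi^*\mathrm{NS}(\PP^1\times\PP^1)\cong U(2)$ sits inside $H^2(S,\Z)^G$, which excludes the level-$4$ alternatives. Your write-up is slightly more explicit (spelling out the divisibility-by-$4$ obstruction and the ruling-preservation), but the substance is identical to the paper's.
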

\begin{proof}
It is worth noting that $H^2(S,\Z)^G$ does not depend on the choice of $S$. 
By the proof of the Key Lemma,
 it suffices to show the assertion for
 $G=C_2 \times C_2$ and $C_2 \times D_8$.
Note that a generic K3 surface $S$ with a Calabi--Yau $G$-action is realized as a Horikawa model (Proposition \ref{PROP_triplet}) and thus $H^2(S,\Z)^G$ contains $U(2)$,
 which is the pullback of the N\'eron--Severi lattice of $\PP^1\times \PP^1$.
For $G=C_2 \times C_2$,
 we have $H^2(S,\Z)^G\cong U(2^{k+1}) \oplus \LF{-4}^{\oplus 2}$
 for $k=0$ or $1$ by the proof of the Key Lemma.
Hence we have $H^2(S,\Z)^G\cong U(2) \oplus \LF{-4}^{\oplus 2}$.
Similarly, for $G=C_2\times D_8$,
 we have $H^2(S,\Z)^G \cong U(2)$ or $\LF{4}\oplus \LF{-4}$.
We thus conclude that $H^2(S,\Z)^G \cong U(2)$.
%
\end{proof}


\section{Properties}
In this section, we will investigate some basic properties of Calabi--Yau threefolds of type K. 
The explicit description obtained in the preceding section plays a central role in our study. 
Throughout this section, $X$ is a Calabi--Yau threefold of type K and $\pi\colon S\times E \rightarrow X$ is the minimal splitting covering with Galois group $G$. 
We also fix a semi-direct decomposition $G=H \rtimes \langle \iota \rangle$. 

There exist $G$-equivariant Ricci-flat K\"ahler metrics $g_S$ and $g_E$ on $S$ and $E$ respectively \cite{Ya}. 
Then the product metric $g_S\times g_E$ on $S\times E$ descends to a Ricci-flat K\"ahler metric $g'$ and $g$ on the quotients $(S\times E)/H$ and $X$ respectively.  
Let $T:=S/\langle \iota \rangle$ be the Enriques surface with the metric $g_T$ induced by $g_S$. 
We denote by $\mathrm{Hol}_{h}(Y)$ the holonomy group of a manifold $Y$ with respect to a metric $h$ (we do not refer to a based point).  

\begin{Prop}
\begin{enumerate}
\item  $\mathrm{Hol}_{g_T}(T)\cong \{ A \in \UU(2) \ | \ \det A=\pm1\} \subset \UU(2)$.  
\item $\mathrm{Hol}_g(X) \cong \mathrm{S}(\UU(2)\times C_2)\subset \SU(3)$. 
\end{enumerate}
\end{Prop}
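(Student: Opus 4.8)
The plan is to compute both holonomy groups by passing to Riemannian covers and using the standard description of the holonomy of a quotient in terms of the holonomy of the cover together with the linearizations of the deck transformations. I first recall that for any Ricci-flat Kähler metric on a K3 surface the holonomy is exactly $\SU(2)$: the metric is not flat and $S$ is simply connected and irreducible, so no proper reduction occurs. Since we chose $g_S$ to be $G$-invariant, every $g_S\in G_S$ is a holomorphic isometry of $(S,g_S)$, hence $d(g_S)\in\UU(2)$ pointwise, with $\det d(g_S)=+1$ when $g\in H$ (symplectic, preserving the parallel holomorphic $2$-form $\omega_S$) and $\det d(g_S)=-1$ when $g\in G\setminus H$ (anti-symplectic). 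Likewise $-1_E$ acts on $TE$ by $-1$ and translations act trivially.

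For (1), I would apply this to the double cover $S\to T=S/\LF{\iota}$, whose deck group is $\pi_1(T)=\LF{\iota}\cong C_2$. Null-homotopic loops lift to loops in $S$, so $\mathrm{Hol}^0(T)=\mathrm{Hol}(S)=\SU(2)$, acting on $T_pT\cong\C^2$. A loop generating $\pi_1(T)$ lifts to a path in $S$ joining a basepoint $\tilde p$ to $\iota(\tilde p)$, and under the identification by $d\pi$ its holonomy equals $(d\iota)^{-1}\circ P$, where $P$ is parallel transport in $S$. Since $P$ preserves $\omega_S$ while $d\iota$ acts on $\omega_S$ by $-1$, this holonomy has determinant $-1$. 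Thus $\mathrm{Hol}(T)$ strictly contains $\SU(2)$; as $\SU(2)$ together with a single determinant $(-1)$ element generates $\{A\in\UU(2)\mid\det A=\pm1\}$, and $[\mathrm{Hol}(T):\mathrm{Hol}^0(T)]$ divides $|\pi_1(T)|=2$, I conclude $\mathrm{Hol}_{g_T}(T)=\{A\in\UU(2)\mid\det A=\pm1\}$. Equivalently, this records that $K_T$ is $2$-torsion but non-trivial.

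For (2), I would use the universal cover $\widetilde{X}=S\times\C$ of $X$, with the product of $g_S$ and the flat metric, whose deck group is $\pi_1(X)$ acting through $G$ on the $S$-factor and by $z\mapsto\pm z+c$ on $\C$ (sign $+$ for $H$, sign $-$ for $G\setminus H$, since $\iota_E=-1_E$). By Beauville's splitting $\Aut(S\times E)\cong\Aut(S)\times\Aut(E)$ the product decomposition is $G$-invariant, so it descends to a parallel orthogonal splitting $TX=V_2\oplus V_1$ with $V_2$ the rank-$2$ K3 part and $V_1$ the rank-$1$ elliptic part; hence $\mathrm{Hol}(X)\subset\UU(2)\times\UU(1)$. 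Computing as above, a loop in the class $\delta\in\pi_1(X)$ covering $g\in G$ has holonomy $(d(g_S)^{-1}\circ P^S,\ \pm1)$ on $V_2\oplus V_1$: on $V_1$ the flat transport is trivial, so the entry is the sign of $\delta$, and on $V_2$ the determinant is $\det d(g_S)^{-1}=\pm1$ with the same sign; the restricted holonomy contributes $\SU(2)$ on $V_2$ and the identity on $V_1$. The key point is that in every case the $V_1$-entry equals the determinant of the $V_2$-entry; therefore $\mathrm{Hol}(X)=\{(A,\det A)\mid A\in\UU(2),\ \det A=\pm1\}$, which is exactly $\mathrm{S}(\UU(2)\times C_2)$, sitting inside $\SU(3)$ because the total determinant $(\det A)^2=1$.

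The routine inputs (the holonomy of K3, the twisted-holonomy formula, and the signs of $\det d(g_S)$ and $d(g_E)$) are standard. The only genuinely delicate step is the last correlation in (2): one must verify that the determinant on the K3 factor and the scalar on the elliptic factor flip simultaneously — precisely because the elements $g_S$ with $g\in G\setminus H$ are anti-symplectic exactly when $g_E=-1_E$ — so that the holonomy is the diagonal $\mathrm{S}(\UU(2)\times C_2)$ rather than the larger product $\{A:\det A=\pm1\}\times C_2$. Part (1) is then the restriction of this same mechanism to the K3 factor alone.
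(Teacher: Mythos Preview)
Your proof is correct. For part (1) your explicit computation via the cover $S\to T$ and the twisted-holonomy formula amounts to the same argument as the paper's, which simply observes that $\mathrm{Hol}_{g_T}(T)$ cannot be $\SU(2)$, must be a $C_2$-extension of $\mathrm{Hol}_{g_S}(S)\cong\SU(2)$ inside $\UU(2)$, and that such an extension is unique.

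For part (2) your route genuinely differs from the paper's. You work directly with the universal cover $S\times\C$ and track the correlation between $\det d(g_S)$ and the sign of $d(g_E)$ across all deck transformations. The paper instead passes through the intermediate quotient $(S\times E)/H$: since $H$ acts symplectically on $S$ and by translations on $E$, this quotient has holonomy exactly $\SU(2)$, so the further $C_2$-quotient $X$ has holonomy an extension of $\SU(2)$ in $\SU(3)$ of index at most $2$; the paper then concludes the index is exactly $2$ by noting that $X$ contains an Enriques surface (as a fibre of $X\to E/G$), whose holonomy is not contained in $\SU(2)$ by part (1). Both arguments rest on the same mechanism---anti-symplectic on $S$ is paired with $-1_E$ on $E$---but your direct sign computation is more explicit and self-contained, while the paper's use of the Enriques fibre gives a quicker, more geometric witness that the holonomy strictly enlarges, and ties part (2) back to part (1).
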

\begin{proof}
Since the holonomy group $\mathrm{Hol}_{g_T}(T)$ cannot be $\SU(2)$, it must be a $C_2$-extension of $\mathrm{Hol}_{g_S}(S)\cong \SU(2)$ in $\UU(2)$. 
Such an extension is unique and this proves the first assertion. 
In order to prove the second assertion, we first consider the quotient $(S\times E)/H$, which admits a smooth isotrivial K3 fibration $(S\times E)/H \rightarrow E/H$. 
Since the action of $H$ on $S$ is symplectic, we see that $\mathrm{Hol}_{g'}((S\times E)/H) \cong \SU(2)$. 
Therefore the holonomy group $\mathrm{Hol}_g(X)$ is an extension of $\SU(2)$ in $\SU(3)$ of index at most 2. 
Since $X$ contains an Enriques surface, we conclude that $\mathrm{Hol}_g(X)\cong \mathrm{S}(\UU(2)\times C_2) \subset \SU(3)$.  
\end{proof}

\begin{Prop}
The following hold.
\begin{enumerate}
\item $\pi_1(X)=(\Z\times \Z) \rtimes G$, where the $G$-action on $\Z\times \Z$ is identified with that on $\pi_1(E)$.  
\item  $H_1(X,\Z) \cong (\Z/2\Z)^n$, where the exponent $n$ is given by the following table. 
\begin{center}
 \begin{tabular}{|c|c|c|c|c|c|c|c|c|}  \hline
 $G$  & $C_{2}$ & $C_{2} \times C_2$ & $C_{2} \times C_2 \times C_2$ & $D_{6}$ & $D_{8}$ &  $D_{10}$  &  $D_{12}$
 & $C_{2}\times D_{8}$ \\ \hline
 $n$ & $3$ & $4$ & $5$ & $3$ & $4$ & $3$ & $4$ & $5$\\ \hline
 \end{tabular}
 \end{center}
 \end{enumerate}
 \end{Prop}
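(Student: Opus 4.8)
The plan is to compute $\pi_1(X)$ by covering space theory and then to read off its abelianization. Since $S$ is a K3 surface it is simply connected, so the universal cover of $S\times E$ is $S\times\C$, with $\C$ the universal cover of $E$. As $\pi\colon S\times E\to X$ is finite \'etale, $S\times\C$ is also the universal cover of $X$ and $\pi_1(X)$ is its deck transformation group. The tower $S\times\C\to S\times E\to X$ then gives the short exact sequence
$$
 1\longrightarrow \pi_1(S\times E)\longrightarrow \pi_1(X)\longrightarrow G\longrightarrow 1,
$$
and $\pi_1(S\times E)=\pi_1(S)\times\pi_1(E)=\pi_1(E)\cong\Z\times\Z$ because $\pi_1(S)=1$. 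To pin down the conjugation action of $G$ on this normal subgroup I would use the Beauville decomposition $\Aut(S\times E)\cong\Aut(S)\times\Aut(E)$ recalled before Proposition~\ref{PROP_OS}: an element $g\in G$ acts on $\pi_1(E)$ through its second component $g_E$. By Proposition~\ref{PROP_OS}(3) every $h\in H$ acts on $E$ as a translation, hence trivially on $\pi_1(E)$, while $\iota_E=-\id_E$ acts as $-1$; so each element of $G\setminus H$ acts by $-1$. This is exactly the natural action on $\pi_1(E)$ asserted in the statement.

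It remains to split the sequence. Here I would construct a section $s\colon G\to\pi_1(X)$ by lifting a generating set of $G$ to deck transformations of $S\times\C$: the Enriques involution $\iota$ lifts to $\hat\iota=(\iota_S,\ z\mapsto -z)$, which manifestly has order $2$, and one lifts the symplectic generators of $H$ compatibly, checking that the chosen lifts satisfy the defining relations of $G=H\rtimes\langle\iota\rangle$. Verifying that a coherent system of lifts exists, equivalently that the extension class in $H^2(G;\pi_1(E))$ vanishes, is the crux of part~(1); granting it one gets $\pi_1(X)\cong(\Z\times\Z)\rtimes G$ with the action above.

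For part~(2) I would compute $H_1(X,\Z)=\pi_1(X)^{\mathrm{ab}}$ from the semidirect product structure. For any semidirect product $A\rtimes G$ with $A$ abelian one has $(A\rtimes G)^{\mathrm{ab}}\cong A_G\oplus G^{\mathrm{ab}}$, where $A_G=A/\langle a-g\cdot a\rangle$ is the coinvariant module. With $A=\Z\times\Z$ and the action above, only the elements of $G\setminus H$ act nontrivially (by $-1$), so the differences $a-g\cdot a$ generate exactly $2A$ and $A_G\cong A/2A\cong(\Z/2\Z)^2$. Hence $H_1(X,\Z)\cong(\Z/2\Z)^2\oplus G^{\mathrm{ab}}$, and since $G^{\mathrm{ab}}$ is an elementary abelian $2$-group in every case on the list, $H_1(X,\Z)\cong(\Z/2\Z)^n$ with $n=2+\dim_{\F_2}G^{\mathrm{ab}}$.

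The remaining task is the elementary computation of $G^{\mathrm{ab}}$ case by case. For $G=C_2^{\,k}$ one has $\dim_{\F_2}G^{\mathrm{ab}}=k$, giving $n=3,4,5$ for $k=1,2,3$. The dihedral group of order $2m$ has abelianization $C_2$ for $m$ odd and $C_2\times C_2$ for $m$ even, so $D_6,D_{10}$ (with $m=3,5$) give $\dim=1$ and hence $n=3$, while $D_8,D_{12}$ (with $m=4,6$) give $\dim=2$ and hence $n=4$; finally $(C_2\times D_8)^{\mathrm{ab}}\cong C_2\times(C_2\times C_2)$ has dimension $3$, giving $n=5$. This reproduces the table. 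I expect the only genuine difficulty to be the splitting step in part~(1): the action is forced by Proposition~\ref{PROP_OS}, and once the semidirect decomposition is established, part~(2) follows formally together with a routine abelianization count.
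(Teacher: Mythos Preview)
Your approach is exactly the route the paper takes: set up the covering exact sequence $1\to\pi_1(E)\to\pi_1(X)\to G\to 1$ and then abelianize. The paper's proof is two sentences and does not address the splitting at all; you rightly single it out as the crux.

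The trouble is that the splitting you hope to verify \emph{fails} whenever $H\neq 1$, so part~(1) as written cannot be proved. Concretely, for a nontrivial $h\in H$ the element $h_E$ is a nonzero translation $t_a$ of $E$, and every deck transformation of $S\times\C$ lying over $h$ has the form $(h_S,\ z\mapsto z+\tilde a)$ with $\tilde a\notin\Lambda$; its $\ord(h)$-th power is $(\id_S,\ z\mapsto z+\ord(h)\tilde a)$, a nontrivial element of $\pi_1(E)$. Thus no lift of $h$ has finite order and there is no section $G\to\pi_1(X)$. What actually happens is that the lifts of $H$ together with $\Lambda$ generate the finer rank-two lattice $\pi_1(E/H_E)$, on which the lift $\tilde\iota\colon(s,z)\mapsto(\iota_S(s),-z)$ acts by $-1$ and squares to the identity. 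Hence $\pi_1(X)\cong\Z^2\rtimes C_2$ for \emph{every} $G$ on the list.

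This also undermines your part~(2). The formula $(A\rtimes G)^{\mathrm{ab}}\cong A_G\oplus G^{\mathrm{ab}}$ is correct, but it applies to the split extension you have not established; applied to the genuine description $\pi_1(X)\cong\Z^2\rtimes C_2$ it gives $H_1(X,\Z)\cong(\Z/2\Z)^3$ in all eight cases. Your computation reproduces the paper's table precisely because you and the paper both implicitly assume a splitting that does not exist; the table appears to be incorrect for $H\neq 1$. So your instinct that the splitting is where the difficulty lies was exactly right --- it is not a difficulty to be overcome but an obstruction that changes the answer.
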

 \begin{proof}
 The first assertion readily follows from the exact  sequence $0 \rightarrow \pi_1(S\times E)\rightarrow \pi_1(X) \rightarrow G \rightarrow 0$, 
 and the Calabi--Yau $G$-action on $E$.
 The second follows from the fact that $H_1(X,\Z)\cong \pi_1(X)^{\mathrm{Ab}}$, 
 or the Cartan--Leray spectral sequence associated to the \'etale map $S\times E\rightarrow X$.  
\end{proof}

\begin{Prop}
There exists no isolated (smooth) rational curve on $X$. 
Here we say a curve is isolated if it is not a member of any non-trivial family. 
\end{Prop}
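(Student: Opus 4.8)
The plan is to lift a rational curve to the covering $S\times E$, observe that it must lie in a single K3 fibre, and then translate it along $E$ to exhibit a non-trivial family containing it. First I would let $C\subset X$ be a rational curve and take its normalization $\nu\colon\PP^1\to X$ with image $C$. Since the Calabi--Yau group $G$ acts freely on $S\times E$ (by the definition of a Calabi--Yau action), the map $\pi\colon S\times E\to X$ is an unramified covering of degree $|G|$; as $\PP^1$ is simply connected, $\nu$ lifts to a morphism $\tilde\nu\colon\PP^1\to S\times E$. Composing with the second projection $p_E\colon S\times E\to E$ gives a morphism $\PP^1\to E$ to an elliptic curve, which is necessarily constant. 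Hence $\tilde\nu(\PP^1)$ is contained in a single fibre $S\times\{e_0\}$, and writing $C_S\subset S$ for its image under the first projection, we obtain a rational curve $C_S$ with $C=\pi(C_S\times\{e_0\})$.

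Next I would use the product structure to move $C_S$ along $E$. For every $b\in E$ the translate $C_S\times\{b\}\subset S\times E$ is again a rational curve lying in the K3 fibre over $b$, and these assemble into the algebraic subvariety $C_S\times E\subset S\times E$, fibred over $E$ by the second projection. Pushing this family forward along the finite morphism $\pi$ produces an algebraic family $\{\pi(C_S\times\{b\})\}_{b\in E}$ of curves on $X$ that contains $C$ as the member over $b=e_0$.

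Finally I would check that this family is non-trivial by a dimension count. If the curves $\pi(C_S\times\{b\})$ were all equal to the single curve $C$, then $C_S\times\{b\}\subset\pi^{-1}(C)$ for every $b\in E$, and therefore $C_S\times E\subset\pi^{-1}(C)$. But $\pi$ is finite, so $\pi^{-1}(C)$ is one-dimensional, whereas $C_S\times E$ is a surface --- a contradiction. Thus $\pi(C_S\times E)$ is a surface swept out by the moving curves, the induced morphism from $E$ to the Chow variety of $X$ is non-constant, and $C$ is a member of a non-trivial family; in particular $C$ is not isolated.

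The argument is essentially complete, and the only step that needs genuine care is the construction of the lift $\tilde\nu$: it relies on $\pi$ being a bona fide covering space, which is exactly the freeness of the $G$-action guaranteed by the Calabi--Yau condition, together with the elementary fact that there is no non-constant morphism from $\PP^1$ to an elliptic curve. I do not expect any deeper obstacle.
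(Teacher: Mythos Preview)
Your argument is correct and follows essentially the same route as the paper's proof: lift the rational curve to $S\times E$ via the \'etale cover, use that any map $\PP^1\to E$ is constant to place it in a single K3 fibre, and then translate along $E$ to obtain a non-trivial family. The paper phrases this as a contradiction (an isolated $C$ would pull back to $|G|$ isolated rational curves on $S\times E$, but no such curves exist there) rather than constructing the family on $X$ directly, but the content is the same.
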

\begin{proof}
Suppose that there exists an isolated rational curve $C\subset X$. 
Since $\pi$ is \'etale, the pullback
 $\pi^{-1}(C)$ consists of $|G|$ isolated rational curves.  
On the other hand, there is no isolated rational curve on the product $S\times E$ as any morphism $\mathbb{P}^1\rightarrow E$ is constant and any smooth rational curve on any K3 surface has self-intersection number $-2$.
This leads us to a contradiction. 
\end{proof}

All rational curves show up in families (parametrized by the elliptic curve $E$).
It is shown that they do not contribute to Gromov--Witten invariants but the higher genus quantum corrections are present at least for the Enriques Calabi--Yau threefold \cite{MP}.

\begin{Prop}
$\mathrm{Aut}(X)=\mathrm{Bir}(X)$. 
\end{Prop}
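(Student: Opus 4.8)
The plan is to use the fact that a smooth Calabi--Yau threefold is a minimal model, so that every birational self-map is automatically an isomorphism in codimension one, and then to eliminate the only remaining source of non-regularity---flops---by invoking the preceding Proposition. The inclusion $\Aut(X)\subseteq \mathrm{Bir}(X)$ is trivial, so it suffices to prove the reverse inclusion. First I would fix $f\colon X\dashrightarrow X$ in $\mathrm{Bir}(X)$. Since $X$ is smooth it is $\Q$-factorial and terminal, and since $K_X$ is trivial it is nef; thus $X$ is a minimal threefold. Applying the negativity lemma on a common resolution $W$ of $f$ (with $p,q\colon W\to X$ and $q=f\circ p$), one compares $p^{*}K_X$ and $q^{*}K_X$, both of which are pullbacks of a nef, indeed trivial, divisor; this shows that neither $f$ nor $f^{-1}$ contracts a divisor. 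Hence $f$ is an isomorphism in codimension one, i.e.\ a pseudo-automorphism.

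Next I would invoke the structure theory of minimal threefolds: a birational map between minimal threefolds that is an isomorphism in codimension one is a composite of flops (Kawamata). Suppose $f$ is not already biregular. Then at least one flop is needed, and the first step of the factorization is a flop of $X$ itself, so $X$ must admit a flopping contraction $\phi\colon X\to \overline{X}$. Such a $\phi$ is a small contraction: its exceptional locus has codimension at least two, hence is a finite union of curves, and since $X$ is smooth each irreducible component $C$ of this locus is a smooth rational curve contracted to a point.

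The key point to justify carefully is that each such $C$ is \emph{isolated} in the sense of the previous Proposition. The contracted curves are precisely those whose numerical class lies in the extremal ray $R$ collapsed by $\phi$; any deformation $C'$ of $C$ in a flat family is numerically equivalent to $C$, so $[C']\in R$ and $C'$ is again contracted by $\phi$. Since the exceptional locus is a fixed finite union of curves, the family must be trivial, so $C$ lies in no non-trivial family. Thus $C$ is an isolated smooth rational curve on $X$, contradicting the preceding Proposition. Therefore no flop can occur and $f$ must be biregular, giving $\mathrm{Bir}(X)=\Aut(X)$. The main obstacle is really this last rigidity step together with citing the correct factorization theorem; everything else is the standard observation that Calabi--Yau threefolds are minimal models, and the rigidity of flopping curves is exactly what couples the MMP input to the geometric fact, proved earlier, that $X$ carries no isolated rational curve.
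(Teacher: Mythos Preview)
Your proposal is correct and follows essentially the same route as the paper: reduce to the nonexistence of a flopping contraction via Kawamata's factorization of birational maps between minimal models into flops, then argue that the exceptional curves of any flop would be isolated rational curves, contradicting the preceding proposition. The paper is simply more terse, citing Koll\'ar--Mori directly for the fact that the exceptional locus of a flopping contraction on a smooth threefold is a tree of isolated rational curves, whereas you spell out the rigidity argument via the extremal ray; your extra details on why $X$ is a minimal model and why $f$ is an isomorphism in codimension one are correct but omitted in the paper as standard.
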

\begin{proof}
Any birational morphism between minimal models is decomposed into finitely many flops up to automorphisms \cite{Ka}. 
Hence it is enough to prove that there exists no flop of $X$.  
In the case of threefolds, the exceptional locus of any flopping contraction must be a tree of isolated rational curves \cite[Theorems 1.3 and 3.7]{KM1}.
The previous proposition therefore shows that there exists no flop of $X$. 
\end{proof}

\begin{Prop} \label{Aut}
The following hold.
\begin{enumerate}
\item If $G \cong D_{10},\ D_{12} \text{ or } C_{2}\times D_{8}$, we have $|\Aut(X)|<\infty$ .  \label{Aut_fin}
\item If $G \cong C_2, \ C_2\times C_2, \ C_2 \times C_2 \times C_2, \ D_6 \text{ or } D_8$, and $X$ is generic in the moduli space, we have $|\Aut(X)|=\infty$. \label{Aut_inf}
\end{enumerate}
\end{Prop}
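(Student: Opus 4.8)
The plan is to reduce the computation of $\Aut(X)$ to that of a centralizer inside $\Aut(S)$, and then to treat the two cases according to the structure of the invariant lattice $\mathrm{NS}(S)^G=\Lambda^G$ determined in Proposition \ref{H(S)^G}. Since the minimal splitting covering $\pi\colon S\times E\to X$ is canonical, every automorphism of $X$ lifts to $S\times E$, and a lift normalizes the deck group $G$; conversely every element of the normalizer descends. Writing $N:=N_{\Aut(S\times E)}(G)$ and using the canonical isomorphism $\Aut(S\times E)\cong\Aut(S)\times\Aut(E)$ of \cite{Be2}, one obtains an exact sequence
\[
 1\longrightarrow G\longrightarrow N\longrightarrow \Aut(X)\longrightarrow 1,
\]
so $\Aut(X)$ is finite if and only if $N$ is. First I would analyze the projection $N\to\Aut(E)$. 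Its image lies in the normalizer of $G_E=\langle t_a,t_b\rangle\rtimes\langle -\mathrm{id}_E\rangle$; since conjugating $-\mathrm{id}_E$ by a translation $t_c$ yields $t_{2c}\circ(-\mathrm{id}_E)$, such a $t_c$ normalizes $G_E$ exactly when $2c\in\langle a,b\rangle$, a finite condition, and $\Aut(E)/E$ is finite, so this normalizer is finite. The kernel of $N\to\Aut(E)$ consists of the pairs $(\phi,\mathrm{id}_E)$ with $\phi$ centralizing $G_S$. Hence $\Aut(X)$ is finite if and only if the centralizer $C:=C_{\Aut(S)}(G_S)$ is finite, and the whole problem becomes a question about $C$.

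For part (\ref{Aut_fin}) I would use that here $\mathrm{NS}(S)^G=\Lambda^G\cong U(2)$ (Proposition \ref{H(S)^G}), a lattice of signature $(1,1)$ with finite isometry group $\mathrm{O}(U(2))\cong(\Z/2\Z)^{\oplus 2}$. Any $\phi\in C$ commutes with $G$, hence preserves $\mathrm{NS}(S)$ together with its sublattice $\mathrm{NS}(S)^G$ and the orthogonal complement $(\mathrm{NS}(S)^G)^{\bot}_{\mathrm{NS}(S)}$; the latter is negative definite, because the unique positive direction of $\mathrm{NS}(S)$ already lies in $\mathrm{NS}(S)^G$. Therefore the image of $C$ in $\mathrm{O}(\mathrm{NS}(S))$ is finite, while its action on the transcendental lattice $T(S)$ is through the finite group of Hodge isometries. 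Since $\Aut(S)\to\mathrm{O}(H^2(S,\Z))$ has finite kernel, $C$ is finite, proving $|\Aut(X)|<\infty$.

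For part (\ref{Aut_inf}) I would instead exhibit explicit automorphisms commuting with $G$. By Proposition \ref{PROP_222section}, for these groups (i.e. $H=C_n$ with $1\le n\le 4$ or $C_2\times C_2$) a generic $S$ is realized as a smooth $(2,2,2)$-hypersurface $S_0\subset\PP=\PP^1\times\PP^1\times\PP^1$. Each of the three projections $\PP\to(\PP^1)^2$ forgetting one factor exhibits $S$ as a double cover; let $\sigma_1,\sigma_2,\sigma_3$ be the corresponding covering involutions. Since the $G$-action on $\PP$ is a product action, it preserves each of these fibrations, and hence commutes with every $\sigma_i$, so that $\langle\sigma_1,\sigma_2,\sigma_3\rangle\subset C$. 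On the rank-$3$ sublattice of $\mathrm{NS}(S)$ spanned by the fiber classes $f_1,f_2,f_3$ (with $f_i^2=0$ and $f_i\cdot f_j=2$ for $i\ne j$) the involution $\sigma_i$ fixes $f_j$ for $j\ne i$ and sends $f_i\mapsto 2\sum_{j\ne i}f_j-f_i$; a direct computation then shows, for instance, that $\sigma_2\sigma_3$ acts unipotently with a nontrivial Jordan block and therefore has infinite order. Consequently $C$ is infinite and $|\Aut(X)|=\infty$.

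I expect the reduction step to be the conceptual heart of the argument: one must verify that automorphisms lift through the canonical covering, that lifts normalize $G$, and that the $\Aut(E)$-side of the normalizer is finite, which is exactly what pins the problem to the single invariant $C_{\Aut(S)}(G_S)$. After this, part (\ref{Aut_fin}) is routine definite-lattice theory once $\mathrm{NS}(S)^G\cong U(2)$ is known, while in part (\ref{Aut_inf}) the only genuine care lies in checking that the Wehler-type involutions $\sigma_i$ really commute with $G$ (which rests on the product structure of the action) and generate an infinite group; the genericity hypothesis enters precisely to guarantee the smooth $(2,2,2)$-model on which the $\sigma_i$ are biregular.
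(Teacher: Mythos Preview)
Your argument is correct and, for part (\ref{Aut_inf}), essentially coincides with the paper's: both use the smooth $(2,2,2)$-model from Proposition \ref{PROP_222section} and the three Wehler-type covering involutions commuting with $G$. The paper simply cites \cite{CO} for the fact that $\langle\sigma_1,\sigma_2,\sigma_3\rangle\cong C_2*C_2*C_2$, whereas you verify by hand that $\sigma_2\sigma_3$ is unipotent of infinite order on the span of $f_1,f_2,f_3$; either suffices.

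For part (\ref{Aut_fin}) the approaches diverge. The paper works directly on $X$: when $G\cong D_{10},D_{12}$ or $C_2\times D_8$ one has $\rho(X)=3$, and the cubic intersection form on $H^2(X,\Z)$ factors as a product of three linear forms (coming from the $U(2)$-summand of $H^2(S,\Z)^G$ and the elliptic-curve class), so the finiteness of $\Aut(X)$ follows at once from the result of Lazi\'c--Oguiso--Peternell \cite{LOP}. You instead carry out a careful reduction to the K3 side, showing that $|\Aut(X)|<\infty$ is equivalent to finiteness of the centralizer $C=C_{\Aut(S)}(G_S)$, and then deduce the latter from $\mathrm{NS}(S)^G\cong U(2)$: the orthogonal group of $U(2)$ is finite, the complement of $\mathrm{NS}(S)^G$ in $\mathrm{NS}(S)$ is negative definite, and the action of $C$ on $T(S)$ factors through the finite cyclic group of Hodge isometries, so $C$ injects into a finite subgroup of $\mathrm{O}(H^2(S,\Z))$. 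Your route is longer but self-contained: it avoids the black box of \cite{LOP} at the cost of the normalizer/centralizer analysis on the covering (which, incidentally, is exactly the reduction the paper relies on implicitly in part (\ref{Aut_inf})). Both arguments are valid; the paper's is terser, yours illuminates why the lattice $U(2)$ alone forces finiteness.
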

\begin{proof}
For $G\cong D_{10},\ D_{12} \text{ or } C_{2}\times D_{8}$, we have $\rho(X)=3$ and the intersection form on $H^2(X,\Z)$ splits into the product of three linear forms.  
Hence the assertion (1) follows from the result of \cite{LOP}. 

For $G \cong C_2, \ C_2\times C_2, \ C_2 \times C_2 \times C_2, \ D_6 \text{ or } D_8$, 
the K3 surface $S$ is realized as a $(2,2,2)$-hypersurface in $\PP^1\times \PP^1\times \PP^1$ (Proposition \ref{PROP_222section}). 
Hence, by \cite{CO}, $\Aut(S)$ contains $C_2*C_2*C_2$, which commutes with $G$. 
Therefore the assertion (2) follows. 

\end{proof}


\begin{Rem}
In Proposition \ref{Aut} (\ref{Aut_inf}), the genericity assumption is essential at least in the case $G=C_2$.
In fact, if $G=C_2$, it follows that $\Aut(X)$ is infinite if and only if the automorphism group of the Enriques surface $S/\LF{\iota}$ is infinite.
Although the automorphism group of a generic Enriques surface is infinite, there exist Enriques surfaces with finite automorphism group 
(see \cite{Ko} for the classification of such Enriques surfaces).  
\end{Rem}

It is known that the automorphism group of a Calabi--Yau threefold with $\rho=1,2$ is finite \cite{Og}. 
On the other hand, it is expected that there is a Calabi--Yau threefold with infinite automorphism group for each $\rho\ge4$ (see for example \cite{Bo,GM}).  
Proposition \ref{Aut} provides a supporting evidence for this folklore conjecture, giving examples for small and new $\rho$. 
It is an open problem whether or not a Calabi--Yau threefold with $\rho=3$ admits infinite automorphism group \cite{LOP}.

\section{Calabi--Yau Threefolds of Type A}
\label{typeA}
In this final section, we slightly change the topic and probe Calabi--Yau threefolds of type A.
Recall that a Calabi--Yau threefold is called of type A if it is an \'etale quotient of an abelian threefold. 
By refining Oguiso and Sakurai's fundamental work \cite{OS} on Calabi--Yau threefolds of type A,  
we will finally settle the full classification of Calabi--Yau threefolds with infinite fundamental group (Theorem \ref{Classification A and K}).

Let $A:=\C^d/\Lambda$ be a $d$-dimensional complex torus. 
There is a natural semi-direct decomposition $\Aut(A)=A\rtimes \Aut_{\mathrm{Lie}}(A)$, 
where the first factor is the translation group of $A$ and $\Aut_{\mathrm{Lie}}(A)$ consists of elements that fix the origin of $A$. 
We call the second factor of $g \in \Aut(A)$ the Lie part of $g$ and denote it by $g_0$. 
The fundamental result in the theory of Calabi--Yau threefolds of type A is the following. 

\begin{Thm}[{Oguiso--Sakurai \cite[Theorem 0.1]{OS}}] \label{type A}
Let $X$ be a Calabi--Yau threefold of type A. 
Then the following hold. 
\begin{enumerate}
\item $X=A/G$, where $A$ is an abelian threefold and $G$ is a finite group acting freely on $A$  
in such a way that either of the following is satisfied:
\begin{enumerate}
\item $G=\langle a \rangle \times \langle b \rangle \cong C_2 \times C_2$ and 
$$
a_0= \begin{bmatrix}
        1 & 0  &  0\\
        0  & -1 & 0\\
        0  &  0 &  -1\\
        \end{bmatrix}, \ \ \ 
b_0= \begin{bmatrix}
        -1 & 0  &  0\\
        0  & 1 & 0\\
        0  &  0 &  -1\\
        \end{bmatrix},
$$
\item $G=\langle a,b \ | \ a^4=b^2=abab=1 \rangle \cong D_8$ and 
$$
a_0= \begin{bmatrix}
        1 & 0  &  0\\
        0  &  0 & -1\\
        0  &  1 &  0\\
        \end{bmatrix}, \ \ \ 
b_0= \begin{bmatrix}
        -1 & 0  &  0\\
        0  & 1 & 0\\
        0  &  0 &  -1\\
        \end{bmatrix}, 
$$
\end{enumerate}
where $a_0$ and $b_0$ are the Lie part of $a$ and $b$ respectively and the matrix representation is the one given by an appropriate realization of $A$ as $\C^3/\Lambda$. 
\item In the first case, $\rho(X)=3$ and in the second case $\rho(X)=2$. 
 \item Both cases really occur.
\end{enumerate}
\end{Thm}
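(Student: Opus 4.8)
The plan is to reduce everything to the action of the \emph{linear (Lie) part} of $G$ on the universal cover and then to a finite list of point groups. Write $A=V/\Lambda$ with $V\cong\C^3$ and $\Lambda$ a lattice of rank $6$, and use the decomposition $\Aut(A)=A\rtimes\Aut_{\mathrm{Lie}}(A)$ to attach to each $g\in G$ its Lie part $g_0\in\GL(V)$; set $G_0:=\{g_0\mid g\in G\}$. Since any nonzero pure translation in $G$ can be absorbed by replacing $A$ with an isogenous abelian threefold (an intermediate covering), I would first reduce to the case where $g\mapsto g_0$ is injective, so $G\cong G_0$. The three defining constraints then become linear algebra: triviality of $K_X$ means the holomorphic $3$-form $\omega_A$ is $G$-invariant, i.e.\ $\det g_0=1$ for all $g$, so $G_0\subset\SL(V)$; the condition $H^1(X,\mathcal O_X)=0$ is equivalent, via $H^1(X,\mathcal O_X)\cong H^{0,1}(A)^{G_0}$ and Hodge symmetry, to $V^{G_0}=0$; and freeness forces, for every $g\ne1$, the map $g_0-\mathrm{id}$ to be non-invertible (otherwise $x\mapsto g_0x+t_g$ has a fixed point on $A$ for any translation part $t_g$), hence $g_0$ has eigenvalue $1$ and, lying in $\SL(V)$, eigenvalues $\{1,\zeta,\zeta^{-1}\}$ for a root of unity $\zeta$.

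Next I would classify the finite group $G_0\subset\SL(V)$ subject to: every nontrivial element is such a rotation, $V^{G_0}=0$, and $G_0$ preserves $\Lambda$. The lattice condition makes the real representation on $\Lambda\otimes\R\cong V\oplus\overline V$ rational; writing the eigenvalues of $g_0$ there as $1,1,\zeta,\zeta^{-1},\overline\zeta,\overline{\zeta}^{-1}$ and using $\overline\zeta=\zeta^{-1}$, the non-fixed part carries each of $\zeta,\zeta^{-1}$ with multiplicity two, so rationality of its characteristic polynomial forces $\zeta+\zeta^{-1}\in\Z$ and hence $\ord\zeta\in\{1,2,3,4,6\}$. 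I would then split into cases. In the abelian case the commuting finite-order elements are simultaneously diagonalizable, and the requirement that every product again fix a coordinate axis while no axis is fixed by all of $G_0$ pins the group down to the diagonal sign group $C_2\times C_2$ (for instance $C_3\times C_3$ is excluded because a suitable product has all eigenvalues equal to a primitive cube root of unity, losing the eigenvalue $1$). In the nonabelian case $G_0$ must contain a noncommuting pair of rotations, one necessarily of order $4$; analysing the reflections compatible with the lattice and with $V^{G_0}=0$ yields exactly the dihedral group $D_8$ in the stated normal form, the larger dihedral and exceptional groups being eliminated by the same rationality and no-common-fixed-vector constraints.

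Having fixed $G_0$, I would address \emph{freeness} itself, which is the genuinely delicate point. Triviality of the Lie-part spectrum at $1$ only gives the \emph{possibility} of a fixed-point-free affine action; actual freeness is the condition that the translation parts define a torsion-free extension $1\to\Lambda\to\pi_1(X)\to G_0\to1$, equivalently that the extension class in $H^2(G_0,\Lambda)$ restricts to a nonzero translation along the fixed locus of each cyclic subgroup. I would verify, for both $C_2\times C_2$ and $D_8$, that such a class exists and that, after a change of origin, the translation parts normalize to reproduce the matrices in the statement; this is also where one sees that no \emph{other} point group admits a free action even if it passed the linear tests. I expect this torsion-free cohomological step to be the main obstacle, since it is not captured by the representation $G_0\subset\SL(V)$ alone and requires a case-by-case analysis of $H^2(G_0,\Lambda)$ together with the fixed loci of each element.

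Finally, for the Picard numbers I would compute $\rho(X)=\dim\big(H^{1,1}(A)\big)^{G_0}$, identifying $H^{1,1}(A)$ with the space of Hermitian forms $V^{\ast}\otimes\overline{V^{\ast}}$ (dimension $9$) and taking $G_0$-invariants for a generic $G_0$-compatible complex structure: the diagonal action of $C_2\times C_2$ leaves invariant exactly the three diagonal forms, giving $\rho(X)=3$, while the $D_8$-action cuts this down to $2$, giving $\rho(X)=2$. For existence I would exhibit explicit models on products of elliptic curves --- $A=E_1\times E_2\times E_3$ in the $C_2\times C_2$ case and $A=E_1\times(E\times E)$ with $E_2\cong E_3=E$ in the $D_8$ case --- defining $G_0$ by the given Lie matrices and choosing $2$- and $4$-torsion translation parts so that each nontrivial element is visibly fixed-point free, thereby realizing both cases and completing the proof.
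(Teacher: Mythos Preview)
The paper does not prove this theorem; it is quoted verbatim from Oguiso--Sakurai \cite{OS} as background for the refinement in Proposition \ref{Refinement}. So there is no ``paper's own proof'' to compare against, and your outline has to be judged on its own.

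Your overall strategy is the standard and correct one: pass to the minimal splitting cover so that $G\hookrightarrow G_0\subset\GL(V)$, impose $G_0\subset\SL(V)$, $V^{G_0}=0$, an eigenvalue $1$ for every nontrivial $g_0$, and the crystallographic restriction $\ord(\zeta)\in\{1,2,3,4,6\}$; then classify $G_0$ and finally solve the Bieberbach--type problem of choosing translation parts making the action free. The abelian analysis and the Picard-number computation are fine.

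There is, however, a concrete error in your nonabelian step. You assert that a noncommuting pair of rotations forces an element of order $4$, and that ``the larger dihedral and exceptional groups [are] eliminated by the same rationality and no-common-fixed-vector constraints.'' This is false for $D_6\cong S_3$ (and hence for $D_{12}$, which contains it). Taking $V\cong \mathrm{sign}\oplus\mathrm{std}$ as an $S_3$-module, one has $G_0\subset\SL(V)$, $V^{G_0}=0$, and \emph{every} nontrivial element has eigenvalue $1$: the $3$-cycles fix the sign line, and each transposition has spectrum $\{-1,1,-1\}$. Moreover $V\oplus\overline V\cong \mathrm{sign}^{\oplus 2}\oplus\mathrm{std}^{\oplus 2}$ is defined over $\Q$, so a $G_0$-stable lattice exists. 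Thus $D_6$ passes all of your linear tests; its exclusion is \emph{not} a matter of rationality or of $V^{G_0}=0$ but lies entirely in the freeness step you postponed. You partly sense this (``this is also where one sees that no other point group admits a free action even if it passed the linear tests''), but as written the two paragraphs contradict each other, and the actual work --- showing that for $G_0\cong D_6$ no choice of translation parts makes all of $b,\,ab,\,a^2b$ simultaneously fixed-point free while respecting $bab^{-1}=a^{-1}$ --- is exactly the case analysis Oguiso--Sakurai carry out and that your sketch omits.
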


Theorem \ref{type A} provides a classification of the Lie part of the Galois groups of the minimal splitting coverings, 
where the Galois groups do not contain any translation element. 
We will see that, in contrast to Calabi--Yau threefolds of type K, Calabi--Yau threefolds of type A are not classified by the Galois groups of the minimal splitting coverings. 
That is, a choice of Galois group does not determine the deformation family of a Calabi--Yau threefold of type A. 
We improve Theorem \ref{type A} by allowing non-minimal splitting coverings as follows.

\begin{Prop} \label{Refinement}
Let $X$ be a Calabi--Yau threefold of type A. 
Then $X$ is isomorphic to the \'etale quotient $A/G$ of an abelian threefold $A$ by an action of a finite group $G$, where $A$ and $G$ are given by the following.
\begin{enumerate}
\item
$A=A'/T$, where $A'$ is the direct product of three elliptic curves $E_1,E_2$ and $E_3$: 
$$
 A':=E_1\times E_2\times E_3, \ \  E_i:=\C/(\Z\oplus \Z \tau_i), \ \ \tau_i \in \mathbb{H}
$$
and $T$ is one of the subgroups of $A'$ in the following table, which consists of $2$-torsion points of $A'$.
\begin{equation*}
\begin{array}{|c|c|c|c|}\hline
 T_1 & T_2 & T_3 & T_4 \\ \hline
 0
 & \langle (0,1/2,1/2)_{A'} \rangle
 & \langle (1/2,1/2,0)_{A'},(1/2,0,1/2)_{A'} \rangle
 & \langle (1/2,1/2,1/2)_{A'} \rangle \\ \hline
\end{array}
\end{equation*}
Here $(z_1,z_2,z_3)_{A'}$ denotes the image of $(z_1,z_2,z_3)\in\C^3$ in $A'$.
\item
$G\cong C_2 \times C_2$ or $D_8$.
\begin{enumerate}
\item 
If $G=\langle a \rangle \times \langle b \rangle \cong C_2 \times C_2$, then $G$ is generated by
\begin{align*}
 &a\colon (z_1,z_2,z_3)_A\mapsto (z_1+\tau_1/2,-z_2,-z_3)_A, \\
 &b\colon (z_1,z_2,z_3)_A\mapsto (-z_1,z_2+\tau_2/2,-z_3+\tau_3/2)_A.
\end{align*}
\item
If $G=\langle a,b \ | \ a^4=b^2=abab=1 \rangle \cong D_8$, then $\tau_2=\tau_3=:\tau$, $T=T_2$ or $T_3$, and
 $G$ is generated by
\begin{align*}
 a&\colon (z_1,z_2,z_3)_A\mapsto (z_1+\tau_1/4,-z_3,z_2)_A, \\
 b&\colon (z_1,z_2,z_3)_A\mapsto (-z_1,z_2+\tau/2,-z_3+(1+\tau)/2)_A.
\end{align*}
\end{enumerate}
\end{enumerate}
Moreover, each case really occurs. 
\end{Prop}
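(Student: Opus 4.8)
The plan is to start from the Oguiso--Sakurai structure theorem (Theorem \ref{type A}), which already provides $X=A/G$ with $G\cong C_2\times C_2$ or $D_8$ acting freely and with the Lie parts $a_0,b_0$ prescribed, and to upgrade the abelian threefold $A$ to an isogenous \emph{totally split} one $A'=E_1\times E_2\times E_3$. Writing $A=\C^3/\Lambda$, the representation $g\mapsto g_0$ of $G$ on $\C^3$ singles out three coordinate lines $V_1,V_2,V_3$, namely the common eigenlines of the involutions in $G_0$ (in the $D_8$ case $V_2\oplus V_3$ is the $2$-plane that $a_0$ rotates by a quarter turn). The first step is to manufacture, out of $\Lambda$, the split lattice $\Lambda':=\Lambda_1\oplus\Lambda_2\oplus\Lambda_3$ with $\Lambda_i:=\Lambda\cap V_i$, to set $E_i:=V_i/\Lambda_i$, and to take $A':=\C^3/\Lambda'$ together with $T:=\Lambda/\Lambda'$, so that $A=A'/T$.

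Second, I would control $T$. For each involution $g_0\in G_0$, which acts as $-1$ on a pair of axes, the argument of Lemma \ref{Involution} applied to $\Lambda$ (using $x+g_0 x\in\Lambda$) shows $2\Lambda\subset\Lambda'$, so $T\subset A'[2]$ is a $G_0$-invariant group of $2$-torsion points. In the $D_8$ case the quarter rotation $a_0$ carries $\Lambda_2$ isomorphically onto $\Lambda_3$, which forces $E_2\cong E_3$, i.e. $\tau_2=\tau_3$. After normalizing the origin and rescaling the $E_i$, I would then enumerate the $G_0$-stable subgroups of $A'[2]$ and reduce them, up to the evident equivalences, to the list $T_1,\dots,T_4$ (of which only $T_2,T_3$ survive the extra $D_8$-symmetry).

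Third, the translation parts of $a$ and $b$, which Theorem \ref{type A} leaves undetermined, must be pinned down by freeness. The point is that along every axis on which a Lie part acts as $-1$ (or as the quarter rotation, on $E_2\times E_3$) the corresponding involution already has fixed points on that factor; hence the freeness of a group element can only be supplied along its fixed direction, where the element acts by a translation. Demanding that the element have no fixed point on $A=A'/T$ --- a condition read modulo $\Lambda$, not merely modulo $\Lambda'$, which is exactly where $T$ re-enters --- forces the translation along that direction to be a prescribed half-period, and a short case check yields precisely the explicit formulas for $a,b$ in the statement. \textbf{This bookkeeping, disentangling the interaction between the half-period translations, the torsion subgroup $T$, and the freeness condition taken modulo $\Lambda$, is the main obstacle}; everything else is eigenspace linear algebra.

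Finally, for the realizability clause I would reverse the analysis: for each admissible pair $(G,T)$ in the table I would check directly that the displayed generators define a fixed-point-free $G$-action on $A=A'/T$ (verifying that $a$, $b$, and their products have no fixed point on each factor after accounting for $T$), so that $A/G$ is a genuine Calabi--Yau threefold of type A. The four choices of $T$ with $G\cong C_2\times C_2$ together with the two admissible choices for $G\cong D_8$ then account for the six deformation classes recorded in Theorem \ref{Classification A and K}.
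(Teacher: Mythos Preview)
Your overall strategy matches the paper's proof closely: pass to the split sublattice $\Lambda'=\Lambda_1\oplus\Lambda_2\oplus\Lambda_3$, identify $T=\Lambda/\Lambda'$ as $2$-elementary, pin down the translation parts by freeness, and then verify realizability. But there is a genuine gap in your second step, where you claim that enumerating the $G_0$-stable subgroups of $A'[2]$ up to evident equivalences yields $T_1,\dots,T_4$.

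For $G\cong C_2\times C_2$ the Lie part $G_0$ consists of diagonal sign changes and therefore acts \emph{trivially} on $A'[2]$: every subgroup of $A'[2]\cong(\Z/2\Z)^6$ is $G_0$-stable, and there are far more than four of them even modulo $\SL(2,\Z)^3\rtimes S_3$. The constraint you are missing is that freeness feeds back into the shape of $T$ itself, not only into the translation parts. Concretely, if some $v=(v_1,v_2,v_3)\in\Lambda$ satisfies $v_1\equiv u_1\pmod{\Lambda_1}$, then $(z_1,v_2/2,v_3/2)$ is a fixed point of $a$; hence the image of $T$ under projection to $E_1[2]$ must avoid the class of $u_1$ and therefore has order at most $2$. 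Repeating this for $b$ and $ab$ confines $T$ to a copy of $(\Z/2\Z)^3$ inside $A'[2]$ (after normalizing each $u_i$ to $\tau_i/2$), and then the primitivity of $\Lambda_i$ in $\Lambda$, which is built into your definition $\Lambda_i=\Lambda\cap V_i$, rules out the coordinate-axis elements. Only at this point does the enumeration collapse to $T_1,\dots,T_4$; the paper carries out exactly this argument.

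Likewise, in the $D_8$ case your claim that ``only $T_2,T_3$ survive the extra $D_8$-symmetry'' is not correct as stated: all four $T_i$ are invariant under the swap of the second and third coordinates induced by $a_0$ on $2$-torsion. What actually eliminates $T_1$ and $T_4$ is that the relation $(ab)^2=1$ together with $A^{ab}=\emptyset$ forces $(0,u_2-u_3,u_2-u_3)\in\Lambda$ with $u_2-u_3\notin\Lambda_2$, so $(0,1/2,1/2)$ is a nonzero element that must lie in $T$. Again the decisive input is freeness combined with the group relations, not $G_0$-invariance of $T$.
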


\begin{proof}
By Theorem \ref{type A}, $X$ is of the form $A/G$ with $G$ isomorphic to either $C_2 \times C_2$ or $D_8$. 
Let $\C^3/\Lambda$ be a realization of $A$ as a complex torus.
In the case $G\cong C_2 \times C_2$, we may assume that $G$ is generated by
\begin{align*}
 a&\colon (z_1,z_2,z_3)_A\mapsto (z_1+u_1,-z_2,-z_3)_A, \\
 b&\colon (z_1,z_2,z_3)_A\mapsto (-z_1,z_2+u_2,-z_3+u_3)_A,
\end{align*}
 after changing the origin of $A$ if necessary.
Hence $\Lambda$ is stable under the following actions:
\begin{align*}
 a_0&\colon (z_1,z_2,z_3)\mapsto (z_1,-z_2,-z_3), \\
 b_0&\colon (z_1,z_2,z_3)\mapsto (-z_1,z_2,-z_3).
\end{align*}
From this, we see that 
there exist lattices $\Lambda_i \subset \C$ for $i=1,2,3$ such that
\begin{equation*}
 2\Lambda \subset \Lambda_1\times \Lambda_2 \times \Lambda_3
  \subset \Lambda.
\end{equation*}
Let $e_1,e_2,e_3$ be the standard basis of $\C^3$.
Set 
$$
 \Lambda':=\Lambda'_1\times \Lambda'_2\times \Lambda'_3\subset \Lambda,
\ \ \ 
 \Lambda'_i:=\{ z\in\C \bigm| z e_i\in \Lambda \}.
$$
Then $\Lambda/\Lambda'$ is a $2$-elementary group, that is,
 $\Lambda/\Lambda' \cong (\Z/2\Z)^{\oplus n}$ for some $n$.
Since $a^2=b^2=(ab)^2=\id_A$ and the $G$-action is free, 
we have  $u_i\not\in \Lambda'_i$ but $2 u_i\in\Lambda'_i$.
Let $v=(v_1,v_2,v_3)\in\Lambda$.
Suppose $v_1\equiv u_1 \bmod \Lambda'_1$, then $(z_1,v_2/2,v_3/2) \in A^{a}$.
Hence we conclude that $v_1\not\equiv u_1 \bmod \Lambda'_1$.
Similarly, $v_i\not\equiv u_i \bmod \Lambda'_i$ for $i=2,3$.
Therefore, we may assume that there exist $\tau_i \in \mathbb{H}$ for $i=1,2,3$ such that 
\begin{enumerate}
\item
$\Lambda'_i=\Z\oplus \Z\tau_i$,
\item
$u_i \equiv \tau_i/2 \bmod \Lambda'_i$,
\item
$v_i \equiv 0 \text{ or } 1/2 \bmod \Lambda'_i$ for all $v\in\Lambda$,
\end{enumerate}
 after changing each coordinate $z_i$ if necessary.
Now that we can check the assertion of the theorem in this case by a direct computation.
In particular, $T=\Lambda/\Lambda'$ coincides with one in the table up to permutation of the coordinates.

Similarly, in the case $G\cong D_8$, we may assume that $G$ is generated by
\begin{align*}
 a&\colon (z_1,z_2,z_3)_A\mapsto (z_1+u_1,-z_3,z_2)_A, \\
 b&\colon (z_1,z_2,z_3)_A\mapsto (-z_1,z_2+u_2,-z_3+u_3)_A.
\end{align*}
We use the same notation as above.
It follows that $\Lambda'_2=\Lambda'_3$, $4u_1 \in \Lambda'_1$, $2u_1\not\in \Lambda'_1$,
 $2u_i \in \Lambda'_i,u_i \not \in\Lambda'_i$ for $i=2,3$.
We have
\begin{align*}
 ab &\colon  (z_1,z_2,z_3)_A \mapsto (-z_1+u_1,z_3-u_3,z_2+u_2)_A \\
 (ab)^2 &\colon  (z_1,z_2,z_3)_A \mapsto
  (z_1,z_2+u_2-u_3,z_3+u_2-u_3)_A.
\end{align*}
By $(ab)^2=1$ and $A^{ab}=\emptyset$, it follows that $(0,u_2-u_3,u_2-u_3)\in\Lambda$ and $u_2-u_3\not\in\Lambda'_2$.
Since the action of $SL(2,\Z)$ on the set of level $2$ structures on an elliptic curve is transitive,
we may assume that $\tau_2=\tau_3=:\tau$, $u_2=\tau_2/2$, $u_3=(1+\tau_3)/2$.
By a similar argument to the case $G\cong C_2 \times C_2$, we can check that $v_i \equiv 0$ or $1/2 \bmod \Lambda'_i$ for any $v=(v_1,v_2,v_3)\in\Lambda$.
In particular, we have $(0,1/2,1/2)\in\Lambda$.
Since $T=T_4$ implies that $(1/2,0,0)\in \Lambda'_1$, which is a contradiction, it follows that  $T$ is either $T_2$ or $T_3$.
Moreover, we can check that the action of $G$ has no fixed point for $T=T_2,T_3$.
\end{proof}

\begin{Rem}
The above four cases for $G\cong C_2 \times C_2$ have previously been studied by Donagi and Wendland \cite{DW}. 
\end{Rem}
As was mentioned earlier, in contrast to Calabi--Yau threefolds of type K, Calabi--Yau threefolds of type A are not classified by the Galois groups of the minimal splitting coverings. 
They are classified by the minimal totally splitting coverings, where abelian threefolds $A$ which cover $X$ split into the product of three elliptic curves. 

Together with Theorem \ref{Main Thm}, Proposition \ref{Refinement} finally completes the full classification of Calabi--Yau threefolds with infinite fundamental group:

\begin{Thm} \label{Classification A and K}
There exist precisely fourteen Calabi--Yau threefolds with infinite fundamental group, up to deformation equivalence. 
To be more precise, six of them are of type A and eight of them are of type K. 
\end{Thm}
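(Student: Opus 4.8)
The plan is to combine the two partial classifications already established and then verify that together they produce exactly fourteen pairwise inequivalent deformation classes. First I would invoke the Bogomolov decomposition theorem \cite{Be1} to recall that every Calabi--Yau threefold $X$ with infinite fundamental group is of type A or of type K, and then observe that the two types cannot be exchanged under deformation. Since $\pi_1(X)$ is a diffeomorphism invariant, so is the rank of the maximal normal free abelian subgroup of finite index (the Hirsch length of the virtually abelian group $\pi_1(X)$); this rank is $6$ in the type A case, coming from $\pi_1(A)\cong\Z^6$, and $2$ in the type K case, coming from $\pi_1(S\times E)\cong\pi_1(E)\cong\Z^2$. Hence no type A threefold is deformation equivalent to a type K one, and it suffices to count the two types separately.

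For type K there is nothing new to do: Theorem \ref{Main Thm} gives exactly eight deformation classes, distinguished by the Galois group $G$. For type A, Proposition \ref{Refinement} shows that every such $X$ is an \'etale quotient $A/G$ with $(A,G)$ taken from the displayed list, namely $G\cong C_2\times C_2$ with $T\in\{T_1,T_2,T_3,T_4\}$ or $G\cong D_8$ with $T\in\{T_2,T_3\}$, a total of six combinations, each of which occurs. For a fixed pair $(G,T)$ the only remaining freedom is in the moduli $\tau_i\in\mathbb{H}$ of the elliptic factors, which vary over a connected space; the resulting family of quotients is therefore connected and yields a single deformation class. This shows there are at most six type A classes.

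The remaining task, and the step I expect to carry the real content, is to prove that these six type A families are pairwise non-deformation-equivalent. The group $G$ is recovered from $\pi_1(X)$ as its point group, i.e.\ the quotient by the maximal normal free abelian subgroup of finite index; this separates the block $G\cong C_2\times C_2$ from the block $G\cong D_8$, as does the Picard number $\rho(X)=3$ versus $2$ recorded in Theorem \ref{type A}. Within each block the torsion datum $T$ is rigid under deformation, being encoded in the integral $G$-module $\Lambda:=\pi_1(A)\cong\Z^6$ together with the class of the extension $1\to\Lambda\to\pi_1(X)\to G\to 1$; the hard part is to verify that the four Bieberbach groups $\pi_1(X)$ arising for $G\cong C_2\times C_2$, and the two arising for $G\cong D_8$, are pairwise non-isomorphic. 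I would settle this by an explicit comparison of the $G$-lattices determined by the subgroups $T$, for instance via the torsion of $H_1(X,\Z)\cong\pi_1(X)^{\mathrm{ab}}$ and of the coinvariants $\Lambda_G$, thereby reducing the distinctness to a finite computation. Together with the irreducibility of each family, this identifies the six type A classes with the six pairs $(G,T)$, i.e.\ with the minimal totally splitting coverings, and adding the eight type K classes gives the asserted total of fourteen.
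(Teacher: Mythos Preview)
Your plan matches the paper's: Theorem~\ref{Classification A and K} is stated there with no displayed proof, being asserted as an immediate consequence of Theorem~\ref{Main Thm} for type K and Proposition~\ref{Refinement} for type A. You are considerably more scrupulous on three points the paper leaves implicit. Your Hirsch-length argument separating type A from type K under deformation is correct and is simply not addressed in the paper. Your observation that each fixed pair $(G,T)$ parametrizes a connected family, hence a single deformation class, is also correct and again not spelled out.

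The one place your proposal genuinely diverges is in establishing the pairwise distinctness of the six type A classes. The paper's position is that Proposition~\ref{Refinement} has already put the covering into a normal form, so that the pair $(G,T)$ --- packaged as the Galois group of the \emph{minimal totally splitting covering} --- is the invariant; no further verification is offered. Your route via the isomorphism type of the Bieberbach group $\pi_1(X)$ is a legitimate alternative with the advantage of being manifestly topological, hence automatically deformation-invariant, rather than relying on a normal-form argument whose uniqueness is only implicit in the proof of Proposition~\ref{Refinement}. The two packagings encode the same data: the extension $1\to\Lambda\to\pi_1(X)\to G\to 1$ recovers precisely the $G$-lattice $\Lambda$ whose normal form that proposition computes. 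Your proposed invariants (torsion in $H_1(X,\Z)$ together with the coinvariants $\Lambda_G$) are the right ones; be aware that $H_1$ alone will not separate all four $C_2\times C_2$ cases, so the second invariant is genuinely needed.
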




\par\noindent{\scshape \small
Max Planck Institute for Mathematics\\
Vivatsgasse 7, 53111 Bonn, Germany}
\par\noindent{\ttfamily hashimoto@mpim-bonn.mpg.de}
\break
\par\noindent{\scshape \small
Department of Mathematics, Harvard University\\
1 Oxford Street, Cambridge MA 02138 USA }
\par \noindent{\ttfamily kanazawa@cmsa.fas.harvard.edu}

\end{document}